\renewcommand*{\backref}[1]{}
\renewcommand*{\backrefalt}[4]{%
    \scriptsize%
    {
    \ifcase #1 (\textcolor{red}{Uncited.})%
          \or (Cit.\ on p.~#2)%
          \else (Cit.\ on pp.~#2)%
    \fi%
    }
}
\providecommand*{\twoheadrightarrowfill@}{%
  \arrowfill@\relbar\relbar\twoheadrightarrow
}
\providecommand*{\twoheadleftarrowfill@}{%
  \arrowfill@\twoheadleftarrow\relbar\relbar
}
\providecommand*{\xtwoheadrightarrow}[2][]{%
  \ext@arrow 0579\twoheadrightarrowfill@{#1}{#2}%
}
\providecommand*{\xtwoheadleftarrow}[2][]{%
  \ext@arrow 5097\twoheadleftarrowfill@{#1}{#2}%
}
\title[Formal De Rham complex]{On the decomposition of the De Rham complex on formal schemes}
\author[L. Alonso]{Leovigildo Alonso Tarr\'{\i}o}
\address[L. A. T.]{Instituto de Matem\'aticas\\ 
Departamento de  Matem\'a\-ticas\\
Universidade de Santiago de Compostela\\
E-15782  Santiago de Compostela, Spain}
\email{leo.alonso@usc.es}
\author[A. Jerem\'{\i}as]{Ana Jerem\'{\i}as L\'opez}
\address[A. J. L.]{Instituto de Matem\'aticas\\ 
Departamento de  Matem\'a\-ticas\\
Universidade de Santiago de Compostela\\
E-15782  Santiago de Compostela, Spain}
\email{ana.jeremias@usc.es}
\author[M. P\'erez]{Marta P\'erez Rodr\'{\i}guez}
\address[M. P. R.]{Departamento de Matem\'a\-ticas\\
Facultade de Ciencias da Educaci\'on e do Deporte,
Campus A Xunqueira\\
Universidade de Vigo\\
E-36005 Pontevedra, Spain}
\email{martapr@uvigo.es}
\thanks{This paper has been partially supported by  
Spain's MEIC and E.U.'s FEDER research projects MTM2014-59456
and MTM2017-89830-P together with Xunta de Galicia's ED431C 2019/10 with FEDER funds.}
\subjclass[2010]{14F40 (primary); 14F05, 14B20 (secondary)}
\date{October 31, 2019, \emph{typeset}: \today}
\theoremstyle{plain}
\newtheorem{thm}{Theorem}[section]
\newtheorem{lem}[thm]{Lemma}
\newtheorem{cor}[thm]{Corollary}
\newtheorem{prop}[thm]{Proposition}
\theoremstyle{remark}
\newtheorem*{rem}{Remark}
\theoremstyle{definition}
\newtheorem*{ack}{Acknowledgements}
\newtheorem{cosa}[thm]{}
\numberwithin{equation}{thm}
\newcommand{\CA}{\mathcal{A}}
\newcommand{\CE}{{\mathcal E}}
\newcommand{\CF}{{\mathcal F}}
\newcommand{\CG}{{\mathcal G}}
\newcommand{\CH}{{\mathcal H}}
\newcommand{\CI}{{\mathcal I}}
\newcommand{\CJ}{{\mathcal J}}
\newcommand{\CK}{{\mathcal K}}
\newcommand{\CO}{\mathcal{O}}
\newcommand{\CZ}{\mathcal{Z}}
\newcommand{\FU}{\mathfrak U}
\newcommand{\FV}{\mathfrak V}
\newcommand{\FW}{\mathfrak W}
\newcommand{\FX}{\mathfrak X}
\newcommand{\FY}{\mathfrak Y}
\newcommand{\FZ}{\mathfrak Z}
\newcommand{\sch}{\mathsf {Sch}}
\newcommand{\sfn}{\mathsf {NFS}}
\newcommand{\sfna}{\sfn_{\mathsf {af}}}
\newcommand{\D}{\boldsymbol{\mathsf{D}}}
\newcommand{\R}{\boldsymbol{\mathsf{R}}}
\newcommand{\A}{\mathsf{A}}
\newcommand{\cc}{\mathsf{c}}
\newcommand{\ts}{\mathsf{t}}
\newcommand{\qc}{\mathsf{qc}}
\newcommand{\qct}{\mathsf{qct}}
\newcommand{\tr}{\triangle}
\newcommand{\tc}{\widehat{\otimes}}
\newcommand{\om}{\widehat{\Omega}}
\newcommand{\omi}{\widehat{\Omega}^{i}}
\newcommand{\pei}{\bigwedge^{i}}
\newcommand{\chC}{\Check{\mathcal C}}
\newcommand{\LLambda}{\boldsymbol{\Lambda}}
\newcommand{\hd}{\widehat{d}}
\newcommand{\HPR}{\widehat{\mathbb{P}}}
\newcommand{\PR}{\mathbb{P}}
\newcommand{\AF}{\mathbb{A}}
\newcommand{\NN}{\mathbb{N}}
\newcommand{\ZZ}{\mathbb{Z}}
\newcommand{\kk}{\kappa}
\newcommand{\dirlim}[1]{\begin{array}[t]{c} {\rm lim}\\[-7.5 pt]
 {\longrightarrow} \\[-7.5 pt] {\scriptstyle {#1}} \end{array}}
\newcommand{\invlim}[1]{\begin{array}[t]{c} {\rm lim}\\[-7.5 pt]
 {\longleftarrow} \\[-7.5 pt] {\scriptstyle {#1}} \end{array}}
\newcommand{\lto}{\longrightarrow}
\newcommand{\xto}{\xrightarrow}
\newcommand{\xepi}{\xtwoheadrightarrow}
\newcommand{\inc}{\hookrightarrow}
\newcommand{\liso}{\mathop{\tilde{\lto}}}
\newcommand{\ush}{\textup{\texttt\#}}
\DeclareMathOperator{\Hom}{Hom}
\DeclareMathOperator{\shom}{\CH\mathit{om}}
\DeclareMathOperator{\rshom}{\R\!\shom}
\DeclareMathOperator{\Img}{Im}
\DeclareMathOperator{\Dercont}{Dercont}
\DeclareMathOperator{\ga}{\Gamma}
\DeclareMathOperator{\spec}{Spec}
\DeclareMathOperator{\spf}{Spf}
\DeclareMathOperator{\h}{H}
\DeclareMathOperator{\id}{id}
\DeclareMathOperator{\rg}{rk}
\DeclareMathOperator{\dimtop}{dimtop}
\DeclareMathOperator{\modu}{\text{-}\mathsf{mod}}
\DeclareMathOperator{\res}{res}
\newcommand{\ie}{{\it i.e.\/} }
\newcommand{\cfr}{{\it cf.\/} }
\begin{document}

\begin{abstract} 
We show that if $\FX$ is a pseudo-proper smooth noetherian formal scheme over a positive characteristic $p$ field $k$ then its De Rham complex $\tau^{\leq p}(F_{\FX/k\,*} \om^{\bullet}_{\FX/k})$ is decomposable. Along the way we establish the Cartier isomorphism
$\om^{i}_{\FX^{(p)}/\FY} \overset{\gamma}\to \CH^{i} (F_{\FX/\FY\,*}\om^{\bullet}_{\FX/\FY})$ associated to a map $f\colon \FX \to \FY$ of positive characteristic $p$ noetherian formal schemes where $\FX^{(p)}$ denotes the base change of $\FX$ along the Frobenius morphism of $\FY$ and $F_{\FX/\FY}$ denotes the relative Frobenius of $\FX$ over $\FY$.
\end{abstract}

\maketitle
\tableofcontents

\section*{Introduction}
An important tool for understanding some of the fine properties of algebraic varieties is the use of formal schemes. Over the field of complex numbers, Hartshorne studied the hypercohomology of the De Rham complex of the formal completion of a singular scheme on a non-singular ambient scheme and showed that this gives back singular cohomology by purely algebraic means. 

In this paper we start exploring the properties of De Rham cohomology of formal schemes over a characteristic $p$ field. A motivation is to develop tools to understand the cohomological properties of singular varieties. The main technical issue is to have at hand basic results about the geometry of formal schemes. Let $X$ be a possibly singular variety over a field $k$. Suppose there is a closed embedding $X \inc P$ of $X$ into a smooth $k$-scheme $P$. Its formal completion $P_{/X}$ is not \emph{adic} over $\spec(k)$. This leads us to consider non-adic morphisms of formal schemes. Let $f \colon \FX \to \FY$ be a morphism of formal schemes. As explained in \ref{lim} (ii) there is a system of morphisms of usual schemes $\{f_{\ell}\colon  X_{\ell} \to Y_{\ell}\}_{\ell \in \NN}$ such that
\[f = \dirlim {\ell\in \NN} f_{\ell}.\]
It is a general principle that if $f$ is adic, its properties can be studied through the underlying maps $f_{\ell}$, after all, the squares
\begin{equation*}
\begin{tikzpicture}[baseline=(current  bounding  box.center)]
\matrix(m)[matrix of math nodes, row sep=2.4em, column sep=2.4em,
text height=1.5ex, text depth=0.25ex]{
X_{\ell} & Y_{\ell} \\
\FX   & \FY \\}; 
\path[->,font=\scriptsize,>=angle 90]
(m-1-1) edge node[auto] {$f_{\ell}$} (m-1-2)
        edge node[left] {$i'_\ell$} (m-2-1)
(m-1-2) edge node[auto] {$i_\ell$} (m-2-2)
(m-2-1) edge node[auto] {$f$} (m-2-2);
\end{tikzpicture}
\end{equation*}
can be taken Cartesian. This is \emph{not} the case for non-adic morphisms. Thus, one needs to redevelop most of the usual tools for non adic maps of formal schemes. To give a specific example, if $f$ is a smooth morphism  of locally noetherian formal schemes the morphisms $f_{\ell}$ may not be smooth (see \cite[Example 5.3]{AJP2}), therefore one cannot use a limit argument to reduce the arguments to ordinary schemes.

Here, we study the De Rham complex of a non necessarily adic formal scheme of pseudo finite type over a field of positive characteristic $p$. We show that under the usual condition of $W_2$-liftability the De Rham complex is decomposed up to $p$. The argument does not give the degeneration of the Hodge-De Rham spectral sequence because the finiteness of cohomology is only established under adic hypothesis. 

The strategy of the proof is similar to the classical method by Deligne and Illusie \cite{deill} but all the results of smoothness, deformation and cohomology are needed in the setting of pseudo-finite maps of formal schemes. The basic theory of smoothness of formal schemes is developed in \cite{AJP1} and some more advanced properties in \cite{AJP2}. Both papers are used intensively along the paper. Another important ingredient is the deformation theory of smooth morphisms as exposed in \cite{P}. A full-fledged theory of deformation is developed in \cite{P2}, but this generality is not needed in the present situation.  

It is worth remarking that decomposition up to $p$ uses essentially the results of the aforementioned papers but the extension of the result at the dimension $p$, requires the full machinery of Grothendieck duality for formal schemes \cite{AJL}. Moreover, Sastry's computation of the dualizing sheaf of a pseudo-proper smooth noetherian formal scheme \cite{sastry} is required to reach the general result.

Let us now describe the contents of the paper. An initial section recalls the basic definitions and notations that will be of use throughout the paper. In particular we recall the definition of the module of differentials and the associated De Rham complex. In the next section we discuss the basic properties of the Frobenius morphism both in absolute and relative version. It is noteworthy that the Frobenius morphism is an adic homeomorphism. Moreover we show that it is a finite locally free morphism.

In Section \ref{cartsec} we develop Cartier theory for noetherian formal schemes. Specifically, in Theorem \ref{teorisocar}  we establish an analogous to the Cartier isomorphism in $\sch$ \cite[(7.2)]{k} for relative differential forms associated to a smooth morphism of locally noetherian formal schemes of characteristic $p$. 

Once all this structure is up and running we prove the decomposition theorem. We fix $\FY$ a locally noetherian formal scheme of characteristic $p$ together with $\widetilde{\FY}$, a flat lifting  over $\ZZ/p^{2} \ZZ$. Let $f \colon  \FX \to \FY$ be a smooth morphism of locally noetherian formal schemes, let us consider its relative Frobenius mophism denoted by $F_{\FX/\FY} \colon \FX \to \FX^{(p)}$. It holds that any smooth lifting $\widetilde{\FX^{(p)}}$ of $\FX^{(p)}$ over $\widetilde{\FY}$ yields a  a decomposition of the complex $\tau^{<p}(F_{\FX/\FY\,*} \om^{\bullet}_{\FX/\FY})$ in $\D(\FX^{(p)})$. Moreover, much as in the case of schemes, the existence of a smooth lifting is equivalent to the existence of a decomposition of $\tau^{<p}(F_{\FX/\FY\,*} \om^{\bullet}_{\FX/\FY})$. The proof relies on the theory of (non necessarily adic) smooth morphisms of formal schemes, its basic deformation theory and the lifting of Frobenius morphisms. Of course, a global  lifting of Frobenius is not guaranteed to exist, but only local liftings. The corresponding local decompositions are glued by a procedure similar to the one employed in \cite{deill}.

Finally, in Section \ref{secatp} we extend this result to degree $p$. For $k$ a perfect field of characteristic $p$ and $\FX$ a formal scheme of topological dimension less or equal than $p$, we show that $F_{\FX/k\,*} \om^{\bullet}_{\FX/k}$ is decomposable. This is Theorem \ref{atp}. Its proof requires establishing a pairing on differential forms
\[
F_{\FX/k\,*}\om^{\, i}_{\FX/k} \otimes_{\CO_{\FX^{(p)}}} F_{\FX/k\,*}\om^{n-i}_{\FX/k} \lto \omega_{\FX^{(p)}/k}
\]
where $\omega_{\FX^{(p)}/k} = \om^{n}_{\FX^{(p)}/k}$, that is dualizing for coherent coefficients by Sastry's result \cite[Theorem 5.1.2]{sastry}. On formal schemes there are basically two dualities, one that refers to \emph{torsion} coefficients and another one for \emph{complete} coefficients ---this last one including the familiar coherent complexes. There is a balance between them controlled by Greenlees-May duality. It is this balance that provides an explicit description of the trace map as a Cartier operator, thereby allowing to extend Deligne-Illusie's idea to the present context.

In future work we will intend to apply the Decomposition Theorem to obtain vanishing theorems for formal schemes with an eye towards the cohomology of singular varieties. The main difficulty in this context is the lack of general finiteness properties. We expect to extend the available results in characteristic 0 to some situations in positive characteristic. With this in hand, the degeneration of the Hodge-De Rham spectral sequence would provide a path towards the desired results.

\begin{ack}
 We thank useful conversation and pointers to the literature to J. Lipman and K. Schwede. We give special thanks to P. Sastry for his interest on our work and the useful suggestions he has given us to improve this paper, especially the last section.
\end{ack}

\section{Preliminaries} \label{sec1}

We denote by $\sfn$ the category of  locally noetherian formal schemes and by $\sfna$ the subcategory of locally noetherian affine formal schemes. We follow the conventions and notations in \cite[\S 10]{EGA1}. Except otherwise indicated, every formal scheme will be in $\sfn$ and we will assume that every ring is noetherian. We write $\sch$ for the category of ordinary schemes.

\begin{cosa}\label{basic}
Given $\FX \in \sfn$ we denote by $\A(\FX)$ the category of $\CO_{\FX}$-Modules and $\D(\FX)$  its corresponding derived category. We denote by $\A_{\cc}(\FX) \subset \A(\FX)$ the full subcategory of coherent $\CO_{\FX}$-Modules\footnote{We honor the capitalization conventions in EGA and write ``Ideal'' and ``Module'' for sheaves of ideals and modules respectively.} and by $\D_{\cc}(\FX)$ the full subcategory of $\D(\FX)$ of complexes whose homology sheaves lie in $\A_{\cc}(\FX)$.

Given $f \colon \FX \to \FY$ a map of formal schemes, $f^{\sharp}\colon \CO_{\FY} \to f_* \CO_{\FX}$ will denote the corresponding morphism of structure sheaves and, with a slight abuse of notation, the ring homomorphisms it induces on sections and stalks.
\end{cosa}

\begin{cosa}  \label{lim}
Let us establish the following convenient notation (\cfr{}\cite[\S 10.6]{EGA1}):
\begin{enumerate}
\item
Given $\FX \in \sfn$ and  $\CJ \subset \CO_{\FX}$ an Ideal of definition,  for each $\ell\in \NN$ we put $X_{\ell}:=(\FX,\CO_{\FX}/\CJ^{\ell+1})$. In the category of formal schemes
\[\FX=\dirlim {\ell \in \NN} X_{\ell},\]
and all the spaces $X_{\ell}$  and $\FX$   have the same underlying topological space.

\item If $f\colon \FX \to \FY$ is a morphism in $\sfn$, given an Ideal of definition $\CK \subset \CO_{\FY}$ there exists an Ideal of definition $\CJ \subset \CO_{\FX}$ such that $f^*(\CK)\CO_{\FX} \subset \CJ $. For any such a pair of ideals setting $X_{\ell}:=(\FX,\CO_{\FX}/\CJ^{\ell+1})$ and $Y_{\ell}:=(\FY,\CO_{\FY}/\CK^{n+1})$ and $f_{\ell}\colon  X_{\ell} \to Y_{\ell}$ the scheme morphism induced by $f$ for each $\ell \in \NN$,  $f$ can be expressed as 
\[f = \dirlim {\ell \in \NN} f_{\ell}.\]
\end{enumerate}
\end{cosa}

\begin{cosa}
As in \cite[Definitions 1.6 and 1.7]{AJP2}, given $\FX \in \sfn$, the  \emph{topological dimension  of   $\FX$} is
$
\dimtop(\FX) := \dim (X_{0})
$
and the \emph{algebraic dimension of  $\FX$} is 
$
\dim(\FX) := \sup_{x \in \FX} \dim\CO_{\FX,x}
$. 
Obviously, 
\[ \dimtop (\FX)\le \dim (\FX).\] 

\end{cosa}

\begin{cosa}
Let us recall some definitions from  \cite[10.13.3]{EGA1}, \cite[(4.8.2)]{EGA31}, \cite[p.7]{AJL}, \cite[\S2 and \S3]{AJP1}. A morphism $f \colon  \FX \to \FY$ in $\sfn$ is of \emph{pseudo finite type (pseudo finite, pseudo proper, separated)} if $f_0$ (equivalently any $f_{\ell}$) is of finite type (finite, proper, separated, respectively). Moreover, we say that $f$ is of \emph{finite type (finite, proper)}  if $f$ is adic and of pseudo finite type (pseudo finite, pseudo proper, respectively).

The morphism $f$ is \emph{smooth  (unramified, \'etale)} if it is of pseudo finite type and satisfies the following lifting condition:
\emph{for any affine $\FY$-scheme $Z$ and for each closed subscheme $T\inc Z$ given by a square zero Ideal $\CI \subset \CO_{Z}$ the induced map
\begin{equation} 
\Hom_{\FY}(Z,\FX) \lto \Hom_{\FY}(T,\FX)
\end{equation}
is surjective (injective, bijective, respectively).}
\end{cosa}

\begin{cosa}

Given $f \colon \FX \to \FY$ a morphism in $\sfn$, for  all open sets $\FU = \spf(A) \subset \FX$ and $\FV=\spf(B) \subset \FY$ such that $f(\FU) \subset \FV$ the  \emph{differential pair of  $\FX$ over $\FY$}, $(\om^{1}_{\FX/\FY}, \hd_{\FX/\FY})$, is locally  given  by
\( ((\om^{1}_{A/B})^\tr,\hd_{A/B}^{\, \tr})
\) where $\tr$ \cite[(10.10.1)]{EGA1} is the
additive covariant  functor 
\begin{equation} \label{triangulito}
\begin{array}{ccc}
(-)^{\tr} \colon A\modu& \lto & \A(\spf(A))\\
M					&  \rightsquigarrow    &  M^{\tr}\\
\end{array}
\end{equation}
The  $\CO_{\FX}$-Module $\om^{1}_{\FX/\FY}$ is called the \emph{Module of  $1$-differentials of  $\FX$ over $\FY$} and the continuous $\FY$-derivation $\hd_{\FX/\FY}\colon \CO_{\FX} \to \om^{1}_{\FX/\FY}$ is called the \emph{canonical derivation of  $\FX$ over $\FY$}.

If we express as in \ref{lim} 
\[
f \colon  \FX \to \FY  \quad = \quad
 \dirlim {\ell\in \NN} (f_{\ell} \colon X_{\ell} \to Y_{\ell}),
\]
we have the following identification
\[
\CO_{\FX} \xto{\hd_{\FX/\FY}} \om^{1}_{\FX/\FY} \quad = \quad
\invlim {\ell \in \NN} (\CO_{X_{\ell}} \xto{d_{X_\ell/Y_\ell}}
{\Omega^{1}_{X_\ell/Y_\ell}}).
\]

From now on and whenever is clear, we will abbreviate $\hd=\hd_{\FX/\FY}$. 
\end{cosa}

\begin{cosa} \label{defidif}
For all $i \in \ZZ$, the \emph{sheaf of $i$-differentials of $\FX$ over $\FY$} is the sheaf $\omi_{\FX/\FY} := \pei \om^{1}_{\FX/\FY}$. Given open subsets $\FU=\spf(A) \subset \FX$ and $\FV=\spf(B) \subset \FY$ with $f(\FU) \subset \FV$, $\omi_{\FX/\FY}$ is locally  given  by
\( (\pei \om^{1}_{A/B})^{\tr} \) as a sheaf on $\FU \subset \FX$. 

Notice that $\om^{0}_{\FX/\FY}= \CO_{\FX}$ and $\omi_{\FX/\FY}= 0$, for all $i<0$.

 \label{lemderhamcoherente}
If $f$ is of pseudo finite type, then\footnote{If $f \colon  X \to Y$ is a morphism in $\sch$, then $\Omega^{i}_{X/Y}$ is a quasi-coherent  $\CO_{X}$-module. However, in the context of of formal schemes, to have a satisfactory description of the sheaf of $i$-differentials  we will restrict ourselves  to the class of morphisms of pseudo finite type.}
for all $i$, $\omi_{\FX/\FY} \in \A_{\cc}(\FX)$ (see  \cite[Proposition 2.6.1]{LNS} keeping in mind \cite[(10.10.2.9)]{EGA1}).
\end{cosa}

From now on $f$ will be a morphism of pseudo finite type.

\begin{cosa} \label{cbasepei}
We denote by  $\om^{\bullet}_{\FX/\FY}$ the sheaf of graded abelian groups that to an open subset $\FU \subset \FX$ associates the module
 \[
 \FU \leadsto \ga(\FU, \om^{\bullet}_{\FX/\FY}):= \bigoplus_{q \in \NN} \ga(\FU,\om^{q}_{\FX/\FY}) 
 \]
The sheaf $\om^{\bullet}_{\FX/\FY}$ is a supercommutative $\CO_{\FX}$-Algebra (\ie{}graded and alternating in the terminology of  \cite[Ch. III, \S7.1, Definition 1 and \S7.3, Proposition 5]{boual}).

For a  commutative diagram of morphisms in $\sfn$,
\begin{equation}\label{cbasepeidiag}
\begin{tikzpicture}[baseline=(current  bounding  box.center)]
\matrix(m)[matrix of math nodes, row sep=2.4em, column sep=2.4em,
text height=1.5ex, text depth=0.25ex]{
\FX'  & \FX \\
\FY'  & \FY \\}; 
\path[->,font=\scriptsize,>=angle 90]
(m-1-1) edge node[auto] {$g$} (m-1-2)
        edge node[left] {$f'$} (m-2-1)
(m-1-2) edge node[auto] {$f$} (m-2-2)
(m-2-1) edge node[auto] {$h$} (m-2-2);
\end{tikzpicture}
\end{equation}
such that $f$ and $f'$ are of pseudo finite type, there exists a morphism of graded $\CO_{\FX'}$-Algebras 
\begin{equation}\label{cbasepeimap}
 g^{*}\om^{\, \bullet}_{\FX/\FY} \lto \om^{\, \bullet}_{\FX'/\FY'}
\end{equation}
determined locallly in degree $i$ by  
\[
(\hd  a_{1} \wedge\hd  a_{2} \wedge \ldots \wedge \hd a_{i}) \otimes 1 \rightsquigarrow \hd'  g^{\sharp}(a_{1}) \wedge \hd' g^{\sharp}(a_{2}) \wedge \ldots \wedge \hd' g^{\sharp}(a_{i}) 
\] 
for any $a_{1},\, a_{2}, \ldots, a_{i} \in \ga(\FU,\CO_{\FX})$ with $\FU \subset \FX$ an affine open set (\cite[Proposition 3.7]{AJP1} and  \cite[Ch. III, \S7.1, Proposition 1]{boual}).
Moreover, if the diagram (\ref{cbasepeidiag}) is cartesian, the morphism (\ref{cbasepeimap}) is an isomorphism.
\end{cosa}

\begin{cosa}\label{existyunicomderham}
Analogously to the case of schemes (see \cite[(16.6.2)]{EGA44}), there exists an unique graded morphism of degree $1$ 
\[
\hd^{\, \bullet} \colon \om^{\bullet}_{\FX/\FY} \lto \om^{\bullet}_{\FX/\FY}
\]
such that:
\begin{enumerate}
\item
$\hd^{\, 0}= \hd$,
\item
$\hd^{\, i+1} \circ \hd^{\, i}=0$, for all $i \in \mathbb{N}$ and
\item
given  $\FU \subset \FX$ an open set,  $w_{i} \in  \ga(\FU, \om^{i}_{\FX/\FY})$ and $w_{j} \in  \ga(\FU, \om^{j}_{\FX/\FY})$, 
\[
\hd^{\, i+j}( w_{i}  \wedge w_{j})= \hd^{\, i}( w_{i}  )\wedge w_{j} + (-1)^{i}  w_{i} \wedge \hd^{\, j}(w_{j})
\]
for any $i,\, j \in \mathbb{N}$.
\end{enumerate}
Then
\[
(\om^{\bullet}_{\FX/\FY}, \hd^{\, \bullet})\colon  0 \to \CO_{\FX} \xto{\hd} \om^{1}_{\FX/\FY} \xto{\hd^{\, 1}} \cdots \xto{\hd^{\, k-1}} \om^{k}_{\FX/\FY} \xto{\hd^{\, k}} \cdots
\]
is a complex of coherent $\CO_{\FX}$-Modules; it is called \emph{De Rham complex of $\FX$ relative to $\FY$}. We abbreviate it by  $\om^{\bullet}_{\FX/\FY}$. Notice that the differentials are $f^{-1}\CO_{\FY}$-linear but not $\CO_{\FX}$-linear.

Observe that if  $f \colon  X \to Y$ is a finite type morphism of usual schemes then $\om^{\bullet}_{X/Y}=\Omega^{\bullet}_{X/Y}$.

 \label{cambiobasederham}
In the setting of the  commutative diagram (\ref{cbasepeidiag}),
the  morphism of graded  $\CO_{\FX}$-Algebras $\om^{\bullet}_{\FX/\FY} \to g_{*} \om^{\bullet}_{\FX'/\FY'}$ adjoint to (\ref{cbasepeimap}) respects the differential, \ie{}it is a map of complexes. 

\end{cosa}

\begin{cosa}  \label{prodextlocallibr}
Suppose that $f\colon \FX \to \FY$ is smooth and such that, for all $x \in \FX$, $\dim_{x} f := \dim f^{-1}(f(x))= n$ \cite[Definition 1.14]{AJP2}. Then $\om^{1}_{\FX/\FY}$ is  a locally free $\CO_{\FX}$-Module of rank $n$ (see  \cite[Proposition 2.6.1]{LNS} and \cite[Corollary 5.10]{AJP2}) and therefore
 $\omi_{\FX/\FY}$ is  a locally free $\CO_{\FX}$-Module of constant rank
$ \binom {n}{i},\, \textrm{ for all }0 \le i \le n$.
In particular, $\om^{n}_{\FX/\FY}$ is an invertible $\CO_{\FX}$-Module and $\omi_{\FX/\FY} =0$, for all $i > n$.
Therefore $\om^{\bullet}_{\FX/\FY}$ is a bounded complex  of amplitude $[0,n]$ of locally free $\CO_{\FX}$-Modules.
\end{cosa}

\begin{rem}
 Let $f \colon X \to \spec (\mathbb{C})$ be a smooth morphism of usual schemes, $Z\subset X$ a closed subscheme and denote by  $\widehat{X}$ the completion of  $X$ along $Z$.  The De Rham  complex of $\widehat{X}$ relative to $ \mathbb{C}$ defined above, $\om^{\bullet}_{\widehat{X}/\mathbb{C}}$,  agrees with the one given by  Hartshorne in \cite[I, \S 7]{ha2}.
\end{rem}

\section{Frobenius morphism on formal schemes} \label{sec2}

Henceforth, $p$ will denote a prime number and $\mathbb{F}_{p} := \mathbb{Z}/ p \mathbb{Z}$ the prime field.

\begin{cosa} \label{principiofrob}
A locally noetherian formal scheme $\FX$  is of  \emph{characteristic $p$} if the canonical morphism $\FX \to \spec(\mathbb{Z})$ factors through $\spec(\mathbb{F}_{p})$, that is, if $p \cdot \CO_{\FX}=0$. Equivalently, given an ideal of definition $\CJ \subset \CO_{\FX}$, the schemes $X_{\ell}=(\FX,\CO_{\FX}/\CJ^{\ell+1})$ are of characteristic $p$, for all $\ell \in \NN$.
\end{cosa}

\begin{cosa} \label{defnfrobabs}
Let $\FX$ be a locally noetherian formal scheme of characteristic $p$. The \emph{absolute Frobenius endomorphism of  $\FX$}, is the endomorphism $F_{\FX}\colon  \FX \to \FX$ that is the identity as a map of topological spaces and, given for all open set $\FU \subset \FX$ by 
\[
\begin{array}{ccc}
\ga(\FU,\CO_{\FX}) &\xto{\ga(\FU, F_{\FX})}& \ga(\FU,\CO_{\FX})\\ 
a&\leadsto & a^{p}
\end{array}
\]
The following holds:
\begin{enumerate}
\item \label{frobabsadico}
The morphism $F_\FX$ is adic. Indeed, for a noetherian adic ring $A$ \cite[(10.4.6)]{EGA1}, $J \subset A$ an ideal of definition, and $F_A \colon A \to A$ its Frobenius endomorphism, the ideal $J^{e}=\langle F_{A}(J) \rangle$ defines the $J$-adic topology in $A$.
\item
\label{absfroblimabsfro}
Given an Ideal of definition $\CJ \subset \CO_{\FX}$ if $F_{X_{\ell}}\colon  X_{\ell} \to X_{\ell}$ is the absolute Frobenius endomorphism of $X_{\ell}$, for all $\ell \in \NN$, then 
\[
F_{\FX} = \dirlim {\ell \in \NN} F_{X_{\ell}}.
\] 
\item \label{frobabshomeo}
$F_\FX$ is  a \emph{universal homeomorphism}, that is, a homeomorphism such that for each morphism of locally noetherian formal schemes $\FZ \to \FX$, the morphism obtained by  base-change $\FX \times \FZ  \to \FZ$ is a homeomorphism. Indeed, with the previous notation,
as $F_{X_{\ell}}$ is a universal  homeomorphism (see \cite[Expos\'e XV, \S1]{sga5}) in view of \cite[(10.7.4)]{EGA1}  we deduce that $F_{\FX}$ is too, because $(F_{\FX})_{\textrm{top}} = (F_{X_{\ell}})_{\textrm{top}}$.
\end{enumerate}
\end{cosa}

\begin{cosa} \label{defmorfrobrel}
For $f \colon  \FX \to \FY$ in $\sfn$ with $\FY$   of characteristic $p$, we have the following commutative diagram
\begin{equation*}
\begin{tikzpicture}[baseline=(current  bounding  box.center)]
\matrix(m)[matrix of math nodes, row sep=2.4em, column sep=2.4em,
text height=1.5ex, text depth=0.25ex]{
\FX  & \FX \\
\FY  & \FY \\}; 
\path[->,font=\scriptsize,>=angle 90]
(m-1-1) edge node[auto] {$F_{\FX}$} (m-1-2)
        edge node[left] {$f$} (m-2-1)
(m-1-2) edge node[auto] {$f$} (m-2-2)
(m-2-1) edge node[auto] {$F_{\FY}$} (m-2-2);
\end{tikzpicture}
\end{equation*}
where the horizontal arrows are the absolute Frobenius endomorphisms  of  $\FX$ and $\FY$.
Let us  put  $\FX^{(p)} := \FX \times_{F_{\FY}} \FY$. Notice the dependence of the formal scheme $\FX^{(p)}$ on the base $\FY$. We omit it on the notation for clarity. There exists an unique morphism 
\[
F_{\FX/\FY} \colon \FX \lto \FX^{(p)}
\]
 that makes commutative the diagram
\begin{equation}\label{diagrfrob}
\begin{tikzpicture}[baseline=(current  bounding  box.center)]
\matrix(m)[matrix of math nodes, row sep=2.8em, column sep=3.2em,
text height=1.5ex, text depth=0.25ex]{
\FX &       &     \\
    & \FX^{(p)}  & \FX \\
    & \FY   & \FY \\}; 
\path[->,font=\scriptsize,>=angle 90]
(m-1-1) edge node[right] {$F_{\FX/\FY}$} (m-2-2)
        edge[bend left=20] node[above] {$F_{\FX}$} (m-2-3)
        edge[bend right=20] node[left] {$f$} (m-3-2)
(m-2-2) edge node[above] {$(F_{\FY})_{\FX}$} (m-2-3)
        edge node[left] {$f^{(p)}$} (m-3-2)
(m-2-3) edge node[auto] {$f$} (m-3-3)
(m-3-2) edge node[auto] {$F_{\FY}$} (m-3-3);
\end{tikzpicture}
\end{equation}
The morphism $F_{\FX/\FY}$  is called  \emph{relative Frobenius  morphism of  $\FX$ over $\FY$}.

Given Ideals of definition $\CJ \subset \CO_{\FX}$ and $\CK \subset \CO_{\FY}$ such that $f^{*}(\CK) \CO_{\FX} \subset \CJ$, if $F_{X_{\ell}/Y_{\ell}}\colon  X_{\ell} \to X^{(p)}_{\ell}$ is the relative Frobenius morphism from $X_{\ell}$ to $Y_{\ell}$, by \ref{defnfrobabs}.(\ref{absfroblimabsfro}) and \cite[(10.7.4)]{EGA1} we have that 
\[
\begin{array}{ccc}
\begin{tikzpicture}[baseline=(current  bounding  box.center)]
\matrix(m)[matrix of math nodes, row sep=2.8em, column sep=3.2em,
text height=1.5ex, text depth=0.25ex]{
\FX &       &     \\
    & \FX^{(p)}  & \FX \\
    & \FY   & \FY \\}; 
\path[->,font=\scriptsize,>=angle 90]
(m-1-1) edge node[right] {$F_{\FX/\FY}$} (m-2-2)
        edge[bend left=20] node[above] {$F_{\FX}$} (m-2-3)
        edge[bend right=20] node[left] {$f$} (m-3-2)
(m-2-2) edge node[above] {$(F_{\FY})_{\FX}$} (m-2-3)
        edge node[left] {$f^{(p)}$} (m-3-2)
(m-2-3) edge node[auto] {$f$} (m-3-3)
(m-3-2) edge node[auto] {$F_{\FY}$} (m-3-3);
\end{tikzpicture}
&
= \dirlim {n \in \NN} 
& 
  \left(
\begin{tikzpicture}[baseline=(current  bounding  box.center)]
\matrix(m)[matrix of math nodes, row sep=2.8em, column sep=3.2em,
text height=1.5ex, text depth=0.25ex]{
X_{\ell} &         &     \\
      & X^{(p)}_{\ell}  & X_{\ell} \\
      & Y_{\ell}   & Y_{\ell} \\}; 
\path[->,font=\scriptsize,>=angle 90]
(m-1-1) edge node[right] {$F_{X_{\ell}/Y_{\ell}}$} (m-2-2)
        edge[bend left] node[above] {$F_{X_\ell}$} (m-2-3)
        edge[bend right] node[left] {$f_{\ell}$} (m-3-2)
(m-2-2) edge node[above] {$(F_{Y_{\ell}})_{X_{\ell}}$} (m-2-3)
        edge node[left] {$f^{(p)}_\ell$} (m-3-2)
(m-2-3) edge node[auto] {$f_{\ell}$} (m-3-3)
(m-3-2) edge node[auto] {$F_{Y_\ell}$} (m-3-3);
\end{tikzpicture}
  \right)
\end{array}
\]
and, in particular, $F_{\FX/\FY} = \dirlim {\ell \in \NN} F_{X_{\ell}/Y_{\ell}}$.
 
\end{cosa}

\begin{cosa} \label{frobrelafin}
Let $\varphi \colon A \to B$ be a homomorphism of noetherian adic rings of characteristic $p$; let $\FX= \spf(A)$, $\FY= \spf(B)$ and $f  \colon  \FX \to \FY$  such that $f := \spf({\varphi})$ is in $\sfna$.
The diagram (\ref{diagrfrob}) corresponds through the equivalence of categories to the following diagram 
\begin{equation*}
\begin{tikzpicture}[baseline=(current  bounding  box.center)]
\matrix(m)[matrix of math nodes, row sep=3.2em, column sep=3.6em,
text height=1.5ex, text depth=0.25ex]{
B & B                &  \\
A & A \tc_{F_{B}} B  &  \\
    &                & A \\}; 
\path[->,font=\scriptsize,>=angle 90]
(m-1-1) edge node[above] {$F_{B}$} (m-1-2)
        edge node[left]  {$\varphi$} (m-2-1)
(m-1-2) edge node[right] {$\varphi'$} (m-2-2)
        edge[bend left=20]  node[right] {$\varphi$} (m-3-3)
(m-2-1) edge node[above] {$(F_{B})_{\!A}$} (m-2-2)
        edge[bend right=20] node[left] {$F_{A}\quad$} (m-3-3)
(m-2-2) edge node[above] {$\quad F_{A/B}$} (m-3-3);
\end{tikzpicture}
\end{equation*}
where  $F_A$ are $F_B$ are the usual Frobenius homomorphisms (raise to the $p$-th power), $F_{A/B}(a\tc b) = a^{p} \cdot \varphi(b)$, denoting by $a\tc b \in A \tc_{F_{B}} B$  the image of $a \otimes b \in A \otimes_{F_{B}} B$ and $(F_{B})_{\!A}(a)=a\tc 1$.
\end{cosa}

\begin{prop} \label{propifrobreladicohomeo} 
Given $f \colon  \FX \to \FY$ in $\sfn$ with $\FY$  of characteristic $p$ and $F_{\FX/\FY}$  the relative Frobenius morphism of $\FX$ over $\FY$ it holds that:
\begin{enumerate}
\item
$F_{\FX/\FY}$ is adic.
\item
$F_{\FX/\FY}$ is  a \emph{homeomorphism}. 
\end{enumerate}
\end{prop}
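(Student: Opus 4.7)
The plan is to exploit the factorization $F_{\FX} = (F_{\FY})_{\FX} \circ F_{\FX/\FY}$ coming from diagram (\ref{diagrfrob}) together with the properties of the absolute Frobenius already established in \ref{defnfrobabs}. Both statements then reduce to transferring information from $F_{\FX}$ and $F_{\FY}$ through this factorization.

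For \emph{adicness}, I would first observe that $(F_{\FY})_{\FX}$ is adic because it is obtained by base-change from $F_{\FY}$, which is adic by \ref{defnfrobabs}(\ref{frobabsadico}), and base-changes of adic morphisms are adic. I would then invoke the following general cancellation property: if $g\colon \FY'\to \FZ'$ and $h\colon \FX'\to \FY'$ are morphisms in $\sfn$ with $g$ adic and $g\circ h$ adic, then $h$ is adic. This is essentially immediate from the definition: for any Ideal of definition $\CK\subset \CO_{\FZ'}$, the Ideal $g^{*}(\CK)\CO_{\FY'}$ is of definition (by adicness of $g$), and its pullback through $h$ is $(g h)^{*}(\CK)\CO_{\FX'}$, which is of definition by adicness of $gh$. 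Applied to $g=(F_{\FY})_{\FX}$ and $gh=F_{\FX}$, this yields that $F_{\FX/\FY}$ is adic.

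For the \emph{homeomorphism} property, I would use that $F_{\FY}$ is a universal homeomorphism by \ref{defnfrobabs}(\ref{frobabshomeo}), so its base-change $(F_{\FY})_{\FX}\colon \FX^{(p)}\to \FX$ is a homeomorphism. Since $F_{\FX}$ is also a (universal) homeomorphism and $(F_{\FY})_{\FX}\circ F_{\FX/\FY} = F_{\FX}$, the map $F_{\FX/\FY}$ equals the composition $((F_{\FY})_{\FX})^{-1}\circ F_{\FX}$ at the level of topological spaces, hence is a homeomorphism. Alternatively, one could use the limit description $F_{\FX/\FY} = \dirlim {\ell \in \NN} F_{X_\ell/Y_\ell}$ established in \ref{defmorfrobrel}: all formal schemes involved share the underlying space of their reductions mod an Ideal of definition, so the topological map of $F_{\FX/\FY}$ agrees with that of $F_{X_0/Y_0}$, which is a classical homeomorphism in $\sch$.

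I do not expect any serious obstacle here; the content is purely bookkeeping with the basic properties of adic morphisms and the already-known topological triviality of absolute Frobenius. The only point requiring slight care is the cancellation lemma for adicness, which is however an immediate unwinding of the definitions using that the notions involved are defined by pulling back Ideals of definition.
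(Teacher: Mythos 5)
Your proof is correct and follows essentially the same route as the paper: for adicness you base-change $F_{\FY}$ to get $(F_{\FY})_{\FX}$ adic and then cancel in the factorization $F_{\FX} = (F_{\FY})_{\FX}\circ F_{\FX/\FY}$ (the paper invokes EGA I, (10.12.1) for this cancellation, which you spell out directly), and for the homeomorphism you use that the absolute Frobenius is a universal homeomorphism, exactly as the paper does.
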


\begin{proof}
Let us consider the diagram (\ref{diagrfrob}).
\begin{enumerate}
  \item The morphisms $F_{\FX}$ and $F_{\FY}$  are adic (\ref{defnfrobabs}.(\ref{frobabsadico})). By  base-change (see \cite[1.3]{AJP1}), we have that the morphism $(F_{\FY})_{\FX}$ is adic. Therefore $F_{\FX/\FY}$ also is adic (see \cite[(10.12.1)]{EGA1}).
  \item It follows from  \ref{defnfrobabs}.(\ref{frobabshomeo}).
\qedhere
\end{enumerate}
\end{proof}

\begin{cosa} \label{frobespacafin}
Given $\FY=\spf(B)$ a  noetherian affine formal scheme of characteristic $p$, $n>0$ and  $\pi: \AF^{n}_{\FY}=\spf(B\{\mathbf{T}\}) \to \FY$ the canonical projection of the affine formal space, it holds that:
\begin{enumerate}
\item \label{frobespacafiniso}
There exists an isomorphism of $\FY$-formal schemes
\[
(\AF^{n}_{\FY})^{(p)}= \AF^{n}_{\FY} \times_{F_{\FY}} \FY \xto{\overset{\Phi}\sim}  \AF^{n}_{\FY}\]
defined through the equivalence of categories by  the morphism
\[
\begin{array}{ccc}
B\{\mathbf{T}\} &\xto{\quad\Phi\quad} &B\{\mathbf{T}\}  \tc_{F_{B}} B\\
\sum_{\nu \in \NN^{n}} b_{\nu} \mathbf{T}^{\nu}&\leadsto & \sum_{\nu \in \NN^{n}} \mathbf{T}^{\nu} \tc b_{\nu}
\end{array}
\]
given  by  the universal property of the restricted formal power series ring (\emph{cf.}  \cite[Ch. III, \S 4.2, Proposition 4]{b}). Let us check that $\Phi$ is an isomorphism. If  $B\{\mathbf{T}\} \xto{G}  B\{\mathbf{T}\}$ is the morphism induced by  $F_{B}$, applying the universal property of the complete tensor product there exists an unique morphism $\Psi: B\{\mathbf{T}\} \tc_{F_{B}}B\to B\{\mathbf{T}\}$ such that the following diagram  commutes:
\begin{equation*}
\begin{tikzpicture}[baseline=(current  bounding  box.center)]
\matrix(m)[matrix of math nodes, row sep=3.2em, column sep=3.6em,
text height=1.5ex, text depth=0.25ex]{
B                & B                             &  \\
B\{\mathbf{T}\}	 & B\{\mathbf{T}\}  \tc_{F_{B}} B&  \\
                 &                               & B\{\mathbf{T}\} \\}; 
\path[->,font=\scriptsize,>=angle 90]
(m-1-1) edge node[above] {$F_{B}$} (m-1-2)
        edge node[left]  {$\pi$} (m-2-1)
(m-1-2) edge node[right] {$\pi'$} (m-2-2)
        edge[bend left=20]  node[right] {$\pi$} (m-3-3)
(m-2-1) edge node[above] {$(F_{B})_{B\{\mathbf{T}\}}$} (m-2-2)
        edge[bend right=20] node[left] {$G\quad$} (m-3-3)
(m-2-2) edge node[above] {$\Psi$} (m-3-3);
\end{tikzpicture}\nu
\end{equation*}
Therefore $\Psi(\sum_{\nu \in \NN^{n}} b_{\nu} \mathbf{T}^{\nu} \tc b) = \sum_{\nu \in \NN^{n}} b\cdot b_{\nu}^{p}  \mathbf{T}^{\nu}$ and $\Phi^{-1}= \Psi$.
\item \label{frobseries}
The morphisms $F_{\AF^{n}_{\FY}/\FY}$ and $(F_{\FY})_{\AF^{n}_{\FY}}$ are determined
by:
\[
\begin{array}{ccc}
B\{\mathbf{T}\}&\xto{\quad\sigma\quad}& B\{\mathbf{T}\}\\ 
\sum_{\nu \in \NN^{n}} b_{\nu} \mathbf{T}^{\nu}&\leadsto & \sum_{\nu \in \NN^{n}} b_{\nu} (\mathbf{T}^{\nu}) ^{p}
\end{array}
\]
and
\[ 
\begin{array}{ccc}
B\{\mathbf{T}\}&\xto{\quad\tau \quad} & B\{\mathbf{T}\}\\ 
\sum_{\nu \in \NN^{n}} b_{\nu} \mathbf{T}^{\nu}&\leadsto & \sum_{\nu \in \NN^{n}} b_{\nu}^{p} \mathbf{T}^{\nu}
\end{array}
\]
through the isomorphisms $\Psi$ and $\Phi$, respectively.

\item\label{frobrelproyecfinitflat}
The relative Frobenius  morphism of $\AF_{\FY}^{n}$ over $\FY$,  $F\colon  \AF_{\FY}^{n} \to (\AF_{\FY}^{n})^{(p)}$,  is  finite, flat and $F_{*}(\CO_{\AF^{n}_{\FY}})$ is a locally free $\CO_{(\AF^{n}_{\FY})^{(p)}}$-Algebra of rank $p^{n}$. In fact,  through the morphism $\sigma$, $B\{\mathbf{T}\}$ is a free $B\{\mathbf{T}\}$-module with base $\{\prod_{i=1}^{n}T_{i}^{m_{i}}, 0 \le m_{i} \le p-1\}$. 
\end{enumerate}

\end{cosa}
\vspace{20pt}

If $f \colon  X \to Y$ is an \'etale morphism of locally noetherian schemes  of characteristic $p$, then the relative Frobenius morphism of $X$ over $Y$ is an isomorphism \cite[Expose XV, \S1]{sga5}. Next we generalize this result to the setting of locally noetherian formal schemes.

\begin{lem} \label{lemetfrobresmooth}
Given  a locally noetherian formal scheme $\FY$ of characteristic $p$, let $f \colon  \FX \to \FY$ be an \'etale morphism in $\sfn$. Then the relative Frobenius morphism of $\FX$ over $\FY$, $F_{\FX/\FY}\colon  \FX \to \FX^{(p)}= \FX \times_{F_{\FY}} \FY$, is an isomorphism.
\end{lem}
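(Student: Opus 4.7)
My plan is to reduce the statement to the classical scheme-theoretic result cited in \cite[Expos\'e XV, \S1]{sga5} by using the direct limit presentation of morphisms in $\sfn$ recalled in \ref{lim} and \ref{defmorfrobrel}.

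The first step is to observe that $f$ is adic. Since $f$ is \'etale it is both smooth and unramified, so $\om^{1}_{\FX/\FY}=0$; by the description in \ref{prodextlocallibr}, this forces $\dim_{x} f = 0$ at every point. A smooth morphism in $\sfn$ of pure relative dimension zero is adic, since after the local presentation of smooth morphisms from \cite{AJP1} the only source of non-adicity is the restricted power-series directions contributing to the relative dimension, which here are absent.

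Once we know $f$ is adic, choose an Ideal of definition $\CK \subset \CO_{\FY}$ and put $\CJ = f^{*}(\CK)\CO_{\FX}$; this gives Cartesian squares $X_{\ell} = \FX \times_{\FY} Y_{\ell}$ and a presentation $f = \dirlim {\ell\in\NN} f_{\ell}$ with $f_{\ell}\colon X_{\ell} \to Y_{\ell}$ a morphism of $\mathbb{F}_{p}$-schemes. Because the unique lifting criterion is stable under base change and the $X_{\ell} \hookrightarrow \FX$, $Y_{\ell} \hookrightarrow \FY$ are affine $\FY$-schemes and $\FX$-schemes, each $f_{\ell}$ is \'etale in the usual sense. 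By the diagram at the end of \ref{defmorfrobrel}, we also have $F_{\FX/\FY} = \dirlim {\ell\in\NN} F_{X_{\ell}/Y_{\ell}}$, and the classical fact \cite[Expos\'e XV, \S1]{sga5} says each $F_{X_{\ell}/Y_{\ell}}$ is an isomorphism of schemes. Since Proposition \ref{propifrobreladicohomeo} tells us $F_{\FX/\FY}$ is a homeomorphism on underlying spaces and the $X_{\ell}$ and $X_{\ell}^{(p)}$ all share the common underlying space $|\FX|$, these level-wise isomorphisms of ringed spaces glue to an isomorphism $F_{\FX/\FY}$ in $\sfn$.

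The main obstacle, I expect, is the first step: cleanly arguing that \'etaleness in $\sfn$ forces $f$ to be adic. If that reduction ends up being awkward to justify from the references, a clean alternative is a cancellation argument. Since \'etale is preserved by base change, $f^{(p)}\colon \FX^{(p)} \to \FY$ is \'etale, so $F_{\FX/\FY}$ is a $\FY$-morphism between two \'etale $\FY$-formal schemes. The unique lifting property for $f$ together with that for $f^{(p)}$ then forces the lifting property for $F_{\FX/\FY}$ itself (this is standard diagram-chasing in the formally \'etale setting). Combining this with Proposition \ref{propifrobreladicohomeo}, $F_{\FX/\FY}$ becomes an \'etale homeomorphism, and the classical fact that an \'etale universal homeomorphism is an isomorphism finishes the proof after a local reduction to the affine ring-theoretic setting of \ref{frobrelafin}.
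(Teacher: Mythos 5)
Your primary argument (the first paragraph) does not work, and the failure is exactly the one the paper warns against in the Remark immediately following this Lemma. \'Etale morphisms in $\sfn$ are \emph{not} adic in general, and so neither is every smooth morphism of relative dimension $0$. The basic counterexample is a completion morphism $\kappa\colon X_{/Z}\to X$ for $Z$ a non-open closed subscheme of a noetherian scheme $X$: it is \'etale (one checks the infinitesimal lifting property directly, and $\om^{1}_{X_{/Z}/X}=0$), yet it is plainly non-adic since the target carries the discrete topology while the source does not. The local structure theorem \cite[Proposition 5.9]{AJP2} does not rescue the argument: it factors a smooth $f$ through an \'etale $g$ followed by an affine-space projection, and even when the projection is trivial ($n=0$) there is no reason for $g$ itself to be adic. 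Consequently the Cartesian presentation $f=\dirlim{}f_\ell$ you set up is unavailable, and the attempt to deduce that each $f_\ell$ is \'etale is precisely what \cite[Example 5.3]{AJP2} shows can fail. The whole point of this Lemma is that one must argue intrinsically in $\sfn$ rather than reducing level by level.

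Your fallback argument in the last paragraph is essentially the paper's proof and is the right approach: by base change $f^{(p)}$ is \'etale, and the cancellation property for \'etale maps (\cite[Corollary 2.14]{AJP1}), together with Proposition \ref{propifrobreladicohomeo}, gives that $F_{\FX/\FY}$ is \'etale and adic. What you leave as a gesture (``the classical fact that an \'etale universal homeomorphism is an isomorphism finishes the proof after a local reduction'') is filled in by the paper as follows: $F_{\FX}$ is a universal homeomorphism, hence radical; the sorites for radical morphisms make $F_{\FX/\FY}$ radical; and then \cite[Theorem 7.3]{AJP2} (the formal-scheme version of ``\'etale $+$ radical $\Rightarrow$ open immersion'') applies, after which the homeomorphism property from Proposition \ref{propifrobreladicohomeo} forces an isomorphism. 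If you had led with this route and supplied the radical/open-immersion step, your argument would match the paper's.
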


\begin{proof}
Let us consider the  commutative diagram (\ref{diagrfrob}).
The morphism $f$ is \'etale and by  base-change (see \cite[Proposition 2.9, (ii)]{AJP1}) it follows that $f'$ is \'etale. Then \cite[Corollary 2.14]{AJP1} and Proposition \ref{propifrobreladicohomeo} imply that $F_{\FX/\FY}$ is \'etale adic. On the other hand,  by  \ref{defnfrobabs}.(\ref{frobabshomeo}), $F_{\FX}$ is a universal homeomorphism and, therefore, radical (see \cite[Definition 2.5]{AJP2}). From the sorites of radical morphisms in $\sch$  \cite[Corollaire (3.7.6)]{EGA1} we have that $F_{\FX/\FY}$ is a radical  morphism and applying \cite[Theorem 7.3]{AJP2}   it follows that $F_{\FX/\FY}$ is an open inmersion. Last, by  Proposition \ref{propifrobreladicohomeo} we have that $F_{\FX/\FY}$ is a homeomorphism, so we conclude that it is an isomorphism.
\end{proof}

\begin{rem}
The last result does not follow straightforward from the analogous result in the category of schemes, since given 
\[
f= \dirlim {\ell \in \NN} f_{\ell}
\]
 an \'etale morphism of locally noetherian formal schemes it may happen that the corresponding morphisms of schemes $f_{\ell}$ in the system are not \'etale (see \cite[Example 5.3]{AJP2}).
\end{rem}

In Proposition \ref{frobfinitplan} we generalize  \ref{frobespacafin}.(\ref{frobrelproyecfinitflat}) for smooth morphisms   of locally noetherian formal schemes of constant relative dimension equal to $n$. First, we need a  previous result.

\begin{prop} \label{cambiobasefinito}
Given a cartesian diagram in $\sfn$
\begin{equation*}
\begin{tikzpicture}[baseline=(current  bounding  box.center)]
\matrix(m)[matrix of math nodes, row sep=3em, column sep=3em,
text height=1.5ex, text depth=0.25ex]{
\FX'  & \FY' \\
\FX   & \FY \\}; 
\path[->,font=\scriptsize,>=angle 90]
(m-1-1) edge node[auto] {$f'$} (m-1-2)
        edge node[left] {$g'$} (m-2-1)
(m-1-2) edge node[auto] {$g$} (m-2-2)
(m-2-1) edge node[auto] {$f$} (m-2-2);
\end{tikzpicture}
\end{equation*}
with $f$ finite, if $\CF \in \A_{\cc}(\FX)$ then the canonical morphism of $\CO_{\FY'}$-Modules
\begin{equation} \label{cambiodebaseafin1}
f'_{*}g'^{*}\CF \to g^{*}f_{*}\CF    
\end{equation}
is an isomorphism.
\end{prop}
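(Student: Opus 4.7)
The plan is to reduce the assertion to the affine case and then translate it into an elementary algebraic identity. Since the statement is local on $\FY$ and $\FY'$, we may suppose $\FY = \spf(B)$ and $\FY' = \spf(B')$, with $g$ corresponding to a continuous ring homomorphism $B \to B'$. Because $f$ is finite, it is affine, and we can write $\FX = \spf(A)$ where, by a standard complete Nakayama argument (using that $f$ is adic and $A/JA$ is finite over $B/J$ for $J$ an ideal of definition of $B$), $A$ is in fact a finite $B$-module. By cartesianity and the preservation of finiteness under base change, $\FX' = \spf(A \tc_{B} B')$, and $A \tc_{B} B'$ is finite over $B'$.

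Via the equivalence of categories provided by $(-)^{\tr}$, write $\CF = M^{\tr}$ with $M$ a finitely generated $A$-module. Since $f$ is affine, $f_{*}\CF$ corresponds to $M$ viewed as a $B$-module, and therefore $g^{*}f_{*}\CF$ corresponds to $M \tc_{B} B'$. On the other hand $g'^{*}\CF$ corresponds to $M \tc_{A}(A \tc_{B} B')$, and $f'_{*}g'^{*}\CF$ is the same module regarded as a $B'$-module. Thus, after unravelling the $g^{*} \dashv g_{*}$ adjunction, the canonical map (\ref{cambiodebaseafin1}) translates into the natural homomorphism
\[
M \tc_{A}(A \tc_{B} B') \lto M \tc_{B} B'
\]
induced by the $B$-algebra structure of $A$.

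The decisive observation is that the finiteness of $A$ over $B$ collapses every completed tensor product in sight into an ordinary one. Indeed, $A \otimes_{B} B'$ is a finitely generated module over the noetherian adic complete ring $B'$, hence already complete, so $A \tc_{B} B' = A \otimes_{B} B'$. Similarly, $M$ is finitely generated over $A$ and hence over $B$, so $M \tc_{A}(A \otimes_{B} B') = M \otimes_{A}(A \otimes_{B} B')$ and $M \tc_{B} B' = M \otimes_{B} B'$. The classical associativity isomorphism $M \otimes_{A}(A \otimes_{B} B') \cong M \otimes_{B} B'$ then provides the desired identification.

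I do not anticipate any serious obstacle in this proof: the delicate point is only the identification of the completed tensor products with ordinary ones, which is a routine consequence of the finiteness of $A$ over $B$ together with the completeness of noetherian adic rings acting on finitely generated modules. The remainder is a bookkeeping check that the canonical map defined via adjunction coincides with the natural associativity isomorphism under the functor $(-)^{\tr}$.
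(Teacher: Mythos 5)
Your proposal is correct and follows essentially the same route as the paper: reduce to the affine case, pass through the $(-)^{\tr}$ equivalence, and observe that finiteness of $A$ over $B$ collapses all completed tensor products to ordinary ones, so the map becomes the associativity isomorphism $M\otimes_A(A\otimes_B B')\cong M\otimes_B B'$. The only cosmetic difference is that the paper first invokes the finiteness theorem for finite morphisms to record coherence of both sides before localizing, whereas you read coherence off directly from the finite generation of the modules involved.
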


\begin{proof}
By base-change  we have that $f'$  is also a  finite morphism (see \cite[Proposition 7.1]{AJL}). Then by the Finiteness Theorem  for finite  morphisms  in $\sfn$ \cite[(4.8.6)]{EGA31} it follows that $f'_{*}(g'^{*}\CF)$ and  $g^{*}(f_{*}\CF) $ are coherent $\CO_{\FY'}$-Modules.
Since this is a local question on the base, we may suppose that $g \colon  \FY'=\spf(B')\to \FY=\spf(B)$ is affine, and that $g'\colon  \FX' =\spf(A')\to \FX=\spf(A)$ is a morphism of affine formal schemes  with  $A$ a $B$-module of finite type and $A'= B'\otimes_{B} A$ a $B'$-module of finite type.
Applying the category equivalence given by  the  functor $(-)^\tr$ (see \cite[(10.10.5)]{EGA1}) we get that there exists a  finitely generated $A$-module  $M$ such that $\CF=M^{\tr}$, with $M$ a $B$-module of finite type. The morphism (\ref{cambiodebaseafin1}) corresponds to the canonical isomorphism  of finitely generated $B'$-modules $A' \otimes_{A} M  \to B' \otimes_{B} M$.
\end{proof}

\begin{prop} \label{frobfinitplan}
Given  a locally noetherian formal scheme $\FY$ of characteristic  $p$, let $f \colon  \FX \to \FY$ be a smooth morphism  of relative dimension $n$. Then the relative Frobenius endomorphism  of $\FX$ over $\FY$, $F_{\FX/\FY}\colon \FX \to \FX^{(p)}$, is finite, flat and $F_{\FX/\FY\,*} \CO_{\FX}$ is a locally free $\CO_{\FX^{(p)}}$-Algebra of rank $p^{n}$.
\end{prop}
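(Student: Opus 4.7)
The plan is to reduce to the case of affine space, where the result has already been verified in \ref{frobespacafin}.(\ref{frobrelproyecfinitflat}), and then propagate via the étale case of Lemma \ref{lemetfrobresmooth} together with flat base change for coherent Modules (Proposition \ref{cambiobasefinito}).

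First I would observe that all the assertions (finiteness, flatness, local freeness of rank $p^n$ for $F_{\FX/\FY\,*}\CO_{\FX}$) are local on $\FX^{(p)}$, and by Proposition \ref{propifrobreladicohomeo} the morphism $F_{\FX/\FY}$ is an adic homeomorphism, so localizing on $\FX^{(p)}$ is equivalent to localizing on $\FX$. Since $f$ is smooth of relative dimension $n$, the local structure theorem for smooth morphisms of formal schemes (from \cite{AJP1}) yields, around any point of $\FX$, a factorization $\FX \xrightarrow{g} \AF^n_\FY \xrightarrow{\pi} \FY$ in which $g$ is étale and $\pi$ is the canonical projection. I may therefore assume such a factorization is global.

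Second I would set up the transitivity diagram for the relative Frobenius with respect to the decomposition $f = \pi \circ g$. Writing $\FX^{(p/\AF^n_\FY)} = \FX \times_{\AF^n_\FY,\, F_{\AF^n_\FY}} \AF^n_\FY$, the universal property gives a commutative diagram
\begin{equation*}
\begin{tikzpicture}[baseline=(current  bounding  box.center)]
\matrix(m)[matrix of math nodes, row sep=2.4em, column sep=3em,
text height=1.5ex, text depth=0.25ex]{
\FX & \FX^{(p/\AF^n_\FY)} & \FX^{(p)} \\
    & \AF^n_\FY           & (\AF^n_\FY)^{(p)} \\};
\path[->,font=\scriptsize,>=angle 90]
(m-1-1) edge node[above] {$F_{\FX/\AF^n_\FY}$} (m-1-2)
(m-1-2) edge node[above] {$\bar g$} (m-1-3)
        edge node[left]  {} (m-2-2)
(m-1-3) edge node[right] {} (m-2-3)
(m-2-2) edge node[below] {$F_{\AF^n_\FY/\FY}$} (m-2-3);
\end{tikzpicture}
\end{equation*}
in which the right-hand square is cartesian and the composition $\bar g \circ F_{\FX/\AF^n_\FY}$ equals $F_{\FX/\FY}$. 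By Lemma \ref{lemetfrobresmooth}, applied to the étale morphism $g$, the left-hand arrow $F_{\FX/\AF^n_\FY}$ is an isomorphism; hence $F_{\FX/\FY}$ is identified (up to that isomorphism) with the base change $\bar g$ of $F_{\AF^n_\FY/\FY}$ along the map $\FX^{(p)} \to (\AF^n_\FY)^{(p)}$ induced by $g$.

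Third I would conclude by base change. By \ref{frobespacafin}.(\ref{frobrelproyecfinitflat}), $F_{\AF^n_\FY/\FY}$ is finite and flat with $F_{\AF^n_\FY/\FY\,*}\CO_{\AF^n_\FY}$ a locally free $\CO_{(\AF^n_\FY)^{(p)}}$-Algebra of rank $p^n$. Finiteness and flatness are stable under base change of morphisms of locally noetherian formal schemes (see \cite[Proposition 7.1]{AJL} and \cite[1.3]{AJP1}), so $\bar g$, and therefore $F_{\FX/\FY}$, is finite and flat. Finally, applying Proposition \ref{cambiobasefinito} to the cartesian right-hand square with the coherent sheaf $\CF = \CO_{\AF^n_\FY}$ yields $\bar g_{*}\CO_{\FX^{(p/\AF^n_\FY)}} \simeq g^{(p)*} F_{\AF^n_\FY/\FY\,*}\CO_{\AF^n_\FY}$ as $\CO_{\FX^{(p)}}$-Algebras, which is locally free of rank $p^n$ since pulling back a locally free Module of finite rank preserves the rank.

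The main obstacle I foresee is the bookkeeping in the second step: making sure the cartesian square relating the relative Frobenii of $\FX$ and $\AF^n_\FY$ over $\FY$ is correctly identified, and that the étale map $g$ yields (after applying $F_{\FY}$-base change) the morphism along which one pulls back $F_{\AF^n_\FY/\FY}$. Everything else is a routine invocation of results already recorded in the paper or standard base-change properties.
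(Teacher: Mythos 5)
Your proposal is correct and follows essentially the same route as the paper's proof: reduce via the local structure theorem to a factorization $\FX \xrightarrow{g} \AF^n_\FY \xrightarrow{\pi} \FY$ with $g$ étale, invoke Lemma~\ref{lemetfrobresmooth} to see that $F_{\FX/\FY}$ becomes (up to isomorphism) a base change of $F_{\AF^n_\FY/\FY}$, then conclude by stability of finite/flat under base change and Proposition~\ref{cambiobasefinito}. The only cosmetic difference is organizational: the paper encodes the identification of $F_{\FX/\FY}$ as a base change by assembling a prism of cartesian squares (showing $\square_1$ and $\Diamond_2$ are cartesian, hence $\Diamond_1$ is), whereas you phrase it through the factorization $F_{\FX/\FY} = \bar g \circ F_{\FX/\AF^n_\FY}$ with the right-hand square cartesian and $F_{\FX/\AF^n_\FY}$ an isomorphism; both reduce to the same verification, and the rest of the argument is identical.
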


\begin{proof}
By \cite[Proposition 5.9]{AJP2} we have
 that  for each  $x \in \FX$,  there exists an open subset  $\FU \subset \FX$ with $x \in \FU$ such that $f|_{\FU}$ factors as 
 \[
\FU \xto{\,g\,} \AF^{n}_{\FY} \xto{\,\pi\,} \FY
 \]
 where $g$ is \'etale, $\pi$ is the canonical  projection and $n = \rg (\om^{1}_{\CO_{\FX,x}/\CO_{\FY,f(x)}})$. We may assume that $\FU = \FX$. Taking the  diagram (\ref{diagrfrob})  for the morphisms $g$, $\pi$ and $f$ we have the following commutative diagram of locally noetherian formal schemes
\begin{equation}\label{prisma}
\begin{tikzpicture}[cross line/.style={preaction={draw=white, -, line width=8pt}},
                    baseline=(current  bounding  box.center)]
\matrix(m)[matrix of math nodes, row sep=2em, column sep=4em,
text height=1.5ex, text depth=0.25ex]{
\FX            &               & \FX           \\
               & \FX^{(p)}         &               \\
\AF^{n}_{\FY}  &               & \AF^{n}_{\FY} \\
               & (\AF^{n}_{\FY})^{ (p)} &               \\
\FY            &               & \FY           \\
               & \FY           &               \\}; 
\path[->,font=\scriptsize,>=angle 90]
(m-1-1) edge node[auto] {$F_{\FX}$} (m-1-3)
        edge node[above] {$F_{\FX/\FY}$} (m-2-2)
        edge node[left] {$g$} (m-3-1)
(m-1-3) edge node[auto] {$g$} (m-3-3)
(m-3-1) edge node[near start, auto] {$F_{\AF^{n}_{\FY}}$} (m-3-3)
        edge node[below] {$\,F_{\AF^{n}_{\FY}/\FY}\,\,$} (m-4-2)
        edge node[left] {$\pi$} (m-5-1)
(m-2-2) edge node[above] {$(F_{\FX})_{\FY}\qquad$} (m-1-3)
        edge [cross line] node[near start, left] {$g'$} (m-4-2)
(m-5-1) edge node[near start, auto] {$F_{\FY}$} (m-5-3)
        edge node[below] {$\id_{\FY}$} (m-6-2)
(m-4-2) edge node[below] {$\qquad (F_{\FY})_{\AF^{n}_{\FY}}$} (m-3-3)
        edge [cross line] node[near start, left] {$\pi^{(p)}$} (m-6-2)
(m-3-3) edge node[auto] {$\pi$} (m-5-3)
(m-6-2) edge node[below] {$F_{\FY}$} (m-5-3);
   \node at ([shift=({0em,2.3em})]m-2-2) {$\square_{1}$};
   \node at ([shift=({2.5em,-4.2em})]m-1-1) {$\Diamond_{1}$};
   \node at ([shift=({2.5em,-0.8em})]m-2-2) {$\Diamond_{2}$};
   \node at ([shift=({2.5em,-0.8em})]m-4-2) {$\Diamond_{3}$};
\end{tikzpicture}
\end{equation}
where:
\begin{itemize}
\item
the horizontal arrows are the absolute Frobenius  endomorphisms of  $\FX,\, \AF^{n}_{\FY}$ and $\FY$;
\item
$\FX^{(p)} = \FX \times_{F_{\FY}} \FY$ and $\Diamond_{3}$ is a cartesian square (so  $\Diamond_{2}$ is a cartesian square, too).
\end{itemize} 
Since $g$ is \'etale, by  Lemma \ref{lemetfrobresmooth} we have that $\square_{1}$ is a cartesian square  and, since $\Diamond_{2}$ is a cartesian square we  deduce
that $\Diamond_{1}$ is a cartesian square. On the other hand,  in \ref{frobespacafin}.(\ref{frobrelproyecfinitflat}) we have shown that $F_{\AF^{n}_{\FY}/\FY}$ is finite, flat and that  $(F_{\AF^{n}_{\FY}/\FY})_{*}  \CO_{\AF^{n}_{\FY}}$ is a locally free $\CO_{(\AF^{n}_{\FY})^{(p)}}$-Algebra  of rank $p^{n}$ with base $\{ \prod_{i=1}^{n}T_{i}^{m_{i}}, 0 \le m_{i} \le p-1\}$. Then by  base-change (see \cite[Proposition 7.1]{AJL}) we have that $F_{\FX/\FY}$ is finite and flat. Moreover, from Proposition \ref{cambiobasefinito} it results that:
\[
F_{\FX/\FY\,*} \CO_{\FX} = F_{\FX/\FY\,*} g^{*} \CO_{\AF^{n}_{\FY}} = g'^{*}F_{\AF^{n}_{\FY}/\FY\,*} \CO_{\AF^{n}_{\FY}}
\]
and, therefore, by \ref{frobespacafin}(\ref{frobrelproyecfinitflat}), $F_{\FX/\FY\,*} \CO_{\FX}$ is a locally free  $\CO_{\FX^{(p)}}$-Algebra  of rank $p^{n}$ with base  $\{g'^{\sharp} ( \prod_{i=1}^{n}T_{i}^{m_{i}}), 0 \le m_{i} \le p-1\}$.
\end{proof}

\begin{cor} \label{corfrobfinitplan}
Let $\FY$ be a locally noetherian formal scheme of characteristic $p$ and $f \colon  \FX \to \FY$ be a smooth morphism of relative dimension $n$. Then $F_{\FX/\FY\,*} \omi_{\FX/\FY}$ is a locally free $\CO_{\FX^{(p)}}$-Module  of rank $p^{n}\cdot \binom{n}{i}$, for all $i \in \{0,1,\ldots, n \}$.
\end{cor}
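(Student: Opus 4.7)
The plan is to combine Proposition \ref{frobfinitplan}, which gives that $F_{\FX/\FY\,*}\CO_{\FX}$ is a locally free $\CO_{\FX^{(p)}}$-Algebra of rank $p^{n}$, with the local freeness of $\omi_{\FX/\FY}$ of rank $\binom{n}{i}$ over $\CO_{\FX}$ recorded in \ref{prodextlocallibr}. The question is local on $\FX^{(p)}$, and by Proposition \ref{propifrobreladicohomeo} the morphism $F_{\FX/\FY}$ is a homeomorphism, so opens of $\FX$ and of $\FX^{(p)}$ correspond bijectively via $F_{\FX/\FY}^{-1}$.

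Concretely, fix $x'\in \FX^{(p)}$ and let $x=F_{\FX/\FY}^{-1}(x')$. By \ref{prodextlocallibr} there is an open $\FU \subset \FX$ containing $x$ on which $\omi_{\FX/\FY}|_{\FU}$ is free of rank $\binom{n}{i}$; by Proposition \ref{frobfinitplan} there is an open $\FV \subset \FX^{(p)}$ containing $x'$ on which $(F_{\FX/\FY\,*}\CO_{\FX})|_{\FV}$ is free of rank $p^{n}$. Using that $F_{\FX/\FY}$ is a homeomorphism, I shrink so that $\FU = F_{\FX/\FY}^{-1}(\FV)$ and both trivializations hold simultaneously. On such a pair, the isomorphism $\omi_{\FX/\FY}|_{\FU}\simeq \CO_{\FU}^{\binom{n}{i}}$ transports through $F_{\FX/\FY\,*}$ (which commutes with finite direct sums) to an isomorphism
\[
F_{\FX/\FY\,*}(\omi_{\FX/\FY})\big|_{\FV} \;\simeq\; \bigl((F_{\FX/\FY\,*}\CO_{\FX})|_{\FV}\bigr)^{\binom{n}{i}} \;\simeq\; \CO_{\FX^{(p)}}^{\,p^{n}\binom{n}{i}}\big|_{\FV},
\]
which is the asserted local freeness.

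There is no real obstacle: everything needed has been assembled in the preceding results. The only minor care required is the matching of opens on $\FX$ and $\FX^{(p)}$, which is handled by the fact that $F_{\FX/\FY}$ is a homeomorphism, so the two separate local trivializations can be arranged over a common affine open $\FV \subset \FX^{(p)}$ with preimage $\FU \subset \FX$.
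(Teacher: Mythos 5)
Your proof is correct and takes essentially the same approach as the paper, which simply observes that the result follows from the local freeness of $\omi_{\FX/\FY}$ over $\CO_{\FX}$ (\ref{prodextlocallibr}) together with Proposition \ref{frobfinitplan}. You have usefully spelled out the detail the paper leaves implicit, namely how the homeomorphism property of $F_{\FX/\FY}$ lets one arrange both trivializations over a common open, but the underlying argument is identical.
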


\begin{proof}
Let $0 \le i \le n$. From \ref{prodextlocallibr} we have that $\omi_{\FX/\FY}$ is a locally free $\CO_{\FX}$-Module of rank $\binom{n} {i}$ and therefore, the result is consequence of Proposition \ref{frobfinitplan}.
\end{proof}


\section{Cartier isomorphism}\label{cartsec}

One of the technical tools more used for the differential study  of schemes of characteristic $p$ is the Cartier isomorphism \cite{ca}. Our next task will be to extend it to smooth morphisms of locally noetherian formal schemes of characteristic $p$ following \cite[(7.2)]{k}.

\begin{cosa} \label{relativonulo}
Given $\FY$ a  locally noetherian formal scheme of characteristic $p$  let $f \colon  \FX \to \FY$ be a morphism  of locally noetherian formal schemes. For all open set  $\FU \subset \FX$ and for all $a \in \ga(\FU,\CO_{\FX})$, it holds that 
\[
\hd(a^{p})= p\cdot a^{p-1} \cdot \hd (a) = 0
\]
Therefore  the absolute Frobenius morphism  of $\FX$ and the relative Frobenius  morphism   of $\FX$ over $\FY$ induce zero morphisms
\[
F_{\FX}^{*} \om^{1}_{\FX/\FY} \overset{0}\lto \om^{1}_{\FX/\FY}, \qquad
F_{\FX/\FY}^{*} \om^{1}_{\FX^{(p)}/\FY} 		       \overset{0}\lto \om^{1}_{\FX/\FY}
\]
respectively (see \ref{frobrelafin}). After all, the differentials are null being radical morphisms.
\end{cosa}

\begin{cosa}
Given a locally noetherian formal scheme $\FY$ of characteristic $p$ and $f \colon  \FX \to \FY$ in $\sfn$ it holds that
$F_{\FX/\FY\,*} \om^{\bullet}_{\FX/\FY}:=(F_{\FX/\FY\,*} \om^{\bullet}_{\FX/\FY}, F_{\FX/\FY\,*} \hd^{\, \bullet})$ is a complex of $\CO_{\FX^{(p)}}$-Modules. Indeed, given an open set $\FU \subset \FX$,  
$a \tc b \in \ga(\FU, \CO_{\FX^{(p)}})$ and $c \in \ga(\FU, F_{\FX/\FY\,*} \CO_{\FX})$ there results that:
\begin{align*}
F_{\FX/\FY\,*} \hd(a \tc b \cdot c) &= \hd ( F_{A/B}(a \tc b) \cdot c) 
          = \hd ( a^{p} \cdot b \cdot  c) 
          = b \cdot \hd ( a^{p}\cdot  c) =\\    
        & = b \cdot p \cdot a^{p-1} \cdot \hd (a)\cdot  c + b \cdot a^{p} \cdot \hd (c) =\\
        & = a \tc b \cdot F_{\FX/\FY\,*} \hd(c).
\end{align*}

It holds that the sheaves of abelian groups  $\bigoplus_{i \in \ZZ} \CZ^{i} (F_{\FX/\FY\,*}\om^{\bullet}_{\FX/\FY})$ and $\bigoplus_{i \in \ZZ} \CH^{i} (F_{\FX/\FY\,*}\om^{\bullet}_{\FX/\FY})$  have structure  of supercommutative $\CO_{\FX^{(p)}}$-Algebras  determined by  the exterior product  so, the elements of degree $1$ are of zero square.

\end{cosa}

\begin{cosa} \label{ImdirfrobcomRhamcoh}
Let $f \colon  \FX \to \FY$  be a smooth morphism  of locally noetherian formal schemes  of characteristic $p$.  In this setting, there exists   a unique morphism of graded $\CO_{\FX^{(p)}}$-Algebras
\begin{equation} \label{ecisocart}
\gamma \colon\bigoplus_{i \in \ZZ} \om^{i}_{\FX^{(p)}/\FY} \lto 
\bigoplus_{i \in \ZZ} \CH^{i} (F_{\FX/\FY\,*}\om^{\bullet}_{\FX/\FY})
\end{equation} 
such that $\gamma^{0}$ is the canonical  morphism $\CO_{\FX^{(p)}} \to F_{\FX/\FY\,*}\CO_{\FX}$ and $\gamma^{1}$ is locally given by  $\hd(a) \otimes 1 \leadsto[a^{p-1} \hd (a)]$.

Uniqueness follows from the fact that $\bigoplus_{i \in \ZZ} \CH^{i} (F_{ *}\om^{\bullet}_{\FX/\FY})$ is a graded $\CO_{\FX^{(p)}}$-Algebra where the elements  of degree $1$ are of square zero (\cfr{}\cite[Ch. III, \S7.1, Proposition 1]{boual}).

For the existence, 
applying \cite[\emph{loc. cit.}]{boual} it suffices to give $\gamma^{0}$ and $\gamma^{1}$ as above. Consider  the morphism $D$ defined, for every open set $\FU \subset \FX$, by  
\[
\begin{array}{cccc}
\ga(\FU,D) \colon &\ga(\FU,\CO_{\FX^{(p)}}) &\lto &\ga(\FU,\CH^{1} (F_{\FX/\FY\,*}\om^{\bullet}_{\FX/\FY}))\\
&a \tc 1		& \leadsto		& [a^{p-1} \hd (a)]
\end{array}
\]
It is well defined since: 
\begin{equation*}
F_{\FX/\FY\,*} \hd(a^{p-1} \hd (a))=  \hd(a^{p-1}) \wedge \hd (a)= (p-1)a^{p-2} \hd(a) \wedge \hd (a)=0
\end{equation*}
and, therefore, $a^{p-1} \hd (a) \in \ga(\FU, \CZ^{1} (F_{\FX/\FY\,*}\om^{\bullet}_{\FX/\FY}))$.

Let us show that $D \in \Dercont_{\FY}(\CO_{\FX^{(p)}}, \CH^{1} (F_{\FX/\FY\,*}\om^{\bullet}_{\FX/\FY}))$. It is easily checked that $D$ is a continuous morphism. First, we will prove that  is a morphism of sheaves of abelian groups. We take $\FU \subset \FX$ an open subset and $a_{1},\, a_{2} \in \ga(\FU, \CO_{\FX})$. 
Applying  formally $\hd$ to the equality
\[
(a_{1}+a_{2})^{p} =
a_{1}^{p}+a_{2}^{p}+p \cdot\left(\sum_{i=1}^{p-1} \frac{(p-1)!}{i! \cdot(p-i)!} \cdot a_{1}^{i} \cdot a_{2}^{p-i}\right)
\]
we deduce that
\begin{multline*}
p \cdot (a_{1}+a_{2})^{p-1} \hd(a_{1}+a_{2})= \\ p\cdot \left( a_{1}^{p-1} \hd(a_{1}) +  a_{2}^{p-1} \hd(a_{2}) + \hd\left(\sum_{i=1}^{p-1} \frac{(p-1)!}{i! \cdot(p-i)!} \cdot a_{1}^{i} \cdot a_{2}^{p-i}\right)\right)
\end{multline*}
from which it follows that $D((a_{1}+ a_{2}) \tc 1) = D(a_{1} \tc 1) + D(a_{2} \tc  1)$.  
Next,
\[
\begin{split}
D((a_{1}\cdot a_{2}) \tc 1)&= [(a_{1} \cdot a_{2})^{p-1}\hd(a_{1} \cdot a_{2})]\\&=  [a_{2}^{p}\cdot a_{1}^{p-1}\hd(a_{1})]+ [a_{1}^{p}\cdot a_{2}^{p-1} \hd(a_{2}))]\\&= (a_{2} \tc  1)\cdot D(a_{1} \tc 1)+ (a_{1}\tc 1) \cdot D(a_{2} \tc 1 )
\end{split}
\]
and so we conclude that $D$ is a continuous $\FY$-derivation.

By Corollary  \ref{corfrobfinitplan} $F_{\FX/\FY\,*} \om^{\bullet}_{\FX/\FY}$ is a complex of locally free $\CO_{\FX^{(p)}}$-Modules  of finite rank and, in particular, $\CH^{i} (F_{\FX/\FY\,*}\om^{\bullet}_{\FX/\FY}) \in \A_{\cc}(\FX^{(p)})$, for all $i$. Applying \cite[Theorem 3.5]{AJP1}  it results that there exists an  unique homomorphism of $\CO_{\FX^{(p)}}$-Modules 
\[
\gamma^{1}\colon  \om^{1}_{\FX^{(p)}/\FY} \lto \CH^{1} (F_{\FX/\FY\,*}\om^{\bullet}_{\FX/\FY})
\]
 such that the following diagram is commutative:
\begin{equation*}
\begin{tikzpicture}[baseline=(current  bounding  box.center)]
\matrix(m)[matrix of math nodes, row sep=3em, column sep=3em,
text height=1.5ex, text depth=0.25ex]{
\CO_{\FX^{(p)}}                           & \om^{1}_{\FX^{(p)}/\FY} \\
\CH^{1} (F_{\FX/\FY\,*}\om^{\bullet}_{\FX/\FY}) &                    \\}; 
\path[->,font=\scriptsize,>=angle 90]
(m-1-1) edge node[auto] {$\hd'$} (m-1-2)
        edge node[left] {$D$} (m-2-1)
(m-1-2) edge node[auto] {$\gamma^{1}$} (m-2-1);
\end{tikzpicture}
\end{equation*}
Therefore,  applying again \cite[\emph{loc. cit.}]{boual} there exists an unique morphism of graded $\CO_{\FX^{(p)}}$-Algebras
\begin{equation*}
\gamma \colon
\bigoplus_{i \in \ZZ} \om^{i}_{\FX^{(p)}/\FY} \lto 
\bigoplus_{i \in \ZZ} \CH^{i} (F_{\FX/\FY\,*}\om^{\bullet}_{\FX/\FY})
\end{equation*}
that in degrees $0$ and  $1$ is defined by $\gamma^{0}$ and $\gamma^{1}$, respectively.
\end{cosa}

\begin{thm}  \label{teorisocar}
With hypothesis as in \ref{ImdirfrobcomRhamcoh}, the morphism $\gamma$ depicted in (\ref{ecisocart}) is an isomorphism and it is called the \emph{Cartier isomorphism}.
\end{thm}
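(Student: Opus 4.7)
My plan is to prove the theorem locally on $\FX$, first reducing to the case of the affine formal space and then computing the cohomology directly. Since the construction of $\gamma$ is local and both $\om^{i}_{\FX^{(p)}/\FY}$ and $\CH^{i}(F_{\FX/\FY\,*}\om^{\bullet}_{\FX/\FY})$ are coherent $\CO_{\FX^{(p)}}$-modules (the latter by Corollary \ref{corfrobfinitplan}), I may work on an arbitrary affine open. By \cite[Proposition 5.9]{AJP2} I shrink $\FX$ so that $f$ factors as
\[
\FX \xto{\,g\,} \AF^{n}_{\FY} \xto{\,\pi\,} \FY
\]
with $g$ étale. As in the proof of Proposition \ref{frobfinitplan}, the cartesian square $\square_{1}$ of diagram (\ref{prisma}) exhibits $F_{\FX/\FY}$ as the base change of $F_{\AF^{n}_{\FY}/\FY}$ along the étale (in particular, flat) morphism $g'\colon \FX^{(p)} \to (\AF^{n}_{\FY})^{(p)}$.

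Next I would transfer the Cartier morphism to the affine space through this square. Since $g$ is étale, $\om^{1}_{\FX/\AF^{n}_{\FY}} = 0$ and the map (\ref{cbasepeimap}) yields an isomorphism of complexes $g^{*}\om^{\bullet}_{\AF^{n}_{\FY}/\FY} \liso \om^{\bullet}_{\FX/\FY}$. Combining this with Proposition \ref{cambiobasefinito} applied to the finite morphism $F_{\AF^{n}_{\FY}/\FY}$ (finiteness being \ref{frobespacafin}(\ref{frobrelproyecfinitflat})), I obtain
\[
F_{\FX/\FY\,*}\om^{\bullet}_{\FX/\FY} \;\cong\; g'^{*}\bigl(F_{\AF^{n}_{\FY}/\FY\,*}\om^{\bullet}_{\AF^{n}_{\FY}/\FY}\bigr)
\]
as complexes of $\CO_{\FX^{(p)}}$-modules. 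Since $g'$ is flat, $g'^{*}$ commutes with cohomology, and likewise $\om^{i}_{\FX^{(p)}/\FY} \cong g'^{*}\om^{i}_{(\AF^{n}_{\FY})^{(p)}/\FY}$. A direct check on local generators of the form $\hd a \otimes 1$ confirms that, under these identifications, $\gamma$ for $f$ is $g'^{*}$ of $\gamma$ for $\pi$, so it is enough to prove the theorem for $\pi$.

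For $\pi$, I use the isomorphism $\Phi$ of \ref{frobespacafin}(\ref{frobespacafiniso}) to identify $F_{\AF^{n}_{\FY}/\FY}$ with the endomorphism $T_{i} \mapsto T_{i}^{p}$ of $B\{T_{1},\dots,T_{n}\}$. By \ref{frobespacafin}(\ref{frobrelproyecfinitflat}), $F_{\AF^{n}_{\FY}/\FY\,*}\CO_{\AF^{n}_{\FY}}$ is free on $\{T^{\mathbf m} : 0 \le m_{j} \le p-1\}$ over $\CO_{(\AF^{n}_{\FY})^{(p)}}$. In one variable a direct computation gives
\[
\ker\bigl(B\{T\} \xto{\,d\,} B\{T\}\,dT\bigr) = B\{T^{p}\}, \qquad B\{T\}\,dT / d(B\{T\}) \;\cong\; B\{T^{p}\}\cdot [T^{p-1}\,dT],
\]
and $\gamma^{1}(dT) = [T^{p-1}\,dT]$ is the displayed generator, so $\gamma$ is an isomorphism for $n=1$. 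For arbitrary $n$, $F_{\AF^{n}_{\FY}/\FY\,*}\om^{\bullet}_{\AF^{n}_{\FY}/\FY}$ decomposes coordinatewise as a Koszul-type tensor product of the $n$ one-variable complexes; a Künneth argument then shows that $\CH^{\bullet}$ has basis $\{[T^{\mathbf{m}(S)}\,dT_{S}] : S \subseteq \{1,\dots,n\}\}$, with $\mathbf{m}(S)_{j} = p-1$ if $j\in S$ and $0$ otherwise. Because $\gamma$ is a morphism of graded algebras, this basis is precisely the image under $\gamma$ of the canonical basis $\{dT_{S}\}_{S}$ of $\om^{\bullet}_{(\AF^{n}_{\FY})^{(p)}/\FY}$, so $\gamma$ is an isomorphism.

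The main obstacle I foresee is the base-change step: identifying $\gamma$ for $f$ with $g'^{*}$ of $\gamma$ for $\pi$ requires reconciling the finite-pushforward interchange of Proposition \ref{cambiobasefinito} with the flat-pullback compatibility of cohomology, and then tracing through the construction of $\gamma$ on local differential generators $\hd a \otimes 1$. After that is secured, the explicit computation on $\AF^{n}_{\FY}$ is routine, apart from the care needed to pass from one variable to $n$ variables in the setting of restricted power series.
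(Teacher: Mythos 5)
Your proof is correct and is structured similarly to the paper's — both reduce the general case, locally via the étale factorization $f = \pi \circ g$ of \cite[Proposition 5.9]{AJP2}, to the case of the affine formal space, using the flatness of $g'$ and the finite base-change formula of Proposition \ref{cambiobasefinito}. The genuine divergence is in how you handle $\pi_{\FY}\colon \AF^{n}_{\FY} \to \FY$ itself. The paper builds a second cartesian prism over $\AF^{n}_{\mathbb{F}_p} \to \spec(\mathbb{F}_p)$ and imports the classical scheme-theoretic Cartier isomorphism of Katz, pulling it back along the flat map $(\AF^{n}_{\FY})^{(p)} \to (\AF^{n}_{\mathbb{F}_p})^{(p)}$. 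You instead compute directly on restricted power series: the one-variable kernel/cokernel computation for $d\colon B\{T\}\to B\{T\}\,dT$, followed by an $R$-linear tensor decomposition of $F_{*}\om^{\bullet}_{\AF^{n}_{\FY}/\FY}$ into one-variable complexes (where $R = \CO_{(\AF^n_\FY)^{(p)}}$, and each factor is finite free of rank $p$ over $R$) and a K\"unneth argument. Your route is more self-contained — it makes no appeal to the scheme Cartier isomorphism — at the cost of explicitly justifying the tensor decomposition and the K\"unneth formula, both of which do hold here because all terms and all cohomology modules of the one-variable factors are finite free over $R$. The paper's route is shorter but relies on the classical result as a black box. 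Both approaches share the same implicit burden at the étale step (checking that the constructed isomorphism agrees with the $\gamma$ already defined, on local generators $\hd a \otimes 1$), which you make explicit and the paper passes over more lightly.
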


\begin{proof}
We will do it in three steps:

(1) If $f \colon  X \to Y$ is a smooth morphism in $\sch$ with $Y$ of characteristic $p$, $\gamma$ is the Cartier isomorphism in $\sch$ (\cite[(7.2)]{k}).

(2) Let us  prove the result for  the canonical projection   $\pi_{\FY}\colon  \AF_{\FY}^{n} \to \FY$. Considering the diagram (\ref{diagrfrob})  for the morphisms $\pi_{\FY}$ and $\pi\colon   \AF_{\mathbb{F}_{p}}^{n}  \to \spec(\mathbb{F}_{p})$ and, keeping in mind \ref{frobespacafin}.(\ref{frobespacafiniso}) we have  the following commutative diagram in $\sfn$:
\begin{equation*}
\begin{tikzpicture}[baseline=(current  bounding  box.center), 
                    cross line/.style={preaction={-, draw=white, line width=6pt}}]
\matrix(m)[matrix of math nodes, row sep=2.8em, column sep=1.5em,
text height=1.5ex, text depth=0.25ex]{
\AF^{n}_{\FY} &      & (\AF^{n}_{\FY})^{(p)} &       & \AF^{n}_{\FY}       \\
& \AF_{\mathbb{F}_{p}}^{n} && (\AF_{\mathbb{F}_{p}}^{n})^{(p)}&& \AF_{\mathbb{F}_{p}}^{n} \\
\FY    &    & \FY              &      & \FY           \\
& \spec(\mathbb{F}_p) && \spec(\mathbb{F}_p)   && \spec(\mathbb{F}_p)\\}; 
\path[->,font=\scriptsize,>=angle 90]
(m-1-1) edge node[auto] {$F_{\AF^{n}_{\FY}/\FY}$} (m-1-3)
        edge node[above] {$g$} (m-2-2)
        edge node[near end, left] {$\pi_{\FY}$} (m-3-1)
(m-1-3) edge node[auto] {$(F_{\FY})_{\AF^{n}_{\FY}}$} (m-1-5)
        edge node[above] {$g'$} (m-2-4)
        edge node[near end, left] {$\pi_{\FY}^{(p)}$} (m-3-3)
(m-1-5) edge node[auto] {$g$} (m-2-6)
        edge node[near end, right] {$\pi_{\FY}$} (m-3-5)
(m-2-2) edge [cross line] node[near start, above] {$F_{\AF_{\mathbb{F}_{p}}^{n}/\mathbb{F}_p}$} (m-2-4)
(m-2-4) edge [cross line] node[near start, above] {$(F_{\mathbb{F}_p})_{\AF_{\mathbb{F}_{p}}^{n}}\quad$} (m-2-6)
(m-2-6) edge node[near end, right] {$\pi$} (m-4-6)
(m-3-1) edge node[near start, auto] {$\id_{\FY}$} (m-3-3)
        edge node[left] {$h\,$} (m-4-2)
(m-2-2) edge [cross line] node[near end, left] {$\pi$} (m-4-2)
(m-3-3) edge node[near start, auto] {$F_{\FY}$} (m-3-5)
        edge node[left] {$h$} (m-4-4)
(m-2-4) edge [cross line] node[near end, left] {$\pi^{(p)}$} (m-4-4)
(m-3-5) edge node[right] {$\,h$} (m-4-6)
(m-4-2) edge node[below] {$\id_{\mathbb{F}_p}$} (m-4-4)
(m-4-4) edge node[below] {$F_{\mathbb{F}_p}$} (m-4-6);
   \node at ([shift=({0em,2.3em})]m-2-4) {$\square_{1}$};
   \node at ([shift=({0em,2.3em})]m-2-2) {$\Diamond_{4}$};
   \node at ([shift=({1.8em,-2.3em})]m-2-4) {$\square_{2}$};
   \node at ([shift=({1.5em,-2.7em})]m-1-1) {$\Diamond_{3}$};
   \node at ([shift=({1.5em,-2.7em})]m-1-3) {$\Diamond_{2}$};
      \node at ([shift=({1.5em,-2.7em})]m-1-5) {$\Diamond_{1}$};
\end{tikzpicture}
\end{equation*} 
The squares $\square_{1},\, \square_{2}$ and $\Diamond_{1}$ are cartesian, therefore the square $\Diamond_{2}$  is also cartesian. Since $\Diamond_{3}$ is a cartesian square it results  that $\Diamond_{4}$ is also  cartesian.
Applying (1), we have the Cartier isomorphism asociated to the scheme morphism $\pi$:
\begin{equation} \label{isocarafinusual}
\gamma_{n,\mathbb{F}_{p}} \colon
\bigoplus_{i \in \ZZ} \Omega^{i}_{(\AF_{\mathbb{F}_{p}}^{n})^{(p)}/\mathbb{F}_{p}}
\liso
\bigoplus_{i \in \ZZ} \CH^{i} (F_{\AF_{\mathbb{F}_{p}}^{n}/\mathbb{F}_p\,*}\Omega^{\bullet}_{\AF_{\mathbb{F}_{p}}^{n}/\mathbb{F}_{p}})
\end{equation} 
Proposition \cite[Proposition 3.7]{AJP1} implies that $\om^{1}_{(\AF_{\FY}^{n})^{(p)}/\FY} \cong g'^{*} \Omega^{1}_{(\AF_{\mathbb{F}_{p}}^{n})^{(p)}/\mathbb{F}_{p}}  $ and, from the fact that $g'$ is a flat  morphism (by  base-change) and from the isomorphism (\ref{isocarafinusual}) we deduce the isomorphism
\begin{equation*} 
\gamma_{n,\FY} \colon 
\bigoplus_{i \in \ZZ} \om^{i}_{(\AF_{\FY}^{n})^{(p)}/\FY}
\liso  
\bigoplus_{i \in \ZZ} \CH^{i} (g'^{*} F_{\AF_{\mathbb{F}_{p}}^{n}/\mathbb{F}_p\,*}\Omega^{\bullet}_{\AF_{\mathbb{F}_{p}}^{n}/\mathbb{F}_{p}})
\end{equation*} 
By  \ref{frobespacafin}.(\ref{frobrelproyecfinitflat}) we have that $F$ is a finite  morphism  and then Proposition \ref{cambiobasefinito} applies. Therefore $ g'^{*} F_{\AF_{\mathbb{F}_{p}}^{n}/\mathbb{F}_p\,*}\Omega^{\bullet}_{\AF_{\mathbb{F}_{p}}^{n}/\mathbb{F}_{p}} \cong F_{\AF^{n}_{\FY}/\FY\,*}g^{*} \Omega^{\bullet}_{\AF_{\mathbb{F}_{p}}^{n}/\mathbb{F}_{p}}$ and we obtain the Cartier  isomorphism asociated to $\pi_{\FY}$
\begin{equation*} 
\gamma_{n,\FY} \colon
\bigoplus_{i \in \ZZ} \om^{i}_{(\AF_{\FY}^{n})^{(p)}/\FY}
\liso
\bigoplus_{i \in \ZZ} \CH^{i} (F_{\AF^{n}_{\FY}/\FY\,*}\om^{\bullet}_{\AF_{\FY}^{n}/\FY})
\end{equation*}

(3) In the general  case, since it is a local question by \cite[Proposition 5.9]{AJP2}
we may suppose  that  $f$ factors in $\pi \circ g\colon  \FX \to \AF^{n}_{\FY} \to \FY$,
where $g$ is \'etale and $\pi$ is the canonical projection. Considering the diagram (\ref{diagrfrob})  for  the morphisms $g$, $\pi$ and $f$ we have a commutative diagram of locally noetherian formal schemes (\ref{prisma}) where $\Diamond_{1},\, \square_{1},\, \Diamond_{2}$ and $\Diamond_{3}$ are cartesian squares. Notice that we use that $g$ is \'etale.

By (2), associated to the  morphism $\pi_{\FY}\colon  \AF^{n}_{\FY} \to \FY$, we have the Cartier  isomorphism :
\begin{equation} \label{isoCartierespacafin}
\gamma_{n,\FY} \colon
\bigoplus_{i \in \ZZ} \om^{i}_{(\AF^{n}_{\FY})^{(p)}/\FY}
\liso 
\bigoplus_{i \in \ZZ} \CH^{i} (F_{\AF^{n}_{\FY}/\FY\,*}\om^{\bullet}_{\AF^{n}_{\FY}/\FY})
\end{equation} 
Since $g$ is \'etale and $\Diamond_{3}$ is a cartesian square, by  base-change (\cite[Proposition 2.9]{AJP1}) we have that $g'$ is \'etale and from \cite[Corollary 4.10]{AJP1} we deduce that $g'^{*} \om^{i}_{(\AF^{n}_{\FY})^{(p)}/\FY} \cong \om^{i}_{\FX^{(p)}/\FY}$ for all $i \in \ZZ$.  Since $g'$ is flat and,  applying $g'^{*}$ to the isomorphism (\ref{isoCartierespacafin}) we have the following isomorphism
 \begin{equation*} 
\bigoplus_{i \in \ZZ} \om^{i}_{\FX^{(p)}/\FY}
\liso
\bigoplus_{i \in \ZZ}\CH^{i}( g'^{*}F_{\AF^{n}_{\FY}/\FY\,*}\om^{\bullet}_{\AF^{n}_{\FY}/\FY}).
\end{equation*}
On the other hand,   $g$ is \'etale and, from \cite[\emph{loc. cit}.]{AJP1} we deduce that $g^{*} \om^{i}_{\AF^{n}_{\FY}/\FY} \cong \om^{i}_{\FX/\FY}$ for all $i \in \ZZ$. Last, applying Proposition \ref{cambiobasefinito} it results that for all $i$
\[
F_{\FX/\FY\,*}  \om^{i}_{\FX/\FY} = F_{\FX/\FY\,*} g^{*} \om^{i}_{\AF^{n}_{\FY}/\FY} \cong g'^{*}F_{\AF^{n}_{\FY}/\FY\,*} \om^{i}_{\AF^{n}_{\FY}/\FY}
\] 
therefore $\CH^{i} (F_{\FX/\FY\,*}\om^{\bullet}_{\FX/\FY}) \cong \CH^{i} (g'^{*}F_{\AF^{n}_{\FY}/\FY\,*}\om^{\bullet}_{\AF^{n}_{\FY}/\FY})$ as wanted.
\end{proof}


\section{Decomposition Theorem up to $p$} \label{sec4}



\begin{cosa}
Recall that a complex $\CE \in \D(\FX)$  is \emph{decomposable} if it is isomorphic to a  complex in $\D(\FX)$ with zero differential. A \emph{decomposition} of $\CE$ is an isomorphism
\begin{equation} 
\CE  \cong \bigoplus_{i \in \ZZ} \CH^{i} \CE[-i] \textrm{ in } \D(\FX)
\end{equation}
that induces the identity between the homologies. 
\end{cosa}


\begin{cosa}
(\cfr{}\cite[3.1]{P})
 Let $\FW$ be a  locally noetherian formal scheme of characteristic $p$, $i \colon \FW \inc \FZ$ a  closed immersion given by a square zero ideal $\CI \subset \CO_{\FZ}$ and  $g \colon \FY \to \FW$ a flat (smooth) morphism. If there exists   a  flat (smooth) morphism $\tilde{g} \colon \widetilde{\FY} \to \FZ$ in $\sfn$ such that  the   diagram
\begin{equation*}
\begin{tikzpicture}[baseline=(current  bounding  box.center)]
\matrix(m)[matrix of math nodes, row sep=3em, column sep=3em,
text height=1.5ex, text depth=0.25ex]{
\widetilde{\FY}     & \FZ \\
\FY & \FW \\}; 
\path[->,font=\scriptsize,>=angle 90]
(m-1-1) edge[dashed] node[auto] {$\tilde{g}$} (m-1-2)
(m-2-1) edge node[auto] {$g$} (m-2-2);
\path[left hook->,font=\scriptsize,>=angle 90]
(m-2-1) edge (m-1-1)
(m-2-2) edge node[auto] {$i$}(m-1-2);
\end{tikzpicture}
\end{equation*}
is cartesian we will say that $\widetilde{\FY}$, or that the  morphism $\FY \inc \widetilde{\FY}$, or that $\tilde{g}$ is a 
\emph{flat (smooth) lifting of $\FY$ over $\FZ$}. 

Whenever $\FW = \spec(\mathbb{F}_{p})$ and $\FZ=\spec(\ZZ/p^{2} \ZZ)$ we will say that $\widetilde{\FY}$ is  flat (smooth) lifting of $\FY$ over $\ZZ/p^{2} \ZZ$.
\end{cosa}
 
The following is one of the main results of this paper. It extends to formal schemes the classical Decomposition Theorem in \cite[Corollaire 3.7.(a)]{deill} (see also \cite[Th\'eor\`eme 5.1]{ill}).

\begin{thm}[Decomposition Theorem] \label{teoremadedescomposicion}
Let $\FY$ be a  locally noetherian formal scheme of characteristic $p$ and $\widetilde{\FY}$ a flat lifting of $\FY$ over $\ZZ/p^{2} \ZZ$. Let $f \colon  \FX \to \FY$ be a smooth morphism of locally noetherian formal schemes. Any smooth lifting $\widetilde{\FX^{(p)}}$ of $\FX^{(p)}$ over $\widetilde{\FY}$ provides a decomposition of the complex $\tau^{<p}(F_{\FX/\FY\,*} \om^{\bullet}_{\FX/\FY})$ in $\D(\FX^{(p)})$, where $F_{\FX/\FY} \colon  \FX \to \FX^{(p)}$ denotes the relative Frobenius morphism of $\FX$ over $\FY$. 
\end{thm}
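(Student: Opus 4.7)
The plan is to adapt the Deligne--Illusie strategy to the formal setting. The target is to construct, in $\D(\FX^{(p)})$, a morphism
\[
\bigoplus_{i<p} \om^{i}_{\FX^{(p)}/\FY}[-i] \lto \tau^{<p}(F_{\FX/\FY\,*}\om^{\bullet}_{\FX/\FY})
\]
which, after taking cohomology sheaves, recovers the Cartier isomorphism $\gamma$ of Theorem \ref{teorisocar}. Since $\gamma$ is then an isomorphism in each degree, this map will be an isomorphism in the derived category and will provide the required decomposition.

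The heart of the matter is a local construction. Working on a sufficiently small open $\FU \subset \FX^{(p)}$, the smoothness of $f$ and formal deformation theory (as developed in \cite{P}) provide a smooth lifting $\widetilde{\FX}$ of $f^{-1}(\FU)$ over $\widetilde{\FY}$; the smoothness of $\widetilde{\FX^{(p)}} \to \widetilde{\FY}$ then allows lifting the relative Frobenius to a morphism $\tilde F \colon \widetilde{\FX} \to \widetilde{\FX^{(p)}}$ over $\widetilde{\FY}$. Since $F_{\FX/\FY}$ kills differentials (see \ref{relativonulo}), the induced map $\tilde F^{\, *} \colon \om^{1}_{\widetilde{\FX^{(p)}}/\widetilde{\FY}} \to \tilde F_{*} \om^{1}_{\widetilde{\FX}/\widetilde{\FY}}$ factors, modulo $p$, through zero; flatness of $\widetilde{\FY}$ over $\ZZ/p^{2}\ZZ$ permits division by $p$, yielding an $\CO_{\FX^{(p)}}$-linear map
\[
\varphi_{\tilde F} \colon \om^{1}_{\FX^{(p)}/\FY} \lto F_{\FX/\FY\,*}\CZ^{1}(\om^{\bullet}_{\FX/\FY}).
\]
A direct check on local generators $\hd a \otimes 1$ shows that the class of $\varphi_{\tilde F}$ in $F_{\FX/\FY\,*}\CH^{1}(\om^{\bullet}_{\FX/\FY})$ equals $\gamma^{1}$. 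Taking exterior powers and using that $F_{\FX/\FY\,*}\om^{\bullet}_{\FX/\FY}$ is a graded-commutative differential Algebra, one extends $\varphi_{\tilde F}$ to a morphism of complexes
$\om^{\bullet}_{\FX^{(p)}/\FY} \to F_{\FX/\FY\,*}\om^{\bullet}_{\FX/\FY}$
whose induced map on each $\CH^{i}$ coincides with $\gamma^{i}$.

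The next step is to analyse the dependence on $\tilde F$. Given two lifts $\tilde F_{0}, \tilde F_{1}$ over the same open set, the difference $(\tilde F_{1}^{\, *} - \tilde F_{0}^{\, *})/p$ defines a morphism $h \colon \om^{1}_{\FX^{(p)}/\FY} \to F_{\FX/\FY\,*}\CO_{\FX}$; extending by the graded Leibniz rule one obtains a homotopy operator making $\varphi_{\tilde F_{0}}$ and $\varphi_{\tilde F_{1}}$ agree in $\D(\FX^{(p)})$. This is the precise compatibility needed for gluing.

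The final step is the gluing, which will be the main obstacle. One chooses an affine open covering $\{\FU_{\alpha}\}$ of $\FX^{(p)}$ on which smooth liftings $\widetilde{\FX}_{\alpha}$ and Frobenius lifts $\tilde F_{\alpha}$ exist, obtaining local morphisms of complexes $\varphi_{\alpha}$ together with homotopies $h_{\alpha\beta}$ on double intersections. Following the procedure of \cite{deill}, one assembles these data into a morphism in $\D(\FX^{(p)})$ by passing through the Čech bicomplex associated to the covering, provided the truncation at level $p$ is respected: the wedge $\wedge^{i}$ of the Leibniz rule produces correction terms that close up only up to a coboundary in degrees $< p$, whereas at degree $p$ a genuine obstruction (a $p$-th power) appears. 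The technical difficulty in our setting is to verify that all the relevant sheaf-theoretic constructions---in particular the existence of local smooth liftings, the Čech computation, and the compatibility of completions with finite direct sums and truncations---go through for non-adic smooth morphisms of noetherian formal schemes. For this we rely on the deformation theory of \cite{P} and on the finiteness and coherence properties established in \ref{prodextlocallibr} and Corollary \ref{corfrobfinitplan}, which guarantee that the local complexes are perfect of bounded amplitude, hence well-behaved under the gluing procedure.
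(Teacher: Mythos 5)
Your proposal follows the same Deligne--Illusie blueprint as the paper: construct local lifts of Frobenius, use them to produce Cartier operators $\varphi^{1}_{\widetilde{F}}$ in degree one, compare lifts via a homotopy, and glue in a \v{C}ech complex. The one place where your route genuinely diverges is in the order of the two operations ``extend to degrees $1\le i<p$'' and ``glue over a covering.'' You propose to first extend each local $\varphi^{1}_{\widetilde{F}_{\alpha}}$ to a full morphism of complexes in all degrees $<p$ by taking wedge powers, then assemble the local morphisms of complexes together with Leibniz-rule--extended homotopies across the whole \v{C}ech bicomplex; this is the mechanism in \cite{deill}. The paper (Propositions \ref{prodescomtruncadodos} and \ref{propdescomtruncadop}, following \cite{ill}) does the opposite: it glues \emph{only} the degree-one data $(\varphi^{1}_{\widetilde{F}_{\alpha}}, \phi_{\alpha\beta})$ in the \v{C}ech resolution $\chC(\{\FU_\alpha\}, F_{\FX/\FY\,*}\om^{\bullet}_{\FX/\FY})$, using the additive cocycle relation of Lemma \ref{lemadatospegado}, to obtain a single $\varphi^{1}\in\D(\FX^{(p)})$; it then manufactures $\varphi^{i}$ for $1<i<p$ globally in the derived category, as the composition of the antisymmetrization $A$ (which is where $i!^{-1}$, hence $i<p$, is used), the derived tensor power $(\varphi^{1})^{\overset{\mathbb{L}}\otimes i}$, and the product map. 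The benefit of the paper's ordering is that the \v{C}ech combinatorics is confined to a single degree and the multiplicative complications (your ``correction terms'') never have to be tracked through the \v{C}ech bicomplex; the needed flatness/local-freeness to identify $\overset{\mathbb{L}}\otimes$ with $\otimes$ is supplied by \ref{prodextlocallibr} and Corollary \ref{corfrobfinitplan}. Your ordering is also legitimate, but the verification that the Leibniz-extended homotopies form a coherent system in all degrees $<p$ is precisely the delicate point, and your write-up leaves it at the level of an assertion; if you keep your approach you should spell out the higher-homotopy (or cocycle) identities in each degree and exhibit the resulting map of bicomplexes explicitly. Either way, you should also record that the glued $\varphi^{1}$ does not depend on the chosen covering and Frobenius lifts, as the paper checks after Proposition \ref{prodescomtruncadodos}.
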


\begin{rem}
Mimicking the proof of \cite[Th\'eor\`eme 3.5]{deill} we can show a converse to the theorem, specifically, a decomposition of $\tau^{<p}(F_{\FX/\FY\,*} \om^{\bullet}_{\FX/\FY})$ provides a smooth lifting $\widetilde{\FX^{(p)}}$ of $\FX^{(p)}$ over $\widetilde{\FY}$. We leave the details to the interested reader
\end{rem}

We defer the proof of Theorem \ref{teoremadedescomposicion} to the next section. In the next few paragraphs we will present some consequences. We will start establishing some notations.

\begin{cosa}
Let $k$  be a perfect field of characteristic $p$ and put $Y= \spec(k)$. Then there exists a flat lifting  of $Y$ over $\ZZ/p^{2} \ZZ$ given (up to isomorphism) by  $\widetilde{Y} = \spec(W_{2}(k))$ where $W_{2}(k)$ is the ring of Witt vectors of length $2$ over  $k$. 
On the other hand, the absolute Frobenius  endomorphism  of $F_k = F_{Y}\colon  Y \to Y$ is an automorphism. So, given $f\colon  \FX \to Y$ a smooth morphism in $\sfn$ from the corresponding diagram (\ref{diagrfrob}) we deduce that $(F_{k})_{\FX}\colon  \FX^{(p)} \to \FX$ is an isomorphism. Then $\FX^{(p)}$ admits a smooth lifting over $\widetilde{Y}$ if, and only if, $\FX$ also  does.
\end{cosa}

\begin{cor} \label{cor1descomposicion}
Given $k$ a perfect field of characteristic $p$, let $f \colon  \FX\to Y= \spec(k)$ be a smooth  morphism  in $\sfn$. If there  exists a smooth lifting of $\FX$ over $\widetilde{Y}= \spec(W_{2}(k))$, then $\tau^{<p}(F_{\FX/k\,*}\om^{\bullet}_{\FX/k})$ is decomposable in $\D(\FX^{(p)})$.
\end{cor}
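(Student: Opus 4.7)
The plan is to deduce this corollary directly from Theorem \ref{teoremadedescomposicion}, so the only real work is to arrange the hypotheses. First, I would observe that $\widetilde{Y}=\spec(W_{2}(k))$ is a flat lifting of $Y=\spec(k)$ over $\ZZ/p^{2}\ZZ$ (the ring $W_{2}(k)$ is flat over $\ZZ/p^{2}\ZZ$ and its reduction mod $p$ is $k$), so the base for the Decomposition Theorem is in place.

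The key step is to upgrade a smooth lifting of $\FX$ to a smooth lifting of $\FX^{(p)}$ over $\widetilde{Y}$. Because $k$ is perfect, the absolute Frobenius $F_{k}$ is an automorphism, and moreover $W_{2}(k)$ carries the canonical Witt-vector Frobenius $\widetilde{F}\colon W_{2}(k)\to W_{2}(k)$ lifting $F_{k}$. Let $\widetilde{F_{Y}}\colon \widetilde{Y}\to \widetilde{Y}$ be the associated morphism, and let $\widetilde{\FX}$ be a smooth lifting of $\FX$ over $\widetilde{Y}$. I would set
\[
\widetilde{\FX^{(p)}} \;:=\; \widetilde{\FX}\times_{\widetilde{Y},\,\widetilde{F_{Y}}}\widetilde{Y}.
\]
Base change of a smooth morphism is smooth, so $\widetilde{\FX^{(p)}}\to\widetilde{Y}$ is smooth. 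Reduction modulo $p$ commutes with fiber products, and the reduction of $\widetilde{F_{Y}}$ is $F_{Y}$; hence the closed fibre of $\widetilde{\FX^{(p)}}$ over $\spec(\ZZ/p^{2}\ZZ)$ is $\FX\times_{F_{Y}}Y=\FX^{(p)}$. Thus $\widetilde{\FX^{(p)}}$ is a smooth lifting of $\FX^{(p)}$ over $\widetilde{Y}$.

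With this lifting in hand, Theorem \ref{teoremadedescomposicion} applied to $f\colon \FX\to Y$ produces the desired decomposition of $\tau^{<p}(F_{\FX/k\,*}\om^{\bullet}_{\FX/k})$ in $\D(\FX^{(p)})$. I do not anticipate any serious obstacle here; the only subtlety is the use of perfectness of $k$, which is precisely what gives both the automorphism $F_{k}$ and the Frobenius lift on $W_{2}(k)$ needed to transfer the given lifting of $\FX$ into a lifting of $\FX^{(p)}$.
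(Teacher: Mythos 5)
Your proof is correct and follows essentially the same approach as the paper (which relies on the observation in \ref{cor1descomposicion}'s preceding paragraph that, since $F_k$ is an automorphism for $k$ perfect, $\FX^{(p)}$ admits a smooth lifting over $\widetilde{Y}$ if and only if $\FX$ does). You have simply made explicit the construction the paper leaves implicit, namely lifting $F_Y$ to the Witt-vector Frobenius $\widetilde{F_Y}$ on $\spec(W_2(k))$ and taking $\widetilde{\FX^{(p)}} := \widetilde{\FX}\times_{\widetilde{Y},\,\widetilde{F_Y}}\widetilde{Y}$.
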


\begin{rem}
 This corollary generalizes \cite[Th\'eor\`eme 2.1]{deill} to the context of formal schemes.
\end{rem}

\begin{cor} \label{cor2descomposicion}
Given a perfect field $k$ of characteristic $p$, let $f \colon  Z \to Y= \spec(k)$ be a morphism of finite type in $\sch$ and suppose that $Z$ is embeddable in a smooth $Y$-scheme  $X$. If  there exists a smooth lifting of $\widehat{X}:=X_{/Z}$ over $\widetilde{Y}= \spec(W_{2}(k))$, then $\tau^{<p}(F_{\widehat{X}/k\,*} \om^{\bullet}_{\widehat{X}/k})$ is decomposable in $\D(\widehat{X}^{(p)})$. 
\end{cor}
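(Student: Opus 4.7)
The plan is to reduce immediately to Corollary \ref{cor1descomposicion} applied to $\widehat{X} \to Y$, so the only real work is checking that $\widehat{X} \to Y$ is smooth as a morphism in $\sfn$.

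First I would verify that the completion morphism $\hf\colon \widehat{X} \to Y$ is smooth in $\sfn$. It is of pseudo finite type because $f\colon X \to Y$ is of finite type and completion along a closed subscheme does not change the underlying topological space (so any defining ideal of $\widehat{X}$ sits over an ideal of definition of $Y$ compatibly with the finite type structure of $X$). To check the lifting property, consider an affine $Y$-scheme $T$, a closed subscheme $T_0 \inc T$ defined by a square-zero Ideal, and a $Y$-morphism $T_0 \to \widehat{X}$. Such a morphism is the same as a morphism $T_0 \to X$ whose image lies set-theoretically in $Z$. By smoothness of $X/Y$ in $\sch$ this lifts to a morphism $T \to X$, and since $T$ and $T_0$ share the same underlying topological space, this lift also factors set-theoretically through $Z$, hence through $\widehat{X} = X_{/Z}$. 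Thus $\hf$ satisfies the lifting criterion and is smooth.

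Second, by hypothesis we are given a smooth lifting of $\widehat{X}$ over $\widetilde{Y} = \spec(W_{2}(k))$. Since $k$ is perfect, as recalled just before Corollary \ref{cor1descomposicion}, this is equivalent to the existence of a smooth lifting of $\widehat{X}^{(p)}$ over $\widetilde{Y}$ (the absolute Frobenius of $Y$ is an automorphism, so $(F_{k})_{\widehat{X}}\colon \widehat{X}^{(p)} \to \widehat{X}$ is an isomorphism).

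Third, I would apply Corollary \ref{cor1descomposicion} directly to the smooth morphism $\hf\colon \widehat{X} \to Y$, using the given lifting. That corollary then yields a decomposition of $\tau^{<p}(F_{\widehat{X}/k\,*}\om^{\bullet}_{\widehat{X}/k})$ in $\D(\widehat{X}^{(p)})$, which is the desired conclusion. There is no substantive obstacle here; the only point requiring any argument is the stability of smoothness under formal completion, which is immediate from the definition of smoothness in $\sfn$ as in \cite{AJP1} because the topological lifting criterion is preserved verbatim.
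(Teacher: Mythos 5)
Your proposal is correct and follows the route the paper itself intends: show that $\widehat{X}\to Y$ is smooth in $\sfn$ (the paper invokes \cite[Proposition 3.10]{AJP2} for exactly this, as it does in the proof of Corollary \ref{cor3descomposicion}) and then apply Corollary \ref{cor1descomposicion}; note that your middle paragraph passing to $\widehat{X}^{(p)}$ is redundant, since Corollary \ref{cor1descomposicion} already takes a smooth lifting of $\FX$ itself, not of $\FX^{(p)}$, as its hypothesis. One point worth making explicit in your smoothness check: passing from ``the lift $T\to X$ lands set-theoretically in $Z$'' to ``it factors through $\widehat{X}$'' requires the pullback of $\CI_Z$ to $\CO_T$ to be nilpotent; this holds because the nilradical of the noetherian ring $\CO_T$ is nilpotent (which is supplied by the paper's standing noetherianity assumptions), but as phrased your argument conflates set-theoretic containment with the actual nilpotency condition that is needed.
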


\begin{cor} \label{cor3descomposicion}
Given a perfect field $k$ of characteristic $p$, let $Z$ be a projective $k$-scheme  embeddable in $\PR:=\PR^{n}_{k}$ and let $\HPR :=\PR_{/Z}$. Then $\tau^{<p}(F_{\HPR/k\,*}  \om^{\bullet}_{\HPR/k})$ is decomposable in $\D(\HPR^{(p)})$. 
\end{cor}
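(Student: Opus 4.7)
The plan is to reduce immediately to Corollary \ref{cor2descomposicion} by taking $X := \PR = \PR^{n}_{k}$ as the smooth $Y$-scheme containing $Z$. What needs to be produced is a smooth lifting of $\HPR = \PR_{/Z}$ over $\widetilde{Y} = \spec(W_{2}(k))$.

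The natural candidate is $\widetilde{\HPR} := (\PR^{n}_{W_{2}(k)})_{/Z}$, the formal completion of $\PR^{n}_{W_{2}(k)}$ along $Z$ (viewing $Z$ as a closed subscheme of $\PR^{n}_{W_{2}(k)}$ through the closed immersion $\PR^{n}_{k} \inc \PR^{n}_{W_{2}(k)}$ given by $V(p)$). First I would note that $\PR^{n}_{W_{2}(k)} \to \widetilde{Y}$ is smooth in the classical sense, as $\PR^{n}$ lifts canonically over $\ZZ$. Then I would invoke the stability of smoothness under formal completion along a closed subscheme: the canonical morphism $(\PR^{n}_{W_{2}(k)})_{/Z} \to \widetilde{Y}$ is smooth as a morphism in $\sfn$. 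This is essentially formal from the definition of smoothness recalled in Section \ref{sec1}, since $\PR^{n}_{W_{2}(k)} \to \widetilde{Y}$ is smooth of finite type and completion only refines the topology on the source, preserving the lifting property against square-zero thickenings of affine $\widetilde{Y}$-schemes.

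Next I need to check that $\widetilde{\HPR}$ is genuinely a lifting, i.e.\ that the cartesian square
\[
\begin{tikzpicture}[baseline=(current  bounding  box.center)]
\matrix(m)[matrix of math nodes, row sep=2.4em, column sep=2.4em,
text height=1.5ex, text depth=0.25ex]{
\HPR  & \widetilde{\HPR} \\
Y     & \widetilde{Y} \\};
\path[->,font=\scriptsize,>=angle 90]
(m-1-2) edge (m-2-2)
(m-1-1) edge (m-2-1);
\path[left hook->,font=\scriptsize,>=angle 90]
(m-1-1) edge (m-1-2)
(m-2-1) edge (m-2-2);
\end{tikzpicture}
\]
is cartesian. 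This comes down to the identification $(\PR^{n}_{W_{2}(k)})_{/Z} \times_{\widetilde{Y}} Y \cong (\PR^{n}_{k})_{/Z} = \HPR$. It holds because any Ideal of definition $\CI$ of $Z$ in $\PR^{n}_{W_{2}(k)}$ contains $p$, so reducing modulo $p$ gives an Ideal of definition of $Z$ in $\PR^{n}_{k}$, and the $\CI$-adic completion is compatible with this quotient (using \ref{lim} together with the compatibility of fibre products and inverse limits in $\sfn$ via $f = \dirlim{} f_{\ell}$).

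Once $\widetilde{\HPR}$ has been shown to be a smooth lifting of $\HPR$ over $W_{2}(k)$, Corollary \ref{cor2descomposicion} applies verbatim and yields the decomposition of $\tau^{<p}(F_{\HPR/k\,*} \om^{\bullet}_{\HPR/k})$ in $\D(\HPR^{(p)})$. The main technical point to verify, and the step I expect to be the most delicate, is the preservation of smoothness (in the non-adic sense of Section \ref{sec1}) under formal completion along a closed subscheme; everything else is bookkeeping with liftings of $\PR^{n}$.
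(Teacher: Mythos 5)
Your approach is the same as the paper's: realize the smooth lifting of $\HPR$ over $W_{2}(k)$ as the formal completion $(\PR^{n}_{W_{2}(k)})_{/Z}$ and conclude via Corollary \ref{cor2descomposicion}. The paper disposes of the key step---smoothness of $(\PR^{n}_{W_{2}(k)})_{/Z} \to \spec(W_{2}(k))$---with a single citation to \cite[Proposition 3.10]{AJP2}, which is exactly the ``delicate step'' you flag; your instinct about where the content lies is correct, but the heuristic you offer is not yet a proof.

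Asserting that ``completion only refines the topology on the source, preserving the lifting property'' skips the real point. Given an affine $\widetilde{Y}$-scheme $W$, a square-zero thickening $T \inc W$, and a morphism $T \to (\PR^{n}_{W_{2}(k)})_{/Z}$, smoothness of $\PR^{n}_{W_{2}(k)}$ over $W_{2}(k)$ does give a lift $W \to \PR^{n}_{W_{2}(k)}$, but one must still check that this lift factors through the completion, i.e., is continuous for the relevant adic topologies. That check is genuine: continuity of the given map $T \to (\PR^{n}_{W_{2}(k)})_{/Z}$ forces the pullback of the ideal of $Z$ to be nilpotent (of some index $m$) on $T$, and the square-zero condition on $T \inc W$ then bounds its nilpotency index on $W$ by $2m$, whence the lift is continuous as well. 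This is precisely what \cite[Proposition 3.10]{AJP2} packages, and it is cleaner to cite it than to claim the factorization is automatic. The remaining steps of your argument---that $Z$ sits inside $\PR^{n}_{W_{2}(k)}$ through $V(p)$, that $(\PR^{n}_{W_{2}(k)})_{/Z} \times_{\widetilde{Y}} Y \cong \HPR$ because $p$ lies in the ideal of $Z$, and the final appeal to Corollary \ref{cor2descomposicion}---are sound and match the paper.
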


\begin{proof}
Since $\PR^{n}_{W_{2}(k)}=\PR \times_{k}\spec(W_{2}(k))$, $Z$ is also a  closed subscheme of $\PR^{n}_{W_{2}(k)}$. If $\kk \colon \HPR^{n}_{W_{2}(k)} \to \PR^{n}_{W_{2}(k)}$ is the completion morphism of $\PR^{n}_{W_{2}(k)}$ along $Z$, then by  \cite[Proposition 3.10]{AJP2} it is immediate that the composition
\[
 \HPR^{n}_{W_{2}(k)} \overset{\kk}\lto \PR^{n}_{W_{2}(k)} \lto \spec(W_{2}(k))
\]
is a smooth lifting of $\HPR$ over $\spec(W_{2}(k))$.
\end{proof}

\section{Proof of the Decomposition Theorem}

The proof of Theorem \ref{teoremadedescomposicion} will be decomposed into several intermediate steps. We will mostly follow the strategy of the proof of the Decomposition Theorem for usual schemes in \cite[\S5]{ill}.

\begin{cosa} \label{intrdemosdesc}
Recall that a decomposition of $\tau^{<p}(F_{\FX/\FY\,*} \om^{\bullet}_{\FX/\FY})$ is equivalent to give a morphism in $\D(\FX^{(p)})$
\begin{equation*}
\bigoplus_{i<p} \CH^{i} (F_{\FX/\FY\,*}  \om^{\bullet}_{\FX/\FY} [-i])  \lto  F_{\FX/\FY\,*}  \om^{\bullet}_{\FX/\FY}
\end{equation*} 
that induces the identity through the functor $\CH^{i}$ for all $i<p$. By Theorem  \ref{teorisocar} it is sufficient to give a morphism in $\D(\FX^{(p)})$ 
\begin{equation} \label{isodescomcart}
\bigoplus_{i<p}  \om^{i}_{\FX^{(p)}/\FY} [-i]  \lto  F_{\FX/\FY\,*}  \om^{\bullet}_{\FX/\FY}
\end{equation}
that  coincides in  homology with  the Cartier isomorphism. 

We will associate a morphism as (\ref{isodescomcart}) to each smooth  lifting $\widetilde{\FX^{(p)}}$ of $\FX^{(p)}$ over $\widetilde{\FY}$.
The proof proceeds in two stages:
\begin{enumerate}
 \item First we show (in Proposition \ref{teoremdescompslevanglobal}) that if there exists a 
 \emph{global} lifting of Frobenius, \ie\/a  $\widetilde{\FY}$-morphism 
 \[
 \widetilde{F} \colon  \widetilde{\FX} \to \widetilde{\FX^{(p)}}
 \]
 that lifts $F_{\FX/\FY}$  (see \ref{defFlevantaaF0}), then the complex $\tau^{<p}(F_{\FX/\FY\,*} \om^{\bullet}_{\FX/\FY})$ is decomposable in $\D(\FX^{(p)})$ by constructing a lifting of the Cartier operator, see \ref{deffisuper1}.
 \item Liftings of Frobenius only exist locally, this is discussed in \ref{localglobal}. With this, we see (in Proposition \ref{prodescomtruncadodos}) that $\tau^{\leq 1}(F_{\FX/\FY\,*} \om^{\bullet}_{\FX/\FY})$ is decomposable in $\D(\FX^{(p)})$ by pasting these local liftings.
Finally, we extend this decomposition to the whole $\tau^{<p}(F_{\FX/\FY\,*} \om^{\bullet}_{\FX/\FY})$ using the multiplicative structure of the De Rham complex (Proposition \ref{propdescomtruncadop}).
\end{enumerate}
\end{cosa}

We start by fixing some notations and definitions.

\begin{cosa} \label{cosaisooyopoy} \textbf{Two canonical isomorphisms}.
Let  $i \colon \FX  \inc \widetilde{\FX}$ be a smooth lifting  over $\widetilde{\FY}$. 
From the short exact sequence of $(\ZZ/p^{2} \ZZ)$-modules 
\[
0 \to p \cdot \ZZ/p^{2} \ZZ\to \ZZ/p^{2} \ZZ \to \mathbb{F}_{p} \to 0
\]
we deduce  that the sequence  of $\CO_{\widetilde{\FX}}$-modules
\begin{equation}\label{s1}
0 \lto p \cdot \CO_{\widetilde{\FX}}\lto \CO_{\widetilde{\FX}} \lto i_{*}(\CO_\FX) \lto 0
\end{equation}
is exact and  therefore  $i$ is a closed embedding given  by  the ideal $p \cdot \CO_{\widetilde{\FX}} \subset \CO_{\widetilde{\FX}}$.

The isomorphism $p \cdot \colon \mathbb{F}_{p} \to p \cdot \ZZ/p^{2} \ZZ$ of $(\ZZ/p^{2} \ZZ)$-modules induces the isomorphism of $\CO_{\widetilde{\FX}}$-Modules
\begin{equation} \label{iso1}
\begin{array}{ccc}
\mathbf{p}^0 \colon i_{*}(\CO_{\FX})& \lto &p\cdot \CO_{\widetilde{\FX}}\\
\end{array}
\end{equation}
locally determined by  $a + p \cdot \CO_{\widetilde{\FX}}  \leadsto  p \cdot a$.
Since $\om^{1}_{\widetilde{\FX}/\widetilde{\FY}}$ is a locally free $\CO_{\widetilde{\FX}}$-Module (see \cite[Proposition 2.6.1]{LNS}) applying the functor $- \otimes_{\CO_{\widetilde{\FX}}} \om^{1}_{\widetilde{\FX}/\widetilde{\FY}}$ to the  sequence (\ref{s1})  and to the isomorphism (\ref{iso1}) we obtain  the short exact sequence of $\CO_{\widetilde{\FX}}$-Modules
\begin{equation}\label{s2}
0 \lto p \cdot \om^{1}_{\widetilde{\FX}/\widetilde{\FY}} 
\lto \om^{1}_{\widetilde{\FX}/\widetilde{\FY}} 
\lto i_{*}(\CO_{\FX}) \otimes_{\CO_{\widetilde{\FX}}} \om^{1}_{\widetilde{\FX}/\widetilde{\FY}}  \lto 0
\end{equation}
and the isomorphism of  $\CO_{\widetilde{\FX}}$-Modules
\begin{equation} \label{iso2}
\begin{array}{cccc}
\mathbf{p}^1 \colon  i_{*}(\CO_{\FX}) \otimes_{\CO_{\widetilde{\FX}}} \om^{1}_{\widetilde{\FX}/\widetilde{\FY}}& \lto &p\cdot \om^{1}_{\widetilde{\FX}/\widetilde{\FY}}\\
\end{array}
\end{equation}
Observe that the isomorphism $\mathbf{p}^1$ is locally  defined by $1 \otimes \hd(s)  \leadsto  p \cdot \hd(s)$.
\end{cosa}

\begin{cosa} \label{defFlevantaaF0} \textbf{Liftings of Frobenius}.
From now on we will assume the set-up and hypotheses of Theorem \ref{teoremadedescomposicion}.
Given $F_{\FX/\FY}\colon  \FX \to \FX^{(p)}$ the relative Frobenius morphism of $\FX$ over $\FY$ let us  suppose  that there exist $i \colon \FX \inc \widetilde{\FX}$ and $i' \colon \FX^{(p)} \inc \widetilde{\FX^{(p)}}$ smooth liftings over $\widetilde{\FY}$. We say that  a $\widetilde{\FY}$-morphism $\widetilde{F} \colon  \widetilde{\FX} \to \widetilde{\FX^{(p)}}$  \emph{is a lifting\footnote{According to the terminology established in \cite[\S 2]{P} we would  say that $\widetilde{F}$ is a lifting of $\FX \xto{F_{\FX/\FY}} \FX^{(p)} \overset{i'}\inc \widetilde{\FX^{(p)}}$ over $\widetilde{\FY}$.} of $F_{\FX/\FY}$} if the following diagram is commutative
\begin{equation}\label{ccarpropdescom}
\begin{tikzpicture}[baseline=(current  bounding  box.center)]
\matrix(m)[matrix of math nodes, row sep=3em, column sep=3em,
text height=1.5ex, text depth=0.25ex]{
\widetilde{\FX}     & \widetilde{\FX^{(p)}} \\
\FX & \FX^{(p)} \\}; 
\path[->,font=\scriptsize,>=angle 90]
(m-1-1) edge node[auto] {$\widetilde{F}$} (m-1-2)
(m-2-1) edge node[auto] {$F_{\FX/\FY}$} (m-2-2);
\path[left hook->,font=\scriptsize,>=angle 90]
(m-2-1) edge node[auto] {$i$} (m-1-1)
(m-2-2) edge node[right] {$i'$} (m-1-2);
\end{tikzpicture}
\end{equation} 
Observe that, since $\FX \cong \widetilde{\FX} \times_{\widetilde{\FY}}  \FY$ and $\FX^{(p)} \cong \widetilde{\FX^{(p)}}\times_{\widetilde{\FY}}  \FY$ we have  that the square (\ref{ccarpropdescom}) is cartesian. 
\end{cosa}

\begin{lem}\label{restrilevanapnula}
The image of the canonical morphism   
\[
\om^{1}_{\widetilde{\FX^{(p)}}/\widetilde{\FY}} \lto \widetilde{F}_{*} \om^{1}_{\widetilde{\FX}/\widetilde{\FY}}
\]
 is contained in $p \cdot (\widetilde{F}_{*} \om^{1}_{\widetilde{\FX}/\widetilde{\FY}})$.
\end{lem}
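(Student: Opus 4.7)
The plan is to reduce this to the fact (already noted in \ref{relativonulo}) that the Frobenius pullback of differentials vanishes. The canonical morphism in question is adjoint to the pullback map $\widetilde{F}^{*}\om^{1}_{\widetilde{\FX^{(p)}}/\widetilde{\FY}} \to \om^{1}_{\widetilde{\FX}/\widetilde{\FY}}$, so locally it sends $\hd \tilde{s}$ to $\hd(\widetilde{F}^{\sharp}(\tilde{s}))$ for $\tilde{s}$ a local section of $\CO_{\widetilde{\FX^{(p)}}}$. Since the question is local on $\widetilde{\FX^{(p)}}$ and the image of the map is generated (over $\CO_{\widetilde{\FX^{(p)}}}$) by such elements, it suffices to show that each $\hd(\widetilde{F}^{\sharp}(\tilde{s}))$ belongs to $p\cdot \om^{1}_{\widetilde{\FX}/\widetilde{\FY}}$.

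The key tool is the short exact sequence \eqref{s2} of \ref{cosaisooyopoy}: since $\om^{1}_{\widetilde{\FX}/\widetilde{\FY}}$ is locally free, the subsheaf $p\cdot \om^{1}_{\widetilde{\FX}/\widetilde{\FY}}$ is exactly the kernel of the reduction-mod-$p$ map $\om^{1}_{\widetilde{\FX}/\widetilde{\FY}} \to i_{*}(\CO_{\FX}) \otimes_{\CO_{\widetilde{\FX}}} \om^{1}_{\widetilde{\FX}/\widetilde{\FY}} \cong i_{*}\om^{1}_{\FX/\FY}$, the last isomorphism coming from the fact that $i$ is a smooth lifting. So I only need to verify that the reduction modulo $p$ of $\hd(\widetilde{F}^{\sharp}(\tilde{s}))$ vanishes in $\om^{1}_{\FX/\FY}$.

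Because $\widetilde{F}$ is a lifting of $F_{\FX/\FY}$ in the sense of \eqref{ccarpropdescom}, the commutativity of that square gives $\widetilde{F}^{\sharp}(\tilde{s}) \bmod p = F_{\FX/\FY}^{\sharp}(\bar{s})$, where $\bar{s}=\tilde{s}\bmod p$ is the corresponding local section of $\CO_{\FX^{(p)}}$. Consequently, the reduction mod $p$ of $\hd(\widetilde{F}^{\sharp}(\tilde{s}))$ is $\hd(F_{\FX/\FY}^{\sharp}(\bar{s}))$, which is nothing but the image of $\hd \bar{s}$ under the canonical morphism $\om^{1}_{\FX^{(p)}/\FY} \to F_{\FX/\FY\,*}\om^{1}_{\FX/\FY}$. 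By adjunction this is the map corresponding to $F_{\FX/\FY}^{*}\om^{1}_{\FX^{(p)}/\FY} \to \om^{1}_{\FX/\FY}$, which is zero by \ref{relativonulo}. Hence the reduction vanishes and $\hd(\widetilde{F}^{\sharp}(\tilde{s})) \in p\cdot \om^{1}_{\widetilde{\FX}/\widetilde{\FY}}$.

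No step looks truly hard; the only subtlety is to be careful that the identification of the quotient $\om^{1}_{\widetilde{\FX}/\widetilde{\FY}}/p\om^{1}_{\widetilde{\FX}/\widetilde{\FY}}$ with $i_{*}\om^{1}_{\FX/\FY}$ is compatible with the canonical map along $\widetilde{F}$ reducing to the canonical map along $F_{\FX/\FY}$, which follows functorially from (\ref{ccarpropdescom}) together with the base change isomorphism for $\om^{1}$ recalled in \ref{cbasepei}.
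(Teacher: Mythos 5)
Your proof is correct and follows essentially the same strategy as the paper's: reduce the canonical morphism modulo $p$, observe that it becomes the canonical map $\om^{1}_{\FX^{(p)}/\FY} \to F_{\FX/\FY\,*}\om^{1}_{\FX/\FY}$, and invoke \ref{relativonulo} to conclude it vanishes. The only presentational difference is that you argue with local generators $\hd(\widetilde{F}^{\sharp}(\tilde{s}))$ and the short exact sequence \eqref{s2} on the $\widetilde{\FX}$ side, whereas the paper tensors globally with $i'_{*}\CO_{\FX^{(p)}}$ on the $\widetilde{\FX^{(p)}}$ side and uses the projection formula to make the identification in one stroke.
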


\begin{proof}

Indeed, the morphism of $\CO_{\widetilde{\FX^{(p)}}}$-Modules 
\[
 i'_{*}\CO_{\FX^{(p)}}\otimes_{\CO_{\widetilde{\FX^{(p)}}}} \om^{1}_{\widetilde{\FX^{(p)}}/\widetilde{\FY}}    \lto 
 i'_{*}\CO_{\FX^{(p)}}\otimes_{\CO_{\widetilde{\FX^{(p)}}}} \widetilde{F}_{*} \om^{1}_{\widetilde{\FX}/\widetilde{\FY}}
 \]
 corresponds through the projection formula \cite[\textbf{0}, (5.4.8)]{EGA1} to
 \[
i'_{*}\om^{1}_{\FX^{(p)}/\FY}  \lto 
i'_{*}F_{\FX/\FY\,*} \om^{1}_{\FX/\FY} \cong
\widetilde{F}_{*}i_{*}\om^{1}_{\FX/\FY},
\]
and this map is zero by \ref{relativonulo}. We conclude since $i'_{*}\CO_{\FX^{(p)}} = \CO_{\widetilde{\FX^{(p)}}}/p\cdot \CO_{\widetilde{\FX^{(p)}}}$.
\end{proof}

\begin{cosa} \label{deffisuper1} \textbf{A Cartier operator.}
Under the hypotheses and notations of \ref{defFlevantaaF0}, applying $i'^{*}$ to the canonical morphism   $\om^{1}_{\widetilde{\FX^{(p)}}/\widetilde{\FY}} \to \widetilde{F}_{*} \om^{1}_{\widetilde{\FX}/\widetilde{\FY}}$ we have that there  exists an unique morphism of $\CO_{\FX^{(p)}}$-Modules  
\begin{equation}\label{cartphi}
 \varphi_{\widetilde{F}}^{\, 1}\colon  \om^{1}_{\FX^{(p)}/\FY}  \lto 
 F_{\FX/\FY\,*} \om^{1}_{\FX/\FY}
\end{equation}
 such that the following diagram is commutative
\begin{equation}\label{diagradeffi1}
\begin{tikzpicture}[baseline=(current  bounding  box.center)]
\matrix(m)[matrix of math nodes, row sep=3em, column sep=3em,
text height=2ex, text depth=1.5ex]{
i'^{*}\om^{1}_{\widetilde{\FX^{(p)}}/\widetilde{\FY}} & p \cdot i'^{*} \widetilde{F}_{*} \om^{1}_{\widetilde{\FX}/\widetilde{\FY}} \\
\om^{1}_{\FX^{(p)}/\FY} & F_{\FX/\FY\,*} \om^{1}_{\FX/\FY}         \\};
\path[->,font=\scriptsize,>=angle 90]
(m-1-1) edge node[auto] {can.} (m-1-2)
(m-1-2) edge (m-2-2)
(m-2-1) edge node[auto] {$\varphi_{\widetilde{F}}^{\, 1}$} (m-2-2)
        edge node[left] {$\wr$} (m-1-1.south);
\end{tikzpicture}
\end{equation}
where the left vertical isomorphism is given by  base-change (see  \cite[Proposition 3.7]{AJP1}), and the right vertical morphism corresponds to the isomorphism  
\[
 p \cdot \widetilde{F}_{*}\om^{1}_{\widetilde{\FX}/\widetilde{\FY}}
 \xto{(\widetilde{F}_{*}\mathbf{p}^1)^{-1}} 
 \widetilde{F}_{*}(i'_{*}\CO_{\FX} \otimes_{\CO_{\widetilde{\FX}}} \om^{1}_{\widetilde{\FX}/\widetilde{\FY}}) 
 \cong(\widetilde{F} \circ i)_{*} \om^{1}_{\FX/\FY}
 \] 
through the adjoint pair $i'^{*} \dashv i'_{*}$. Let us call $\varphi_{\widetilde{F}}^{\, 1}$ the Cartier operator.

Let us  give a local description of the morphism $\varphi_{\widetilde{F}}^{\, 1}$. For that,
assume that we are in $\sfna$ and set $\FY=\spf(B)$, $\widetilde{\FY}=\spf(\widetilde{B})$, $\FX=\spf(A)$, $\widetilde{\FX}=\spf(\widetilde{A})$,  $\FX^{(p)}=\spf(A^{(p)})$ and  $\widetilde{\FX^{(p)}}=\spf(\widetilde{A^{(p)}})$ with $A=\widetilde{A}/p\widetilde{A}$ and $A^{(p)}=\widetilde{A^{(p)}}/p\widetilde{A^{(p)}}$. Now, given  $a=a_1 + p \cdot \widetilde{A}$ with $a_1 \in \widetilde{A}$ and  $a_2 \in \widetilde{A^{(p)}}$ such that $a \otimes 1=a_2+ p \cdot \widetilde{A^{(p)}} $, since $F(a \otimes 1)=  a^{p}$ (see \ref{frobrelafin}) from the  conmutativity of  diagram (\ref{ccarpropdescom}) we deduce that
\[
F(a_2)= a_1^{p}+p\cdot c_1
\] 
with $c_1\in \widetilde{A}$.
From this we deduce that $\varphi_{\widetilde{F}}^{\, 1}$ is locally given by  
\[
\hd(a) \otimes 1   \leadsto a^{p-1} \hd(a) + \hd(c)
\]
 where $c=c_1+p\cdot \widetilde{A}$.
\end{cosa}

\begin{lem}\label{varfiocar}
In the setting of  \ref{defFlevantaaF0}, the Cartier operator $\varphi_{\widetilde{F}}^{\, 1}$ 
defined in (\ref{cartphi}) induces in homology the Cartier isomorphism in degree $1$. 
\end{lem}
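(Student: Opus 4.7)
The plan is to compare $\varphi_{\widetilde{F}}^{\,1}$ with $\gamma^{1}$ on generators of $\om^{1}_{\FX^{(p)}/\FY}$. Both maps are $\CO_{\FX^{(p)}}$-linear morphisms of coherent Modules (coherence by \ref{lemderhamcoherente} and \ref{corfrobfinitplan}), and $\om^{1}_{\FX^{(p)}/\FY}$ is locally generated by elements of the form $\hd(a)\otimes 1$ in the image of the canonical derivation. By the uniqueness part of the universal property of the Module of $1$-differentials recalled from \cite[Theorem 3.5]{AJP1} (which is the same property used in \ref{ImdirfrobcomRhamcoh} to build $\gamma^{1}$), it suffices to verify the identity on such generators, locally.

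First I would check that $\varphi_{\widetilde{F}}^{\,1}$ actually takes values in the cycle subsheaf $\CZ^{1}(F_{\FX/\FY\,*}\om^{\bullet}_{\FX/\FY})$, so that composing with the projection to $\CH^{1}$ makes sense. Using the local description at the end of \ref{deffisuper1},
\[
\varphi_{\widetilde{F}}^{\,1}(\hd(a)\otimes 1) = a^{p-1}\hd(a) + \hd(c),
\]
this reduces to noting that $\hd(a^{p-1})\wedge \hd(a) = (p-1)\,a^{p-2}\,\hd(a)\wedge \hd(a)=0$ in characteristic $p$, and that $\hd^{\,1}\circ \hd = 0$ kills the second summand.

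Next, I would compute the class in $\CH^{1}(F_{\FX/\FY\,*}\om^{\bullet}_{\FX/\FY})$ directly from the same local formula:
\[
[\varphi_{\widetilde{F}}^{\,1}(\hd(a)\otimes 1)] \;=\; [a^{p-1}\hd(a) + \hd(c)] \;=\; [a^{p-1}\hd(a)],
\]
the last equality because $\hd(c)$ is a coboundary. By the construction in \ref{ImdirfrobcomRhamcoh}, the class $[a^{p-1}\hd(a)]$ is precisely $\gamma^{1}(\hd(a)\otimes 1)$, which yields the asserted equality on generators and therefore globally.

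No serious obstacle is expected. The only point requiring care is that the local expression for $\varphi_{\widetilde{F}}^{\,1}$ obtained from diagram (\ref{diagradeffi1}) depends on a choice of lifts $a_{1}\in\widetilde{A}$ of $a$ and $a_{2}\in\widetilde{A^{(p)}}$ of $a\otimes 1$, and on the identifications $\mathbf{p}^{0}$, $\mathbf{p}^{1}$ of \ref{cosaisooyopoy}; but this bookkeeping has already been done in \ref{deffisuper1}, and once the equation $F(a_{2})=a_{1}^{p}+p\cdot c_{1}$ is in place, the verification collapses to the computation above.
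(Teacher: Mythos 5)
Your proof is correct and follows the same route as the paper's: reduce to the local description $\varphi_{\widetilde{F}}^{\,1}(\hd(a)\otimes 1)=a^{p-1}\hd(a)+\hd(c)$ from \ref{deffisuper1}, observe the image lands in $\CZ^{1}$, drop the coboundary $\hd(c)$, and match the resulting class $[a^{p-1}\hd(a)]$ with $\gamma^{1}(\hd(a)\otimes 1)$. You have simply written out explicitly the verifications that the paper's two-line argument leaves to the reader.
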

\begin{proof}
From the local description of $\varphi_{\widetilde{F}}^{\, 1}$ just given, 
we  deduce that $\Img (\varphi_{\widetilde{F}}^{\, 1}) \subset \CZ^{1} F_{\FX/\FY\,*}\om^{\, \bullet}_{\FX/\FY}$ and that the composition of morphisms  
\[
\om^{1}_{\FX^{(p)}/\FY} 
\xto{\varphi_{\widetilde{F}}^{\, 1}}  \CZ^{1} (F_{\FX/\FY\,*}\om^{\, \bullet}_{\FX/\FY})
\xepi{\quad} \CH^{1} (F_{\FX/\FY\,*}\om^{\, \bullet}_{\FX/\FY})
\]  
is $\gamma^{1}$, the Cartier isomorphism  (\ref{ecisocart}) in degree $1$.
\end{proof}

\begin{prop} \label{teoremdescompslevanglobal}
Suppose that there exists a $\widetilde{\FY}$-morphism $\widetilde{F}$ that lifts  $F_{\FX/\FY}$.
Then there exist a morphism in the category of complexes of objects in $\CA(\FX^{(p)})$ 
\[
\varphi_{\widetilde{F}} \colon \bigoplus_{i<p}  \om^{i}_{\FX^{(p)}/\FY} [-i]  \lto  F_{\FX/\FY\,*}  \om^{\bullet}_{\FX/\FY}
\]
that induces the Cartier isomorphism  (\ref{ecisocart}) in $\CH^{i}$, for all $i<p$, such that $\varphi^{0}_{\widetilde{F}} = F_{\FX/\FY}^\sharp$ and the morphism $\varphi^{1}_{\widetilde{F}}$ is the one defined in \ref{deffisuper1}.
\end{prop}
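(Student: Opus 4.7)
The plan is to extend the Cartier operator $\varphi_{\widetilde{F}}^{\, 1}$ multiplicatively, exploiting the supercommutative graded $\CO_{\FX^{(p)}}$-algebra structure on $F_{\FX/\FY\,*}\om^{\bullet}_{\FX/\FY}$ (induced by wedge product; cf.\ \ref{cbasepei}) together with the identification $\omi_{\FX^{(p)}/\FY} = \pei \om^{1}_{\FX^{(p)}/\FY}$. First I would set $\varphi_{\widetilde{F}}^{\, 0} := F_{\FX/\FY}^{\sharp}$ and, for $2 \le i < p$, define $\varphi_{\widetilde{F}}^{\, i}$ on decomposable local sections by
\[
\varphi_{\widetilde{F}}^{\, i}(\omega_{1} \wedge \cdots \wedge \omega_{i}) := \varphi_{\widetilde{F}}^{\, 1}(\omega_{1}) \wedge \cdots \wedge \varphi_{\widetilde{F}}^{\, 1}(\omega_{i}),
\]
where the right-hand wedge is the product in the target algebra.

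To check that $\varphi_{\widetilde{F}}^{\, i}$ is well defined, I would verify that the multilinear map $(\omega_{1}, \ldots, \omega_{i}) \leadsto \varphi_{\widetilde{F}}^{\, 1}(\omega_{1}) \wedge \cdots \wedge \varphi_{\widetilde{F}}^{\, 1}(\omega_{i})$ is $\CO_{\FX^{(p)}}$-multilinear (since $\varphi_{\widetilde{F}}^{\, 1}$ is $\CO_{\FX^{(p)}}$-linear and the wedge on $F_{\FX/\FY\,*}\om^{\bullet}_{\FX/\FY}$ is bilinear over $\CO_{\FX^{(p)}}$ through $F_{\FX/\FY}^{\sharp}$) and alternating (each $\varphi_{\widetilde{F}}^{\, 1}(\omega_{j})$ sits in degree $1$ of a supercommutative algebra, hence squares to zero). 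The universal property of exterior powers then produces a unique morphism of $\CO_{\FX^{(p)}}$-Modules $\varphi_{\widetilde{F}}^{\, i}\colon \omi_{\FX^{(p)}/\FY} \to F_{\FX/\FY\,*}\om^{i}_{\FX/\FY}$. The proof of Lemma \ref{varfiocar} shows that $\Img(\varphi_{\widetilde{F}}^{\, 1}) \subset \CZ^{1}(F_{\FX/\FY\,*}\om^{\bullet}_{\FX/\FY})$, and the Leibniz rule recalled in \ref{existyunicomderham} propagates this to $\Img(\varphi_{\widetilde{F}}^{\, i}) \subset \CZ^{i}(F_{\FX/\FY\,*}\om^{\bullet}_{\FX/\FY})$. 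Viewing $\bigoplus_{i<p} \omi_{\FX^{(p)}/\FY}[-i]$ as a complex with zero differential, this says exactly that $\varphi_{\widetilde{F}} := \bigoplus_{i<p} \varphi_{\widetilde{F}}^{\, i}$ is a morphism of complexes of objects in $\CA(\FX^{(p)})$.

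For the cohomological statement, composing with the projection $\CZ^{\bullet} \epi \CH^{\bullet}$ yields a morphism of graded $\CO_{\FX^{(p)}}$-algebras from $\bigoplus_{i<p} \omi_{\FX^{(p)}/\FY}$ to $\bigoplus_{i<p} \CH^{i}(F_{\FX/\FY\,*}\om^{\bullet}_{\FX/\FY})$ whose components in degrees $0$ and $1$ are $\gamma^{0}$ and $\gamma^{1}$, by the choice of $\varphi_{\widetilde{F}}^{\, 0}$ and by Lemma \ref{varfiocar}. The uniqueness assertion in \ref{ImdirfrobcomRhamcoh} forces this morphism to coincide with the Cartier isomorphism $\gamma$ in every degree $i<p$, which gives the required identification on $\CH^{i}$. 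The main technical obstacle I anticipate is keeping track of $\CO_{\FX^{(p)}}$-linearity throughout: the differentials of $F_{\FX/\FY\,*}\om^{\bullet}_{\FX/\FY}$ are only $f^{(p)\,-1}\CO_{\FY}$-linear, so it is essential to exploit that the wedge product \emph{is} $\CO_{\FX^{(p)}}$-bilinear (via $F_{\FX/\FY}^{\sharp}$) in order for the universal property of $\pei$ to apply and yield a morphism of $\CO_{\FX^{(p)}}$-Modules rather than of mere sheaves of abelian groups.
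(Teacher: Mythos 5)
Your proposal is correct and takes essentially the same approach as the paper: the paper's proof also defines $\varphi^{\, i}_{\widetilde{F}}$ for $1 < i < p$ as the composition $\omi_{\FX^{(p)}/\FY} = \wedge^{i}\om^{1}_{\FX^{(p)}/\FY} \xto{\wedge^{i}\varphi^{\,1}_{\widetilde{F}}} \wedge^{i} F_{\FX/\FY\,*}\om^{1}_{\FX/\FY} \to F_{\FX/\FY\,*}\om^{i}_{\FX/\FY}$ and then invokes Lemma \ref{varfiocar} and Theorem \ref{teorisocar}. You have spelled out the well-definedness, chain-map property, and cohomological comparison in more detail, but the construction and the key ingredients are identical.
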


\begin{proof}
By Lema \ref{varfiocar} and Theorem \ref{teorisocar}, it suffices to take  $\varphi^{\, i}_{\widetilde{F}}$ the composition of the morphisms 
\[\omi_{\FX^{(p)}/\FY} = \wedge^{i} \om^{1}_{\FX^{(p)}/\FY} \xto{\wedge^{i} \varphi^{\, 1}_{\widetilde{F}}} \wedge^{i} F_{\FX/\FY\,*} \om^{1}_{\FX/\FY} \xto{\textrm{prod.}} F_{\FX/\FY\,*} \om^{i}_{\FX/\FY} \textrm{,}
\] for all $1<i<p$.
\end{proof}

\begin{cor}
If there exists a $\widetilde{\FY}$-morphism $\widetilde{F}$ that lifts  $F_{\FX/\FY}$ then there is a decomposition of  $\tau^{<p}(F_{\FX/\FY\,*} \om^{\bullet}_{\FX/\FY})$ in $\D(\FX^{(p)})$.
\end{cor}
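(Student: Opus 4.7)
The plan is to derive this corollary as an essentially immediate consequence of Proposition \ref{teoremdescompslevanglobal} combined with the Cartier isomorphism from Theorem \ref{teorisocar}, using the universal property of the canonical truncation.

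First I would invoke Proposition \ref{teoremdescompslevanglobal} to obtain a morphism of complexes
\[
\varphi_{\widetilde{F}} \colon \bigoplus_{i<p} \om^{i}_{\FX^{(p)}/\FY}[-i] \lto F_{\FX/\FY\,*}\om^{\bullet}_{\FX/\FY}
\]
that induces the Cartier isomorphism on $\CH^{i}$ for every $i<p$. The source is a complex of coherent $\CO_{\FX^{(p)}}$-Modules with zero differential, concentrated in degrees $0,1,\ldots,p-1$.

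Next, since the source has no cohomology in degrees $\geq p$, the morphism $\varphi_{\widetilde{F}}$ factors (in $\D(\FX^{(p)})$) through the canonical truncation arrow $\tau^{<p}(F_{\FX/\FY\,*}\om^{\bullet}_{\FX/\FY}) \to F_{\FX/\FY\,*}\om^{\bullet}_{\FX/\FY}$, yielding a morphism
\[
\widetilde{\varphi}_{\widetilde{F}}\colon \bigoplus_{i<p} \om^{i}_{\FX^{(p)}/\FY}[-i] \lto \tau^{<p}(F_{\FX/\FY\,*}\om^{\bullet}_{\FX/\FY}) \quad \text{in } \D(\FX^{(p)}).
\]
On cohomology sheaves, $\widetilde{\varphi}_{\widetilde{F}}$ agrees with $\varphi_{\widetilde{F}}$ in degrees $<p$, hence it restricts to the Cartier isomorphism $\gamma^{i}$ in each of those degrees by Theorem \ref{teorisocar}; and in degrees $\geq p$ both sides vanish. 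Therefore $\widetilde{\varphi}_{\widetilde{F}}$ is a quasi-isomorphism, i.e.\ an isomorphism in $\D(\FX^{(p)})$. This is exactly a decomposition of $\tau^{<p}(F_{\FX/\FY\,*}\om^{\bullet}_{\FX/\FY})$ in the sense recalled at the beginning of Section \ref{sec4}.

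Since Proposition \ref{teoremdescompslevanglobal} has already done all the substantive work (construction of the Cartier operator $\varphi^{1}_{\widetilde{F}}$, verification that it lifts $\gamma^{1}$, and extension to higher wedge powers via the multiplicative structure), there is no real obstacle here; the only thing to be careful about is the routine fact that a morphism from a complex concentrated in degrees $<p$ into $F_{\FX/\FY\,*}\om^{\bullet}_{\FX/\FY}$ factors canonically through $\tau^{<p}$ in $\D(\FX^{(p)})$, and that the resulting morphism is a quasi-isomorphism because it induces the Cartier isomorphism in each cohomological degree.
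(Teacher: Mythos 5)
Your proof is correct and follows essentially the same route as the paper, which simply refers to \ref{intrdemosdesc} (a decomposition of $\tau^{<p}(F_{\FX/\FY\,*}\om^{\bullet}_{\FX/\FY})$ amounts to a map $\bigoplus_{i<p}\om^{i}_{\FX^{(p)}/\FY}[-i]\to F_{\FX/\FY\,*}\om^{\bullet}_{\FX/\FY}$ inducing the Cartier isomorphism on $\CH^{i}$) and then invokes Proposition~\ref{teoremdescompslevanglobal}. You have merely spelled out the factoring through $\tau^{<p}$ and the quasi-isomorphism check that the paper leaves implicit in \ref{intrdemosdesc}.
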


\begin{proof}
By \ref{intrdemosdesc} it is an immediate consequence of the proposition.
\end{proof}

\begin{cosa}\label{localglobal}
 Having dealt with the case in which there is a global lifting of Frobenius, we treat now the general case of Theorem \ref{teoremadedescomposicion}. We start by showing that the complex $\tau^{\leq 1}(F_{\FX/\FY\,*} \om^{\bullet}_{\FX/\FY})$ is decomposable in $\D(\FX^{(p)})$. For that, given an arbitrary affine open covering $\{\FU_{\alpha}\}$ of $\FX$, by  \cite[Corollary 4.3]{P} for  each $\alpha$ there exists a smooth lifting $\widetilde{\FU}_{\alpha}$ of $\FU_{\alpha}$ over $\widetilde{\FY}$. Furthermore, \cite[Corollary 2.5]{AJP1} implies that there exists a lifting $\widetilde{F}_{\alpha}\colon  \widetilde{\FU}_{\alpha} \to \widetilde{\FX^{(p)}}$ of $F_{\FX/\FY}|_{\FU_{\alpha}}\colon  \FU_{\alpha} \to \FX^{(p)} \inc \widetilde{\FX^{(p)}}$ over $\widetilde{\FY}$. We are going to  ``glue"  in $\D(\FX^{(p)})$ the morphisms  $\varphi_{\widetilde{F}_{\alpha}}$ asociated to  each  lifting $\widetilde{F}_{\alpha}$  (\emph{cf.}  Proposition \ref{teoremdescompslevanglobal}) and we will check that  does not depend of the chosen covering of $\FX$. This construction is not trivial due to the lack of the local nature of the derived category. 
\end{cosa}

We need the following lemma in which we compare the morphisms $\varphi_{\widetilde{F}}$ asociated  to different liftings $\widetilde{F}$ of $F_{\FX/\FY}$.

\begin{lem}\label{lemadatospegado}
Suppose given $\widetilde{F}_{1}\colon  \widetilde{\FX}_{1} \to \widetilde{\FX^{(p)}}$ and $\widetilde{F}_{2}\colon  \widetilde{\FX}_{2} \to \widetilde{\FX^{(p)}}$ a pair of $\widetilde{\FY}$-morphisms  that lift $F_{\FX/\FY}$, then there exists an homomorphism of $\CO_{\FX^{(p)}}$-Modules $\phi(\widetilde{F}_{1},\widetilde{F}_{2})\colon  \om^{1}_{\FX^{(p)} /\FY} \to F_{\FX/\FY\,*}\CO_{\FX}$ such that:
\begin{equation} \label{econdicpegadofi11}
\varphi_{\widetilde{F}_{1}}^{\, 1}-\varphi^{\, 1}_{\widetilde{F}_{2}}= F_{\FX/\FY\,*}\hd \circ \phi(\widetilde{F}_{1},\widetilde{F}_{2})
\end{equation}
Moreover given $\widetilde{F}_{3}\colon   \widetilde{\FX}_{3} \to \widetilde{\FX^{(p)}}$ another $\widetilde{\FY}$-morphism that lifts $F$, the cocycle condition holds, namely  
\begin{equation} \label{econdicpegadofi12}
\phi(\widetilde{F}_{1},\widetilde{F}_{2}) + \phi(\widetilde{F}_{2},\widetilde{F}_{3}) = \phi(\widetilde{F}_{1},\widetilde{F}_{3})
\end{equation}
\end{lem}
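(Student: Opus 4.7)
The plan is to work locally in the affine setting of \ref{frobrelafin} and exploit the explicit formula for the Cartier operator given at the end of \ref{deffisuper1}. Write $\FY = \spf(B)$, $\widetilde{\FY} = \spf(\widetilde B)$, $A = \widetilde A_k / p\widetilde A_k$ (the common reduction of both liftings) and $A^{(p)} = \widetilde{A^{(p)}}/p\widetilde{A^{(p)}}$. For each $k \in \{1,2\}$, a choice of lifts $a_{1,k} \in \widetilde A_k$ of $a \in A$ and $a_2 \in \widetilde{A^{(p)}}$ of $a\otimes 1 \in A^{(p)}$ yields a unique $c_{1,k} \in \widetilde A_k$ with $\widetilde F_k^{\sharp}(a_2) = a_{1,k}^{p} + p\,c_{1,k}$; let $c_k := c_{1,k} \bmod p \in A$. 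The formula from \ref{deffisuper1} reads $\varphi^{1}_{\widetilde F_k}(\hd a \otimes 1) = a^{p-1}\hd a + \hd c_k$. I propose to define locally
\[
\phi(\widetilde F_1,\widetilde F_2)(\hd a \otimes 1) := c_1 - c_2 \in A.
\]

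The substantive step is verifying that this is intrinsic and assembles into an $\CO_{\FX^{(p)}}$-linear morphism. For invariance of choices: replacing $a_{1,k}$ by $a_{1,k} + p t_k$ leaves $a_{1,k}^{p}$ unchanged modulo $p^{2}$, so $c_k$ is unaffected; replacing $a_2$ by $a_2 + p t$ shifts each $c_{1,k}$ by $\widetilde F_k^{\sharp}(t)$, whose reduction modulo $p$ is $F_{A/B}(\bar t)$---a quantity \emph{independent of} $k$ because $\widetilde F_k^{\sharp}$ reduces modulo $p$ to the common relative Frobenius---so the subtraction $c_1 - c_2$ absorbs this shift. For the derivation property: using the universal identities $(a_{1,k} + b_{1,k})^{p} = a_{1,k}^{p} + b_{1,k}^{p} + p\,Q(a_{1,k}, b_{1,k})$ with $Q \in \ZZ[x,y]$ (the same for both $k$) and $(a_{1,k} b_{1,k})^{p} = a_{1,k}^{p} b_{1,k}^{p}$, a direct computation shows after reduction that $a \leadsto c_1 - c_2$ is a continuous $B$-derivation $A^{(p)} \to A$, where $A$ carries the $A^{(p)}$-module structure via $F_{A/B}$. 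The universal property of $\om^{1}_{\FX^{(p)}/\FY}$ then produces the desired $\CO_{\FX^{(p)}}$-linear map, and the intrinsic nature of the local formula glues these together into the global morphism $\phi(\widetilde F_1,\widetilde F_2)$.

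The required identity (\ref{econdicpegadofi11}) follows at once in local form: $(\varphi^{1}_{\widetilde F_1} - \varphi^{1}_{\widetilde F_2})(\hd a \otimes 1) = \hd c_1 - \hd c_2 = F_{\FX/\FY\,*}\hd(c_1 - c_2)$. The cocycle relation (\ref{econdicpegadofi12}) is tautological, since $(c_1 - c_2) + (c_2 - c_3) = c_1 - c_3$. The main obstacle is conceptual rather than computational: the raw elements $\widetilde F_1^{\sharp}(a_2)$ and $\widetilde F_2^{\sharp}(a_2)$ live in the \emph{different} rings $\widetilde A_1$ and $\widetilde A_2$ arising from possibly non-isomorphic smooth liftings, so one cannot directly subtract them; the key insight is that their Frobenius-correction terms $c_k$, after reduction to the common ring $A$, behave compatibly under the allowed local modifications, making their difference a bona fide section of $F_{\FX/\FY\,*}\CO_{\FX}$ with the derivation property that grants existence of $\phi(\widetilde F_1, \widetilde F_2)$.
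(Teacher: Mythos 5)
Your proposal is correct but takes a genuinely different route from the paper. The paper defines $\phi(\widetilde{F}_{1},\widetilde{F}_{2})$ abstractly by deformation theory: it first assumes an auxiliary $\widetilde{\FY}$-isomorphism $\tilde u \colon \widetilde{\FX}_1 \to \widetilde{\FX}_2$ inducing the identity on $\FX$, applies \cite[2.2.(1)]{P} to the pair of liftings $\widetilde F_1$ and $\widetilde F_2 \circ \tilde u$ (of the \emph{same} map $\FX \to \widetilde{\FX^{(p)}}$) to extract a derivation $\Psi$, shows the resulting $\phi(\tilde u, \widetilde F_1, \widetilde F_2)$ is independent of $\tilde u$ (crucially using \ref{relativonulo}), and then glues over an affine covering where such $\tilde u$ exist by \cite[3.3]{P}; the identities are verified at the end by a local computation essentially identical to yours. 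You instead take the local formula from \ref{deffisuper1} as a starting point, define $\phi$ directly by the Frobenius-correction terms $c_1 - c_2$ in the common reduction $A$, and verify the derivation property by hand using the universal integral identities. Your approach buys elementariness and sidesteps entirely the need to choose, and then show independence of, the auxiliary isomorphism $\tilde u$; the paper's buys conceptual economy (continuity, $\CO$-linearity, and uniqueness come for free from \cite[2.2.(1)]{P} without element-chasing) and a framework that integrates more smoothly into the subsequent cohomological gluing in Proposition \ref{prodescomtruncadodos}. Two small points you should flesh out: (a) you need to observe that the rule $a \leadsto c_1 - c_2$ is \emph{continuous}, so that it factors through the module of continuous differentials $\om^{1}_{A^{(p)}/B}$ via \cite[Theorem 3.5]{AJP1}, and (b) gluing over intersections of two affine charts requires choosing a common refinement; your well-definedness argument does justify this, but it should be made explicit rather than appealing to intrinsicness in passing.
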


\begin{proof}
First, we are going to define $\phi(\widetilde{F}_{1},\widetilde{F}_{2})$ whenever there is a $\widetilde{\FY}$-iso\-mor\-phism $\tilde{u}\colon \widetilde{\FX}_{1}\to \widetilde{\FX}_{2}$ that induces the identity on $\FX$ (\emph{cf.}  \cite[3.4]{P}). The morphisms $\widetilde{F}_{1}$ and $\widetilde{F}_{2} \circ \tilde{u}$ are two liftings over $\widetilde{\FY}$ of the composed map 
\[
\FX \xto{F_{\FX/\FY}} \FX^{(p)} \overset{i'} \inc \widetilde{\FX^{(p)}},
\] 
and by \cite[2.2.(1)]{P} there exists an unique homomorphism of $\CO_{\widetilde{\FX^{(p)}}}$-Modules $\Psi\colon  \om^{1}_{\widetilde{\FX^{(p)}}/\widetilde{\FY}} \to \widetilde{F}_{1*}(p\cdot \CO_{\widetilde{\FX}})$ such that the diagram
\begin{equation*}
\begin{tikzpicture}[baseline=(current  bounding  box.center)]
\matrix(m)[matrix of math nodes, row sep=2.5em, column sep=3em,
text height=1.5ex, text depth=1ex]{
\CO_{\widetilde{\FX^{(p)}}}               & \om^{1}_{\widetilde{\FX^{(p)}}/\widetilde{\FY}} \\
\widetilde{F}_{1*}(p\cdot \CO_{\widetilde{\FX}}) &                    \\}; 
\path[->,font=\scriptsize,>=angle 90]
(m-1-1) edge node[auto] {$\hd'$} (m-1-2)
        edge node[left] {$\widetilde{F}_{1}^{\sharp} -(\widetilde{F}_{2} \circ \tilde{u})^{\sharp}$} (m-2-1);
\path[dashed, ->,font=\scriptsize,>=angle 90]
(m-1-2) edge node[auto] {$\Psi$} (m-2-1);
\end{tikzpicture}
\end{equation*}
is commutative. Applying $i'^{*}$ to the above  diagram we have that there exists a homomorphism of $\CO_{\FX^{(p)}}$-Modules $\phi(\tilde{u},\widetilde{F}_{1},\widetilde{F}_{2})\colon   \om^{1}_{\FX^{(p)}/Y}\to F_{\FX/\FY\,*} \CO_{\FX}$ such that the following diagram  commutes:
\begin{equation*}
\begin{tikzpicture}[baseline=(current  bounding  box.center)]
\matrix(m)[matrix of math nodes, row sep=4em, column sep=4.5em,
text height=1.5ex, text depth=0.25ex]{
i'^{*}\CO_{\widetilde{\FX^{(p)}}}     & i'^{*}\om^{1}_{\widetilde{\FX^{(p)}}/\widetilde{\FY}} & \om^{1}_{\FX^{(p)}/Y} \\
i'^{*}\widetilde{F}_{1 *} (p\cdot \CO_{\widetilde{\FX_{1}}}) & i'^{*}i'_{*}F_{\FX/\FY\,*}\CO_{\FX} 
     & F_{\FX/\FY\,*} \CO_{\FX}    \\}; 
\path[->,font=\scriptsize,>=angle 90]
(m-1-1) edge (m-1-2)
        edge node[left] {$\tau_1$} (m-2-1)
(m-1-2) edge node[above] {$i^{*}(\Psi)$} (m-2-1)
        edge node[auto] {$\sim$} (m-1-3)
(m-1-3) edge node[left] {$\phi(\tilde{u},\widetilde{F}_{1},\widetilde{F}_{2})$} (m-2-3)
(m-1-2) edge node[above] {$i^{*}(\Psi)$} (m-2-1)
(m-2-1) edge node[above] {$\tau_2$} (m-2-2)
(m-2-2) edge node[above] {nat.} (m-2-3);
\end{tikzpicture}
\end{equation*}
where $\tau_1:= i'^{*}(\widetilde{F}_{1}^{\sharp} -(\widetilde{F}_{2} \circ \tilde{u})^{\sharp})$ and $\tau_2:= i'^{*}\widetilde{F}_{1 *}((\mathbf{p}^0)^{-1})$.
Let us show that $\phi(\tilde{u},\widetilde{F}_{1},\widetilde{F}_{2})$ does not depend on $\tilde{u}$. Indeed, given $\tilde{v}\colon \widetilde{\FX}_{1}\to \widetilde{\FX}_{2}$ another $\widetilde{\FY}$-isomorphism that induces the identity on $\FX$, \cite[2.2.(1)]{P} implies that there exists an unique homomorphism of $\CO_{\widetilde{\FX}_{2}}$-Modules 
\[
\psi\colon  \om^{1}_{\widetilde{\FX}_{2}/\widetilde{\FY}} \lto \tilde{u}_{*}(p\cdot \CO_{\widetilde{\FX_{1}}}) \overset{\mathbf{p}^0} \cong i_{2*}\CO_{\FX}\]
such that  $\tilde{v}^{\sharp}-\tilde{u}^{\sharp}= \psi \circ \hd$ being $i_{2}\colon \FX \inc \widetilde{\FX}_{2}$ the inclusion. Equivalently by adjunction and, with an abuse of notation, there exists an unique homomorphism $\psi\colon  \om^{1}_{\FX/\FY} \to\CO_{\FX}$ of  $\CO_{\FX}$-Modules such that $\tilde{v}^{\sharp}-\tilde{u}^{\sharp}= \psi \circ \hd$. On the other hand, since $\widetilde{F}_{2} \circ \tilde{u}$ and $\widetilde{F}_{2} \circ \tilde{v}$ are two liftings of $i' \circ F_{\FX/\FY}$ over $\widetilde{\FY}$ by \cite[2.2.(1)]{P} there exists an unique morphism $\eta\colon  F_{\FX/\FY}^{*} \om^{1}_{\FX^{(p)}/\FY} \to  \CO_{\FX}$ of $\CO_{\FX}$-Modules such that $(\widetilde{F}_{2} \circ \tilde{v})^{\sharp}-(\widetilde{F}_{2} \circ \tilde{u})^{\sharp}= \eta \circ \hd'$. By unicity $\eta$ factors as
\[ 
F_{\FX/\FY}^{*}\om^{1}_{\FX^{(p)}/\FY} \xto{\text{can.}}  \om^{1}_{\FX/\FY} \xto{\,\psi\,} \CO_{\FX} 
\]
By \ref{relativonulo}  the canonical morphism $F_{\FX/\FY}^{*} \om^{1}_{\FX^{(p)}/\FY} \to \om^{1}_{\FX/\FY}$ is zero and we conclude  that  $\eta=0$ and, therefore, $\widetilde{F}_{2} \circ \tilde{u}= \widetilde{F}_{2} \circ \tilde{v}$. 

 In general, given an affine open covering $\{\FU_{\alpha}\}$ of $\FX$, for all $\alpha$, \cite[3.3]{P} implies that  there exists  a $\widetilde{\FY}$-isomorphism $\tilde{u}_{\alpha}\colon  \widetilde{\FX}_{1}|_{\FU_{\alpha}} \to \widetilde{\FX}_{2}|_{\FU_{\alpha}}$ that induces the identity on $\FU_{\alpha}$.  Then it suffices to  define for each $\alpha$ \[\phi(\widetilde{F}_{1},\widetilde{F}_{2})|_{\FU_{\alpha}} := \phi(\tilde{u}_{\alpha},\widetilde{F}_{1}|_{\FU_{\alpha}},\widetilde{F}_{2}|_{\FU_{\alpha}})\] 

To check the equalities (\ref{econdicpegadofi11}) and (\ref{econdicpegadofi12}) we may restrict to the affine case. In this case $\widetilde{\FX}_{1}$ and $\widetilde{\FX}_{2}$ are isomorphic (see \cite[3.3]{P}) and to simplify we set $\widetilde{\FX}:= \widetilde{\FX}_{1}=\widetilde{\FX}_{2}$. With notations as in \ref{deffisuper1}, we have that $\widetilde{F}_{i}(\widetilde{a^{(p)}}) = \tilde{a}^{p} + p\cdot c_{i}$ with $c_{i}\in \widetilde{A}$ for $i=1,2$, from where we deduce that 
\[\varphi_{\widetilde{F}_{1}}^{\, 1}-\varphi^{\, 1}_{\widetilde{F}_{2}}= F_{\FX/\FY\,*}\hd \circ \phi(\tilde{u},\widetilde{F}_{1},\widetilde{F}_{2})\]

Last, if we suppose there exists yet another $\widetilde{\FY}$-morphism $\widetilde{F}_{3}\colon \widetilde{\FX}_{3}\to \widetilde{\FX^{(p)}}$ that  lifts  $F$ and that $\tilde{v}\colon \widetilde{\FX}_{2}\to  \widetilde{\FX}_{3}$ is a $\widetilde{\FY}$-isomorphism that induces the identity in $\FX$, the equality  (\ref{econdicpegadofi12}) holds by adding the relations corresponding to the couples $(\widetilde{F}_{1},\widetilde{F}_{2})$ and $(\widetilde{F}_{2},\widetilde{F}_{3})$.
\end{proof}

\begin{prop} \label{prodescomtruncadodos}
There exists a morphism in $\D(\FX^{(p)})$ 
\[ \varphi^{\, 1} \colon \om^{1}_{\FX^{(p)}/\FY} [-1]  \lto  
F_{\FX/\FY\,*}  \om^{\bullet}_{\FX/\FY}
\]
that induces the Cartier isomorphism (\ref{ecisocart}) in $\CH^{1}$. 
\end{prop}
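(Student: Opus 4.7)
The plan is to glue the local Cartier operators $\varphi^{\,1}_{\widetilde{F}_\alpha}$ supplied by Lemma \ref{lemadatospegado} into a single morphism in the derived category, by packaging them into a Čech bicomplex and exploiting that the corresponding augmentation is a quasi-isomorphism.

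First, as in \ref{localglobal}, fix an affine open covering $\CU = \{\FU_\alpha\}$ of $\FX$ and for each index $\alpha$ choose a smooth lifting $\widetilde{\FU}_\alpha$ of $\FU_\alpha$ over $\widetilde{\FY}$ (by \cite[Corollary 4.3]{P}) and a lifting $\widetilde{F}_\alpha \colon \widetilde{\FU}_\alpha \to \widetilde{\FX^{(p)}}$ of $F_{\FX/\FY}|_{\FU_\alpha}$ (by \cite[Corollary 2.5]{AJP1}). Put $\varphi^{\,1}_\alpha := \varphi^{\,1}_{\widetilde{F}_\alpha}$ and, on double overlaps, $\phi_{\alpha\beta} := \phi(\widetilde{F}_\alpha|_{\FU_{\alpha\beta}},\widetilde{F}_\beta|_{\FU_{\alpha\beta}})$. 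Since $F_{\FX/\FY}$ is a homeomorphism (Proposition \ref{propifrobreladicohomeo}), the cover $\CU$ pushes to a cover of $\FX^{(p)}$ (which I still denote $\CU$), and both $\om^{1}_{\FX^{(p)}/\FY}$ and $\CK^\bullet := F_{\FX/\FY\,*}\om^{\bullet}_{\FX/\FY}$ are sheaves on $\FX^{(p)}$. Lemma \ref{lemadatospegado} then says $\varphi^{\,1}_\alpha - \varphi^{\,1}_\beta = F_{\FX/\FY\,*}\hd \circ \phi_{\alpha\beta}$ on $\FU_{\alpha\beta}$ and $\phi_{\alpha\beta} - \phi_{\alpha\gamma} + \phi_{\beta\gamma} = 0$ on triple overlaps.

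Next, consider the Čech bicomplex $\check{\CC}^{p,q} := \check{\CC}^{p}(\CU, \CK^q)$ with the usual Čech differential $\delta$ in the first index and $F_{\FX/\FY\,*}\hd^{\,q}$ in the second; write $\T\check{\CC}^\bullet$ for its total complex. Augmentation induces a quasi-isomorphism $\CK^{\bullet} \liso \T\check{\CC}^\bullet$, so it suffices to produce a morphism of honest complexes
\[
\Phi \colon \om^{1}_{\FX^{(p)}/\FY}[-1] \lto \T\check{\CC}^{\bullet}
\]
and compose with a quasi-inverse. I define $\Phi$ in total degree $1$ by its two components: the Čech $0$-cochain $\alpha \mapsto \varphi^{\,1}_\alpha$ landing in $\check{\CC}^{0,1} = \prod_\alpha \CK^1(\FU_\alpha)$, and the Čech $1$-cochain $(\alpha,\beta) \mapsto \phi_{\alpha\beta}$ landing in $\check{\CC}^{1,0} = \prod_{\alpha,\beta} \CK^0(\FU_{\alpha\beta})$. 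That $\Phi$ commutes with differentials boils down to three verifications in total degree $2$: (a) $F_{\FX/\FY\,*}\hd \circ \varphi^{\,1}_\alpha = 0$, which holds because, as shown in the proof of Lemma \ref{varfiocar}, the image of $\varphi^{\,1}_{\widetilde{F}}$ lies in $\CZ^{1}(F_{\FX/\FY\,*}\om^{\bullet}_{\FX/\FY})$; (b) $\delta(\varphi^{\,1}_\bullet) + F_{\FX/\FY\,*}\hd \circ \phi_{\bullet\bullet} = 0$, which is exactly (\ref{econdicpegadofi11}) with the correct sign; and (c) $\delta(\phi_{\bullet\bullet}) = 0$, which is the cocycle condition (\ref{econdicpegadofi12}).

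Setting $\varphi^{\,1}$ to be the composition of $\Phi$ with the inverse (in $\D(\FX^{(p)})$) of the augmentation quasi-isomorphism finishes the construction. For the statement about $\CH^1$, note that restricting $\varphi^{\,1}$ to $\FU_\alpha$ recovers (up to the Čech quasi-isomorphism) the class of $\varphi^{\,1}_{\widetilde{F}_\alpha}$, which by Lemma \ref{varfiocar} induces the degree-$1$ Cartier isomorphism $\gamma^{1}$ locally; since $\gamma^{1}$ is globally defined and the statement about $\CH^1$ is a local one, $\varphi^{\,1}$ induces $\gamma^{1}$ globally. The main technical obstacle is the careful sign bookkeeping and the need to work with Čech resolutions rather than reducing directly to schemes, since the absence of nice adic truncations $X_\ell$ for non-adic $f$ (highlighted in the Introduction) precludes importing the classical scheme-theoretic Deligne--Illusie gluing verbatim; all topological data behave well, however, because $F_{\FX/\FY}$ is an adic homeomorphism.
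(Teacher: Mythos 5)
Your proposal follows essentially the same approach as the paper: glue the local Cartier operators via the \v{C}ech resolution, using (\ref{econdicpegadofi11}) and (\ref{econdicpegadofi12}) from Lemma \ref{lemadatospegado} to make the collection $(\varphi^{\,1}_{\widetilde{F}_\alpha}, \phi_{\alpha\beta})$ a genuine morphism of complexes into $\chC(\{\FU_{\alpha}\}, F_{\FX/\FY\,*}\om^{\bullet}_{\FX/\FY})$, and then invert the \v{C}ech quasi-isomorphism in $\D(\FX^{(p)})$. The paper additionally verifies that $\varphi^{\,1}$ is independent of the choice of covering and of the Frobenius liftings (by a common-refinement argument), which you omit; that check is not required for the bare existence statement here but is what makes the resulting map canonical.
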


\begin{proof}
Let us fix an affine open covering $\{\FU_{\alpha}\}$ of $\FX$. By \ref{localglobal} there exists a smooth lifting $\widetilde{\FU}_{\alpha}$ of $\FU_{\alpha}$ over $\widetilde{\FY}$ and a lifting $\widetilde{F}_{\alpha}\colon  \widetilde{\FU}_{\alpha} \to \widetilde{\FX^{(p)}}$ of $F_{\FX/\FY}|_{\FU_{\alpha}}\colon  \FU_{\alpha} \to \FX^{(p)} \inc \widetilde{\FX^{(p)}}$ over $\widetilde{\FY}$, that is, such that the following diagram is commutative:
\begin{equation*}
\begin{tikzpicture}[baseline=(current  bounding  box.center)]
\matrix(m)[matrix of math nodes, row sep=2.2em, column sep=4em,
text height=2.5ex, text depth=0.25ex]{
\FU_{\alpha} & \widetilde{\FU}_{\alpha} \\
\FX^{(p)}   &  \\
\widetilde{\FX^{(p)}}         & \widetilde{\FY} \\}; 
\path[->,font=\scriptsize,>=angle 90]
(m-1-1) edge node[left] {$F_{\FX/\FY}|_{\FU_{\alpha}}$} (m-2-1)
(m-1-2) edge node[auto] {$\widetilde{F}_\alpha$} (m-3-1)
        edge (m-3-2)
(m-3-1) edge (m-3-2);
\path[right hook->,font=\scriptsize,>=angle 90]
(m-1-1) edge (m-1-2)
(m-2-1) edge (m-3-1);
\end{tikzpicture}
\end{equation*}
By Lemma \ref{varfiocar} for each $\alpha$ there exists a homomorphism of complexes of $\CO_{\FX^{(p)}}|_{\FU_{\alpha}}$-Modules 
\[
\varphi_{\widetilde{F}_{\alpha}}^{\, 1} \colon
\om^{1}_{\FX^{(p)}/\FY}|_{\FU_{\alpha}}  \lto{} F_{\FX/\FY\,*} \om^{1}_{\FX/\FY}|_{\FU_{\alpha}}
\]
that induces the Cartier  isomorphism in $\CH^{1}$.
By Lema \ref{lemadatospegado} we have that,  for each pair of indexes $\alpha,\, \beta$ such that $\FU_{\alpha \beta}:= \FU_{\alpha} \cap \FU_{\beta} \neq \varnothing$ there exists a homomorphism of $\CO_{\FX^{(p)}}|_{\FU_{\alpha \beta}}$-Modules 
\[\phi_{\alpha \beta} \colon
\om^{1}_{\FX^{(p)} /\FY}|_{\FU_{\alpha \beta}} \lto
 F_{\FX/\FY\,*}\CO_{\FX}|_{\FU_{\alpha \beta}}
\] such that:
\begin{equation} \label{datitos1}
\varphi_{\widetilde{F}_{\alpha}}^{\, 1}|_{\FU_{\alpha \beta}}-\varphi^{\, 1}_{\widetilde{F}_{\beta}}|_{\FU_{\alpha \beta}}= F_{\FX/\FY\,*}\hd \circ \phi_{\alpha \beta}
\end{equation}
and such that, for all $\alpha,\, \beta,\, \delta$ with $\FU_{\alpha \beta \delta}:= \FU_{\alpha} \cap \FU_{\beta} \cap \FU_{\delta} \neq \varnothing$:
\begin{equation} \label{datitos2}
\phi_{\alpha \beta}|_{\FU_{\alpha \beta \delta}} + 
\phi_{ \beta \delta}|_{\FU_{\alpha \beta \delta}} = 
\phi_{\alpha \delta}|_{\FU_{\alpha \beta \delta}}.
\end{equation}

Data (\ref{datitos1}) and (\ref{datitos2}) allow to define a morphism of complexes  
\[
\varphi^{1}_{(\FU_{\alpha},\widetilde{F}_{\alpha})} \colon
\om^{1}_{\FX^{(p)}/\FY} [-1] \lto \chC(\{\FU_{\alpha}\}, F_{\FX/\FY\,*}\om^{\, \bullet}_{\FX/\FY})
\] 
of $\CO_{\FX^{(p)}}$-Modules in degree $1$, equivalently 
\[
\om^{1}_{\FX^{(p)}/\FY} [-1]\xto{(\varphi^1_{1},\varphi^1_{0})} \chC^{1}(\{\FU_{\alpha}\}, F_{\FX/\FY\,*}\CO_{\FX})  \bigoplus \chC^{0}(\{\FU_{\alpha}\}, F_{\FX/\FY\,*}\om^{1}_{\FX/\FY})
\] 
that is locally given by: 
\begin{equation*}
\varphi^1_{1} w _{(\alpha, \beta)} := \phi_{\alpha \beta}(w|_{\FU_{\alpha \beta}}) \qquad 
\varphi^1_{0} w _{(\alpha)} := \varphi^{1}_{\widetilde{F}_{\alpha}}(w|_{\FU_{\alpha}})
\end{equation*}
We define $\varphi^{1}$ as the composition of the morphisms in $\D(\FX^{(p)})$
\[
\om^{1}_{\FX^{(p)}/\FY} [-1]  \xto{\varphi^{1}_{(\FU_{\alpha},\widetilde{F}_{\alpha})}} \chC(\{\FU_{\alpha}\}, F_{\FX/\FY\,*}\om^{\, \bullet}_{\FX/\FY}) \xto{\epsilon^{-1}}  F_{\FX/\FY\,*}  \om^{\bullet}_{\FX/\FY},
\]
where $F_{\FX/\FY\,*}  \om^{\bullet}_{\FX/\FY} \xto{\epsilon}   \chC(\{\FU_{\alpha}\}, F_{\FX/\FY\,*}\om^{\, \bullet}_{\FX/\FY})$ is the  \v{C}ech resolution.
The morphism $\varphi^{1}$ does not  depend of the election of $\{(\FU_{\alpha}, \widetilde{F}_{\alpha})\}$. Indeed, if $\{\FU'_{\beta}\}$ is a refinement of $\{\FU_{\alpha}\}$ it is easy to see that $\varphi^{1}_{(\FU_{\alpha},\widetilde{F}_{\alpha})} = \varphi^{1}_{(\FU'_{\alpha},\widetilde{F}_{\alpha}|_{\FU'_{\alpha}})}$. Then if $\{\FV_{\beta}\}$ is another covering of $\FX$ and for all $\beta$, $\widetilde{G}_{\beta}$ is a  lifting of $F_{\FX/\FY}|_{\FV_{\beta}}$,   is a simple  exercise to check that $\varphi^{1}_{(\FU_{\alpha},\widetilde{F}_{\alpha})} = \varphi^{1}_{(\FU_{\alpha},\widetilde{F}_{\alpha}) \sqcup (\FV_{\beta},\widetilde{G}_{\beta})} =\varphi^{1}_{(\FV_{\beta},\widetilde{G}_{\beta})}$. 

Last, let us see  that $\varphi^{1}$ induces the Cartier isomorphism in $\CH^{1}$. Since it is a  local question, we may suppose that there exists a $\widetilde{\FY}$-morphism $\widetilde{F}\colon  \widetilde{\FX} \to \widetilde{\FX^{(p)}}$ that lifts to $F_{\FX/\FY}$. Then $\varphi^{1}$ is defined  by the morphism $\varphi^{1}_{\widetilde{F}}$ given in \ref{deffisuper1}.
\end{proof}

\begin{cor}
There is a decomposition of $\tau^{\leq 1}(F_{\FX/\FY\,*} \om^{\bullet}_{\FX/\FY})$  in $\D(\FX^{(p)})$.
\end{cor}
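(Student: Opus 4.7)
The plan is to assemble the desired decomposition from two pieces: a degree-$0$ component and a degree-$1$ component, each chosen so as to realize the Cartier isomorphism on cohomology. By definition of decomposability, it suffices to exhibit a morphism
\[
\psi\colon \CO_{\FX^{(p)}} \oplus \om^{1}_{\FX^{(p)}/\FY}[-1] \lto \tau^{\leq 1}(F_{\FX/\FY\,*}\om^{\bullet}_{\FX/\FY})
\]
in $\D(\FX^{(p)})$ that becomes the Cartier isomorphism after applying $\CH^{0}$ and $\CH^{1}$; any such $\psi$ is automatically an isomorphism in $\D(\FX^{(p)})$ since both source and target are concentrated in degrees $0$ and $1$ and $\psi$ is a quasi-isomorphism.

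For the degree-$0$ component I would take the canonical morphism of $\CO_{\FX^{(p)}}$-Algebras $\varphi^{\,0}:= F_{\FX/\FY}^{\sharp}\colon \CO_{\FX^{(p)}} \to F_{\FX/\FY\,*}\CO_{\FX}$; regarded as a map into the complex $\tau^{\leq 1}(F_{\FX/\FY\,*}\om^{\bullet}_{\FX/\FY})$ in degree $0$, it lands in $\CZ^{0}=F_{\FX/\FY\,*}\CO_{\FX}$ and induces $\gamma^{0}$ on $\CH^{0}$ by construction of the Cartier isomorphism in \ref{ImdirfrobcomRhamcoh}. For the degree-$1$ component I would take the morphism $\varphi^{\,1}\colon \om^{1}_{\FX^{(p)}/\FY}[-1] \to F_{\FX/\FY\,*}\om^{\bullet}_{\FX/\FY}$ produced in Proposition \ref{prodescomtruncadodos}, post-composed with the truncation map $F_{\FX/\FY\,*}\om^{\bullet}_{\FX/\FY} \to \tau^{\leq 1}(F_{\FX/\FY\,*}\om^{\bullet}_{\FX/\FY})$; by the statement of that proposition this induces $\gamma^{1}$ on $\CH^{1}$.

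Summing the two contributions gives the desired morphism $\psi$. By construction $\CH^{0}(\psi)$ and $\CH^{1}(\psi)$ agree respectively with the isomorphisms $\gamma^{0}$ and $\gamma^{1}$ of Theorem \ref{teorisocar}, so $\psi$ is a quasi-isomorphism, hence an isomorphism in $\D(\FX^{(p)})$. This provides the required decomposition
\[
\tau^{\leq 1}(F_{\FX/\FY\,*}\om^{\bullet}_{\FX/\FY}) \cong \bigoplus_{i\leq 1} \CH^{i}(F_{\FX/\FY\,*}\om^{\bullet}_{\FX/\FY})[-i].
\]
There is essentially no obstacle here once Proposition \ref{prodescomtruncadodos} is in hand: the real work (gluing local liftings of Frobenius by the \v{C}ech-cocycle data of Lemma \ref{lemadatospegado}) has already been done to produce $\varphi^{\,1}$, so the corollary is a formal consequence.
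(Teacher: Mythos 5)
Your proposal is correct and matches the paper's own one-line proof, which simply observes that $\varphi^{0} = F_{\FX/\FY}^{\sharp}$ together with the $\varphi^{1}$ of Proposition~\ref{prodescomtruncadodos} furnish the decomposition. One small wording point: there is no canonical map $F_{\FX/\FY\,*}\om^{\bullet}_{\FX/\FY} \to \tau^{\leq 1}(F_{\FX/\FY\,*}\om^{\bullet}_{\FX/\FY})$ to ``post-compose'' with; rather, $\varphi^{1}$ factors uniquely through the canonical inclusion $\tau^{\leq 1}(F_{\FX/\FY\,*}\om^{\bullet}_{\FX/\FY}) \to F_{\FX/\FY\,*}\om^{\bullet}_{\FX/\FY}$ because the source $\om^{1}_{\FX^{(p)}/\FY}[-1]$ lies in $\D^{\leq 1}(\FX^{(p)})$ while $\tau^{>1}(F_{\FX/\FY\,*}\om^{\bullet}_{\FX/\FY})$ lies in $\D^{>1}(\FX^{(p)})$, and these are orthogonal in the standard $t$-structure.
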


\begin{proof}
 Indeed, the maps $\varphi^{0} = F_{\FX/\FY}^\sharp$ and $\varphi^{1}$ provide such isomorphism.
\end{proof}

\begin{prop} \label{propdescomtruncadop} 
There is a decomposition  of $\tau^{<p}(F_{\FX/\FY\,*} \om^{\bullet}_{\FX/\FY})$  in $\D(\FX^{(p)})$ extending the previous one.
\end{prop}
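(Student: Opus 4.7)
The plan is to construct, for each $2\le i<p$, a morphism $\varphi^i\colon \omi_{\FX^{(p)}/\FY}[-i]\to F_{\FX/\FY\,*}\om^{\bullet}_{\FX/\FY}$ in $\D(\FX^{(p)})$ by exploiting the supercommutative DG-algebra structure of $F_{\FX/\FY\,*}\om^{\bullet}_{\FX/\FY}$, and then to set $\varphi := \bigoplus_{i<p}\varphi^i$ with $\varphi^0 = F_{\FX/\FY}^{\sharp}$ and $\varphi^1$ the morphism furnished by Proposition \ref{prodescomtruncadodos}. By the characterization of the Cartier isomorphism in \ref{ImdirfrobcomRhamcoh}, this $\varphi$ will be the desired decomposition, provided each $\varphi^i$ induces $\gamma^i$ on cohomology.

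Concretely, since $\om^1_{\FX^{(p)}/\FY}$ is locally free by \ref{prodextlocallibr}, the derived tensor powers $(\om^1_{\FX^{(p)}/\FY}[-1])^{\otimes i}$ are represented by ordinary complexes. I would form the composite
\[
(\om^1_{\FX^{(p)}/\FY}[-1])^{\otimes i} \xto{(\varphi^1)^{\otimes i}} (F_{\FX/\FY\,*}\om^{\bullet}_{\FX/\FY})^{\otimes i} \xto{\ m_i\ } F_{\FX/\FY\,*}\om^{\bullet}_{\FX/\FY},
\]
where $m_i$ denotes iterated exterior multiplication. Because the target is a supercommutative DG algebra, this composite is invariant under the natural $S_i$-action on the source through Koszul signs induced by the shifts $[-1]$, and hence factors through the symmetric power $\operatorname{Sym}^i(\om^1_{\FX^{(p)}/\FY}[-1])$. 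The classical identification $\operatorname{Sym}^i(M[-1]) \cong (\wedge^i M)[-i]$, valid whenever $i!$ is invertible, then produces
\[
\varphi^i\colon \omi_{\FX^{(p)}/\FY}[-i] \lto F_{\FX/\FY\,*}\om^{\bullet}_{\FX/\FY}.
\]
The hypothesis $i<p$ enters exactly to invert $i!$ in $\CO_{\FX^{(p)}}$, which is of characteristic $p$.

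To verify that $\varphi^i$ induces $\gamma^i$ on $\CH^i$, I would invoke the uniqueness clause of \ref{ImdirfrobcomRhamcoh}: $\gamma$ is the unique morphism of graded $\CO_{\FX^{(p)}}$-algebras from $\bigoplus_i \omi_{\FX^{(p)}/\FY}$ to $\bigoplus_i \CH^i(F_{\FX/\FY\,*}\om^{\bullet}_{\FX/\FY})$ extending its prescribed values in degrees $0$ and $1$. By construction $\bigoplus_{i<p} \CH^i(\varphi^i)$ is multiplicative and coincides with $\gamma^0$ and $\gamma^1$ in the relevant degrees (the degree-one case being precisely Proposition \ref{prodescomtruncadodos}), so it must equal $\gamma$ throughout the range $i<p$.

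The main technical obstacle will be making the $S_i$-equivariance argument rigorous: the Koszul signs arising both from the shifts and from the graded commutativity of $\om^{\bullet}_{\FX/\FY}$ must be tracked carefully. Since $\om^1_{\FX^{(p)}/\FY}$ is locally free, however, the derived tensor products are strict and no higher derived-category machinery is required, so the sign bookkeeping reduces to that carried out by Deligne–Illusie in \cite[\S 2]{deill}, which transplants to the formal-scheme setting verbatim.
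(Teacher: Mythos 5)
Your construction is essentially the one in the paper: both start from the $\varphi^1$ of Proposition~\ref{prodescomtruncadodos}, form $(\varphi^1)^{\overset{\mathbb{L}}\otimes i}$ composed with the iterated product $m_i$ on $F_{\FX/\FY\,*}\om^{\bullet}_{\FX/\FY}$, and use that $\om^1_{\FX^{(p)}/\FY}$ and $F_{\FX/\FY\,*}\om^{\bullet}_{\FX/\FY}$ are (complexes of) finite rank locally free $\CO_{\FX^{(p)}}$-Modules to identify derived with ordinary tensor powers, plus invertibility of $i!$ (hence $i<p$) to get a map out of $\omi_{\FX^{(p)}/\FY}[-i]$. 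The only cosmetic difference is that you package the antisymmetrization step via ``$S_i$-invariance, factor through $\operatorname{Sym}^i$, apply d\'ecalage,'' while the paper simply writes down the explicit antisymmetrizer $A=\frac{1}{i!}\sum_{\sigma\in\mathrm{S}_i}\mathrm{sg}(\sigma)\,\sigma$ as a section of the product map $(\om^1_{\FX^{(p)}/\FY})^{\otimes i}[-i]\to\omi_{\FX^{(p)}/\FY}[-i]$; one should note, though, that in $\D(\FX^{(p)})$ the ``factors through the quotient'' step really rests on having the antisymmetrizer idempotent available (so the $\operatorname{Sym}^i$ piece splits off as a direct summand), which is where $i!\in\CO_{\FX^{(p)}}^{\times}$ is genuinely used --- not merely for the d\'ecalage identification as your last paragraph suggests.
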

\begin{proof}

For all $1\le i<p$ we're going to find a morphism in $\D(\FX^{(p)})$
\[
\varphi^{i} \colon \omi_{\FX^{(p)}/\FY}[-i] \lto F_{\FX/\FY\,*} \om^{\, \bullet}_{\FX/\FY}
\] 
that induces the Cartier isomorphism through the functor $\CH^{i}$ . 

For that, given $\varphi^{1}$ the morphism defined in Proposition  \ref{prodescomtruncadodos}, for all $i \ge 1$ 
we consider  the  morphism in $\D(\FX^{(p)})$, 
\[
(\varphi^{1})^{\overset{\mathbb{L}}\otimes i} \colon
(\om^{1}_{\FX^{(p)}/\FY})^{\overset{\mathbb{L}}\otimes i} \lto
(F_{\FX/\FY\,*} \om^{1}_{\FX/\FY})^{\overset{\mathbb{L}}\otimes i}
\]
defined, as usual, by $(\varphi^{1})^{\overset{\mathbb{L}}\otimes i}:=\varphi^{1} \overset{\mathbb{L}}\otimes \cdots\overset{\mathbb{L}}\otimes \varphi^{1}$.

By  \cite[Proposition 2.6.1]{LNS} we have that $\om^{1}_{\FX^{(p)}/\FY}$ is a locally free $\CO_{\FX^{(p)}}$-Module of finite rank, then $(\om^{1}_{\FX^{(p)}/\FY}[-1])^{\overset{\mathbb{L}}\otimes i} \cong (\om^{1}_{\FX^{(p)}/\FY})^{\otimes i} [-i]$ in $\D(\FX^{(p)})$. On the other hand, Corollary \ref{corfrobfinitplan} implies that $F_{\FX/\FY\,*} \om^{\, \bullet}_{\FX/\FY}$ is a complex of locally free $\CO_{\FX^{(p)}}$-Modules of finite rank, from which it follows that, in $\D(\FX^{(p)})$,
$(F_{\FX/\FY\,*} \om^{\, \bullet}_{\FX/\FY})^{\overset{\mathbb{L}}\otimes i} \cong (F_{\FX/\FY\,*} \om^{\, \bullet}_{\FX/\FY})^{\otimes i}.$

Take $1\le i < p$. The  antisymmetrization morphism
\[ 
\begin{array}{ccc}
\omi_{\FX^{(p)}/\FY}[-i]& \xto{\quad A \quad} &(\om^{1}_{\FX^{(p)}/\FY})^{\otimes i} [-i]\\
w_{1} \wedge w_{2} \wedge \cdots \wedge w_{i}& \leadsto & \frac{1}{i!} \sum_{\sigma  \in \mathrm{S}_{i}} \mathrm{sg}(\sigma) w_{\sigma(1)} \otimes w_{\sigma(2)} \otimes \cdots \otimes w_{\sigma(i)}\\
\end{array}
\]
is a section of the product map 
\[
\begin{array}{ccc}
(\om^{1}_{\FX^{(p)}/\FY})^{\otimes i} [-i]& \xto{\quad \text{prod.} \quad} 
& \omi_{\FX^{(p)}/\FY}[-i] \\
w_{1} \otimes w_{2} \otimes \cdots \otimes w_{i} & \leadsto 
& w_{1} \wedge w_{2} \wedge \cdots \wedge w_{i}\\
\end{array}
\]
and, then we define $\varphi^{i}$ as the composition of morphisms  in $\D(\FX^{(p)})$:
\begin{equation*}
\begin{tikzpicture}[baseline=(current  bounding  box.center)]
\matrix(m)[matrix of math nodes, row sep=2.5em, column sep=4em,
text height=2ex, text depth=1.25ex]{
\omi_{\FX^{(p)}/\FY}[-i] & (F_{\FX/\FY\,*} \om^{\, \bullet}_{\FX/\FY})\\  
(\om^{1}_{\FX^{(p)}/\FY})^{\otimes i}[-i] & (F_{\FX/\FY\,*} \om^{\, \bullet}_{\FX/\FY})^{\otimes i}\\  
(\om^{1}_{\FX^{(p)}/\FY})^{\overset{\mathbb{L}}\otimes i}[-i]&(F_{\FX/\FY\,*} \om^{\, \bullet}_{\FX/\FY})^{\overset{\mathbb{L}}\otimes i}\\  
}; 
\path[->,font=\scriptsize,>=angle 90]
(m-1-1) edge         node[left] {$A$} (m-2-1)
        edge[dashed] node[auto] {$\varphi^{i}$} (m-1-2)
(m-2-1) edge node[left] {$\wr$} (m-3-1)
(m-3-1) edge node[auto] {$(\varphi^{1})^{\overset{\mathbb{L}}\otimes i}$} (m-3-2)
(m-3-2) edge node[right] {$\wr$} (m-2-2)
(m-2-2) edge node[right] {prod.} (m-1-2);
\end{tikzpicture}
\end{equation*}
From Proposition \ref{prodescomtruncadodos} and Theorem \ref{teorisocar} we conclude that $\CH^{i} (\varphi^{i}) = \gamma^{i}$, where $\gamma^{i}$ is the Cartier isomorphism in degree $i$, for all $0\le i<p$ and with this we end the proof of Theorem \ref{teoremadedescomposicion}.
\end{proof}

\section{Decomposition at $p$}\label{secatp}

\begin{cosa} \textbf{Some reminders on duality on formal schemes.}

Let us recall the definition of some functors involved in the Torsion Duality for formal schemes \cite[\S 6]{AJL}. Given $\FX \in \sfn$ and  $\CJ$ any Ideal of definition of $\FX$, the functor $\varGamma'_{\FX} \colon \A(\FX) \to \A(\FX)$ is  defined by 
\[
\varGamma'_{\FX}(\CF):= \dirlim{n>0} \shom_{\CO_{\FX}}(\CO_{\FX}/\CJ^{n}, \CF).
\]
It is a left exact functor.  The $\CO_{\FX}$-Modules invariant by $\varGamma'_{\FX}$ are called torsion $\CO_{\FX}$-Modules and we denote by $\D_{\qct}(\FX) \subset \D(\FX)$ the full subcategory of complexes such that the homologies are torsion quasi-coherent sheaves. $\widetilde{\D}_{\qc}(\FX):= \R{}{\varGamma'_{\FX}}^{-1}(\D_{\qct}(\FX))$  \cite[Definition 5.2.9]{AJL} 

 The functor $\R{}\varGamma'_{\FX}$ has a right adjoint, the completion functor denoted by $\LLambda_{\FX} \colon \D(\FX) \to \D(\FX)$.  It is given by $\LLambda_{\FX} := \rshom(\R{}\varGamma'_{\FX}\CO_{\FX}, -)$.
See \cite[5.2.10.(3)]{AJL}. The essential image of $\D_{\qct}(\FX)$ through $\LLambda_{\FX}$ is denoted $\widehat{\D}(\FX)$. Note that $\D^{+}_{\cc}(\FX) \subset \widehat{\D}(\FX)$ \cite[Proposition 6.2.1]{AJL}.

Let $f \colon \FX \to \FY$ be a separated map in $\sfn$.
The functor $\R{}f_{*}\colon \D_{\qct}(\FX) \to \D_{\qct}(\FY) \to\D(\FY)$ has a right adjoint, namely $f^{\times}_{\ts} \colon \D(\FY) \to \D_{\qct}(\FX)$ \cite[Theorem 6.1]{AJL}.

Put $f^{\ush}:= \LLambda_{\FX} f^{\times}_{\ts} \colon \D(\FY)  \to \widetilde{\D}_{\qc}(\FX)$. 
The theory of torsion duality associates to  $f $  an adjunction
 \[
 \Hom_{\FX}(\CG, f^{\ush}\CF) \liso \Hom_{\FY}(\R{}f_*\R{}\varGamma'_{\FX}\CG, \CF)
 \]
 with $\CG \in \widetilde{\D}_{\qc}(\FX)$ and $\CF \in \D(\FY)$ induced by  natural transformation (the counit of the adjunction) 
 \[
 \tau^{\ush} \colon \R{}f_*\R{}\varGamma'_{\FX} f^{\ush} \lto \mathsf{id}
 \]
 by \cite[Corollary 6.1.4.(a)]{AJL}.  
\end{cosa}

\begin{cosa} \textbf{Duality for coherent coefficients in the adic case.}

Assume that $f$ is a proper morphism, therefore adic. The above duality is described on the categories $\D^{+}_{\cc}(\FX)$ and $\D^{+}_{\cc}(\FY)$  as follows (see  \cite[Theorem 8.4]{AJL}).  
 The functor $\R{}f_*\R{}\varGamma'_{\FX}$ takes values in $\D_{\qct}(\FY)$ but we may force it to take image on $\widehat{\D}(\FY)$ by applying the completion functor $\LLambda_{\FY}$.  The functor $\LLambda_{\FY}\R{}f_*\R{}\varGamma'_{\FX} $ has the right adjoint $f^{\ush}$. Since $f$ is adic,  using the fact that
\begin{align*}
\LLambda_{\FY}\R{}f_*\R{}\varGamma'_{\FX} &\cong 
\R{}f_*\LLambda_{\FX}\R{}\varGamma'_{\FX} \tag{by \cite[Corollary 5.2.11.(c)]{AJL}} \\
            &\cong  \R{}f_*\LLambda_{\FX} \tag{by \cite[Remarks  6.3.1.(1)(c)]{AJL}}
\end{align*}
we see that $\LLambda_{\FY}\R{}f_*\R{}\varGamma'_{\FX} $ agrees with $\R{}f_{*}$ on $\D^{+}_{\cc}(\FX)$ because the functor $\LLambda_{\FX}|_{\D^{+}_{\cc}(\FX)}$ is the identity.
 
Moreover, by \cite[Proposition 3.5.1, Proposition 8.3.2]{AJL} $\R{}f_{*}(\D^{+}_{\cc}(\FX))\subset \D^{+}_{\cc}(\FY)$ and $f^{\ush}(\D^{+}_{\cc}(\FY)) \subset \D^{+}_{\cc}(\FX)$. 
Therefore the duality for proper morphism establish that the  functor $f^{\ush}\colon \D^{+}_{\cc}(\FY) \to \D^{+}_{\cc}(\FX)$ is right-adjoint to $\R{}f_{*}\colon \D^{+}_{\cc}(\FX) \to\D^{+}_{\cc}(\FY)$. We denote the counit of the adjunction as
 \[
 \widehat{\tau}^{\ush} \colon \R{}f_* f^{\ush} \lto \mathsf{id}.
 \]
 This map is usually referred to as the \emph{trace} map. If we need to specify the map $f$ we will denote it by $\widehat{\tau}^{\ush}_f = \widehat{\tau}^{\ush}$.
\end{cosa}

\begin{cosa} \textbf{Frobenius and a perfect pairing of differential Modules.}

Let $\FX$ denote a smooth pseudo proper formal scheme over a characteristic $p$ perfect field $k$. Let $\dim(\FX) = n$. As before, put $\FX^{(p)} = \FX \times_{F_{k}} \spec(k)$.

 Recall from \ref{cbasepei} the graded complex of coherent $\CO_{\FX}$-Modules $\om^{\bullet}_{\FX/k}$. As we have already recalled (\ref{prodextlocallibr}), the sheaves $\om^{i}_{\FX/k}$ are locally free for all $i$ and thus we have perfect pairings
 \begin{equation*}
 \om^{\, i}_{\FX/k} \otimes_{\CO_{\FX}} \om^{n-i}_{\FX/k} \lto \om^{\, n}_{\FX/k}
 \end{equation*}
 where $0 \leq i \leq n$. This pairing induces the isomorphism in $\D^{+}_{\cc}(\FX)$:
 \begin{equation}\label{pair}
 \om^{\, i}_{\FX/k} \cong  \rshom_{\FX}(\om^{n-i}_{\FX/k} , \om^{\, n}_{\FX/k})
 \end{equation}
 Let us denote $f \colon \FX \to \spec(k)$ and $f^{(p)} \colon \FX^{(p)} \to \spec(k)$ the structural morphisms, and $F_{\FX/k} \colon \FX \to \FX^{(p)}$ the relative Frobenius.  Notice that $F_{\FX/k}$ is a finite map.  Recall that $f^{(p)}\circ F_{\FX/k} = f$.
We have the following string of isomorphisms in $\D^{+}_{\cc}(\FX^{(p)})$:
\begin{align*}
 F_{\FX/k\,*}\om^{\, i}_{\FX/k} & 
         \cong F_{\FX/k\,*} \rshom_{\FX}(\om^{n-i}_{\FX/k}, \om^{\, n}_{\FX/k})\\
                   & =  F_{\FX/k\,*} \rshom_{\FX}(\om^{n-i}_{\FX/k} , 
         \omega_{\FX/k})\\
                   & \cong  F_{\FX/k\,*} \rshom_{\FX}(\om^{n-i}_{\FX/k} , 
         F_{\FX/k}^{\ush}\omega_{\FX^{(p)}/k})\\
                   & \cong \rshom_{\FX^{(p)}} (F_{\FX/k\,*}\om^{n-i}_{\FX/k},
         \omega_{\FX^{(p)}/k})
\end{align*}
Where the first isomorphism comes from applying the functor $F_{\FX/k\,*}$ to (\ref{pair}). The equality corresponds to the notation $\omega_{\FX/k} := \om^{n}_{\FX/k}$; also, we set $\omega_{\FX^{(p)}/k} := \om^{n}_{\FX^{(p)}/k}$. By \cite[Theorem 5.1.2]{sastry} these sheaves are dualizing in $\widehat{\D}(\FX)$ and $\widehat{\D}(\FX^{(p)})$, in other words, they are identified with $f^{\ush}(\widetilde{k})$ and $f^{(p) \ush}(\widetilde{k})$, respectively. The second isomorphism is induced by the map
\[F_{\FX/k}^{\ush}\omega_{\FX^{(p)}/k} \liso \omega_{\FX/k}\]
 (\cite[Corollary 6.1.4.(b)]{AJL}). The third isomorphism is \cite[Theorem 8.4]{AJL} applied to $F_{\FX/k}$ which is finite (Proposition \ref{frobfinitplan}), therefore proper.
 
Taking homology, we obtain the perfect pairing in $\A(\FX^{(p)})$
\begin{equation}\label{pairom}
 F_{\FX/k\,*}\om^{\, i}_{\FX/k} \otimes_{\CO_{\FX^{(p)}}} F_{\FX/k\,*}\om^{n-i}_{\FX/k} \lto \omega_{\FX^{(p)}/k}
\end{equation}
Notice that the pairing is induced by the trace map $\widehat{\tau}^{\ush}(\omega_{\FX^{(p)}/k})$.
 \end{cosa}

\begin{cosa}
 The graded piece of the Cartier isomorphism is an isomorphism of locally free sheaves
\[
\gamma^n \colon \om^{n}_{\FX^{(p)}/k} \lto \CH^{n} (F_{\FX/k\,*}\om^{\bullet}_{\FX/k})
\]
 There is a natural map $\nu\colon F_{\FX/k\,*}\om^{n}_{\FX/k} \to \CH^{n} (F_{\FX/k\,*}\om^{\bullet}_{\FX/k})$ that composed with the inverse of $\gamma^n$ yields a canonical morphism
 
\begin{equation}\label{cartdiff}
 C \colon F_{\FX/k\,*}\omega_{\FX/k} \lto \omega_{\FX^{(p)}/k}
\end{equation}
In other words, $C = (\gamma^n)^{-1} \circ \nu$.
\end{cosa}

\begin{prop}\label{cart=tr}
 The map $C$ in \eqref{cartdiff} agrees with $\widehat{\tau}^{\ush}(\omega_{\FX^{(p)}/k})$ for the Frobenius map $F_{\FX/k}$ .
\end{prop}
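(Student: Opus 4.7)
\medskip

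\noindent\emph{Proof plan.} The claim asserts the equality of two $\CO_{\FX^{(p)}}$-linear morphisms between the coherent $\CO_{\FX^{(p)}}$-modules $F_{\FX/k\,*}\omega_{\FX/k}$ and $\omega_{\FX^{(p)}/k}$. The problem is therefore local on $\FX^{(p)}$, which I would exploit in the usual way. First I would reduce to a model case. By \cite[Proposition 5.9]{AJP2}, every point of $\FX$ has an open neighbourhood over which $f$ factors as an \'etale map $g\colon \FU\to \AF^{n}_{k}$ followed by the canonical projection. Both sides of the desired equality are compatible with this reduction: the Cartier operator is natural with respect to \'etale base change by the construction of the Cartier isomorphism (Theorem \ref{teorisocar}), while the trace $\widehat{\tau}^{\ush}$ is compatible with \'etale base change via the base change machinery of torsion duality in \cite[\S7]{AJL} applied to the Cartesian square coming from Lemma \ref{lemetfrobresmooth}. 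Thus, it suffices to treat the model case $\FX=\AF^{n}_{k}$ with coordinates $t_{1},\ldots,t_{n}$ and $\FX^{(p)}=\AF^{n}_{k}$ with coordinates $t'_{i}=t_{i}\tc 1$.

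In this local model the $\CO_{\FX^{(p)}}$-module $F_{\FX/k\,*}\omega_{\FX/k}$ is free with basis $\{t^{\mu}\,dt_{1}\wedge\cdots\wedge dt_{n}\}_{0\le \mu_{i}\le p-1}$ (Proposition \ref{frobfinitplan} together with \ref{frobespacafin}.(\ref{frobrelproyecfinitflat})), so both $C$ and $\widehat{\tau}^{\ush}$ are determined by their values on this basis. For $C$, the ingredients in \ref{ImdirfrobcomRhamcoh} give $\gamma^{n}(dt'_{1}\wedge\cdots\wedge dt'_{n})=[(t_{1}\cdots t_{n})^{p-1}dt_{1}\wedge\cdots\wedge dt_{n}]$, and a direct calculation shows that $t^{\mu}dt_{1}\wedge\cdots\wedge dt_{n}$ is exact in $F_{\FX/k\,*}\om^{\bullet}_{\FX/k}$ whenever some $\mu_{j}\neq p-1$: writing $\mu=(\mu_{1},\ldots,\mu_{n})$, if $\mu_{j}+1\not\equiv 0\pmod{p}$ then
\[
t^{\mu}\,dt_{1}\wedge\cdots\wedge dt_{n} \;=\; \frac{(-1)^{j-1}}{\mu_{j}+1}\,\hd\bigl(t^{\mu+e_{j}}\,dt_{1}\wedge\cdots\widehat{dt_{j}}\cdots\wedge dt_{n}\bigr).
\]
Consequently $C$ sends $(t_{1}\cdots t_{n})^{p-1}dt_{1}\wedge\cdots\wedge dt_{n}$ to $dt'_{1}\wedge\cdots\wedge dt'_{n}$ and all other basis elements to $0$.

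It then remains to verify that $\widehat{\tau}^{\ush}$ has the same effect on the basis. By the adjunction defining torsion duality, $\widehat{\tau}^{\ush}(\omega_{\FX^{(p)}/k})$ is the unique $\CO_{\FX^{(p)}}$-linear map whose adjoint in $\Hom_{\FX}(\omega_{\FX/k},F_{\FX/k}^{\ush}\omega_{\FX^{(p)}/k})$ is the canonical isomorphism $\omega_{\FX/k}\liso F_{\FX/k}^{\ush}\omega_{\FX^{(p)}/k}$ of \cite[Corollary 6.1.4.(b)]{AJL}. For the finite flat morphism $F_{\FX/k}$ the functor $F_{\FX/k}^{\ush}$ on $\D_{\cc}^{+}$ is $\shom_{\CO_{\FX^{(p)}}}(F_{\FX/k\,*}\CO_{\FX},-)$, and I would use Sastry's identification of the dualizing sheaf \cite[Theorem 5.1.2]{sastry} to show explicitly that, in the local coordinates above, the canonical isomorphism sends $dt_{1}\wedge\cdots\wedge dt_{n}$ to the dual-basis functional that maps $(t_{1}\cdots t_{n})^{p-1}$ to $dt'_{1}\wedge\cdots\wedge dt'_{n}$ and the remaining $t^{\mu}$ to $0$. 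Transporting this identity across the adjunction gives the same formula for $\widehat{\tau}^{\ush}$ on our basis, completing the proof. The main obstacle is precisely this last calculation: pinning down the canonical isomorphism $F_{\FX/k}^{\ush}\omega_{\FX^{(p)}/k}\cong\omega_{\FX/k}$ explicitly in coordinates, since its construction in \cite{AJL} is indirect; here one must carefully unwind Sastry's construction in order to recognise the dual-basis functional above.
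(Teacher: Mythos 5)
Your approach is genuinely different from the paper's, and it contains a real gap that you yourself flag at the end. The paper's proof never reduces to the affine-space model and never tries to make the isomorphism $F_{\FX/k}^{\ush}\omega_{\FX^{(p)}/k}\cong\omega_{\FX/k}$ explicit in coordinates. Instead it passes to local cohomology at a point $x\in\FX$ (and $\bar x\in\FX^{(p)}$), uses the pseudo-functoriality $\widehat{\tau}^{\ush}_{f}=\widehat{\tau}^{\ush}_{f^{(p)}}\circ\R f^{(p)}_*(\widehat{\tau}^{\ush}_{F})$ applied to $\widetilde{k}$ to identify the diagonal arrows with residue maps $\res_x$, $\res_{\bar x}$, and then quotes Lipman's computation \cite[(7.3.6)]{blue} to conclude that $\CH^n_x(\widehat{\tau}^{\ush}_{F})=\CH^n_x(\gamma^n)$ (Lipman's $C_x^{-1}$). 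The only thing the paper has to verify anew is that this local-cohomological comparison transfers to the formal setting, which it does by the observation that local cohomology depends only on the completion of the stalks.

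Your plan, by contrast, pins both $C$ and $\widehat{\tau}^{\ush}$ down as $\CO_{\FX^{(p)}}$-linear maps on the monomial basis of $F_{\FX/k\,*}\omega_{\FX/k}$. The computation of $C$ on that basis is correct, including the exactness argument killing $t^{\mu}\,dt_1\wedge\cdots\wedge dt_n$ when some $\mu_j\ne p-1$. But the other half --- that the canonical isomorphism $\omega_{\FX/k}\liso F_{\FX/k}^{\ush}\omega_{\FX^{(p)}/k}$ carries $dt_1\wedge\cdots\wedge dt_n$ to precisely the dual-basis functional you describe --- is asserted, not proved, and is exactly the delicate content of the statement. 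The pseudo-functoriality isomorphism from \cite[Corollary 6.1.4.(b)]{AJL} is not constructed by a formula one can simply read off in coordinates, and Sastry's theorem identifies $\omega_{\FX/k}$ with $f^{\ush}\widetilde{k}$ as an abstract dualizing object rather than by exhibiting a generator. Making this explicit requires, in effect, redoing a residue computation equivalent to the one the paper cites from Lipman. In addition, the claimed compatibility of the trace with étale base change (used to justify the reduction to $\AF^n_k$) is invoked but not sourced precisely, and it must be checked together with the fact that $\AF^n_k$ is no longer pseudo-proper over $k$, so $\omega_{\AF^n_k/k}$ can no longer be described as $f^{\ush}\widetilde{k}$; only the finite-map trace for $F_{\AF^n_k/k}$ survives the reduction, which is enough but should be said. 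So your route is coherent in outline, but the proof as written stops at the point where the real work begins; the paper's residue-theoretic strategy is precisely designed to avoid having to make that canonical isomorphism explicit.
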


\begin{proof}
This comes down to a local computation. Let $x \in \FX$ and $\bar{x} \in \FX^{(p)}$ the corresponding point by the bijection of underlying spaces. Denote by $\widehat{\tau}^{\ush}_{F}$ the map $\widehat{\tau}^{\ush}_{F_{\FX/k}}(\omega_{\FX^{(p)}/k})$. We have the following commutative diagram
\begin{equation*}
\begin{tikzpicture}[baseline=(current  bounding  box.center)]
\matrix(m)[matrix of math nodes, row sep=2.6em, column sep=4.5em,
text height=1.5ex, text depth=0.25ex]{
\R^n f^{(p)}_*\omega_{\FX^{(p)}/k}    & \R^n f_* \omega_{\FX/k} & \widetilde{k} \\
\CH^n_{\bar{x}}(\omega_{\FX^{(p)}/k}) & \CH^n_x(\omega_{\FX/k})         \\}; 
\path[->,font=\scriptsize,>=angle 90]
(m-1-1) edge node[auto] {via $\widehat{\tau}^{\ush}_{F}$} (m-1-2)
(m-1-2) edge node[auto] {$\widehat{\tau}^{\ush}_{f}(\widetilde{k})$} (m-1-3)
(m-2-1) edge node[left] {can} (m-1-1)
        edge node[auto] {$\CH^n_x(\widehat{\tau}^{\ush}_{F})$} (m-2-2)
(m-2-2) edge node[right] {\quad $\res_x$} (m-1-3)
        edge node[auto] {can} (m-1-2);
\end{tikzpicture}
\end{equation*}
with $\CH^n_x$ denoting local cohomology at $x$ and similarly $\CH^n_{\bar{x}}$. The square commutes by functoriality and the triangle defines the map $\res_x$. By pseudo-functoriality the horizontal composition is $\widehat{\tau}^{\ush}_{f^{(p)}}(\widetilde{k})$ .

As a consequence, the lower composition is $\res_{\bar{x}}$. Using the computation in \cite[(7.3.6)]{blue} it follows that $\CH^n_x(\widehat{\tau}^{\ush}_{F}) = \CH^n_x(\gamma^n)$. It holds also in our setting because local cohomology only depends on the completion of the corresponding stalks of the structure sheaves. Notice that in \textit{loc.~cit.}\ $\CH^n_x(\gamma^n)$ is denoted $C_x^{-1}$. The claim follows now by the local description of $\gamma^n$. 
\end{proof}

\begin{rem}
For another take on the relationship between the duality trace and the Cartier map $C$, see
\cite[\S 1]{M}. For an explicit computation of the trace in the case of usual schemes and the absolute Frobenius, see \cite[Theorem 3.2.1]{bs}.
\end{rem}

\begin{thm}[Decomposition at $p$] \label{atp}
Let $\FX$ be a smooth pseudo proper locally noetherian formal scheme over a perfect field $k$ of characteristic $p$ such that $\dim (\FX) \leq p$ and that admits a smooth lifting over $W_2(k)$. Then, the complex $F_{\FX/k\,*} \om^{\bullet}_{\FX/k}$ is decomposable in $\D(\FX^{(p)})$.
\end{thm}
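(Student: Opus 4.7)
First I would reduce to the case $n := \dim(\FX) = p$: if $n < p$, then $\om^{i}_{\FX/k} = 0$ for $i > n$ by \ref{prodextlocallibr}, so $F_{\FX/k\,*}\om^{\bullet}_{\FX/k}$ coincides with its truncation $\tau^{<p}$, and the result follows at once from Theorem \ref{teoremadedescomposicion}. Assume henceforth $n = p$. Applying Theorem \ref{teoremadedescomposicion} yields a morphism
\[
\Phi\colon \bigoplus_{i=0}^{p-1}\om^{i}_{\FX^{(p)}/k}[-i] \lto K := F_{\FX/k\,*}\om^{\bullet}_{\FX/k}
\]
in $\D(\FX^{(p)})$ which is a quasi-isomorphism onto $\tau^{<p}(K)$ and induces the Cartier isomorphism $\gamma^{i}$ on $\CH^{i}$ for $i<p$. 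To upgrade $\Phi$ to a decomposition of all of $K$, it suffices to produce a further morphism $\varphi^{p}\colon \omega_{\FX^{(p)}/k}[-p] \to K$ inducing $\gamma^{p}$ on $\CH^{p}$; this amounts to splitting the triangle
\[
\tau^{<p}(K) \lto K \lto \omega_{\FX^{(p)}/k}[-p] \lto +1,
\]
\ie showing the connecting class $\delta \in \Hom_{\D(\FX^{(p)})}(\omega_{\FX^{(p)}/k}[-p],\tau^{<p}(K)[1])$ vanishes.

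The construction of $\varphi^{p}$ proceeds by duality. Write $\mathbb{D} := \rshom_{\FX^{(p)}}(-,\omega_{\FX^{(p)}/k})$. The perfect pairing (\ref{pairom}) furnishes a self-duality $\mathbb{D}(K) \cong K[p]$ in $\D^{+}_{\cc}(\FX^{(p)})$, and the wedge pairing gives $\mathbb{D}(\om^{i}[-i]) \cong \om^{p-i}[i]$. Dualizing $\Phi$ and shifting by $[-p]$ thus produces a morphism
\[
\Psi \colon K \lto \bigoplus_{j=1}^{p}\om^{j}_{\FX^{(p)}/k}[-j]
\]
which is a quasi-isomorphism onto $\tau^{\geq 1}(K)$. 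Its degree-$p$ component is the $\mathbb{D}$-dual of $\varphi^{0} = F^{\sharp}_{\FX/k}$, and by Proposition \ref{cart=tr} it coincides with the Cartier operator $C\colon F_{\FX/k\,*}\omega_{\FX/k} \to \omega_{\FX^{(p)}/k}$, hence induces $(\gamma^{p})^{-1}$ on $\CH^{p}$.

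Finally, I would assemble $\varphi^{p}$ by combining $\Phi$ and $\Psi$. Under the adjunction isomorphism $\Hom(\omega_{\FX^{(p)}/k}[-p],K) \cong \Hom(K,\CO_{\FX^{(p)}})$ coming from the self-duality of the pairing (\ref{pairom}), producing $\varphi^{p}$ with $\CH^{p}(\varphi^{p}) = \gamma^{p}$ is equivalent to producing a morphism $\psi^{0}\colon K \to \CO_{\FX^{(p)}}$ in $\D(\FX^{(p)})$ with $\CH^{0}(\psi^{0}) = \id$, \ie a retraction (in the derived category) of $F^{\sharp}_{\FX/k}$. The morphisms $\Phi$ and $\Psi$ are mutually $\mathbb{D}$-dual and agree on the overlap $\tau^{[1,p-1]}(K)$ via the Cartier isomorphisms on cohomology, which is exactly the compatibility that should force the required splitting to exist. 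I expect the main obstacle to be the rigorous verification that $\delta = 0$, equivalently that the $\varphi^{p}$ (or $\psi^{0}$) built from $\Phi$ and $\Psi$ has the correct cohomological behavior; this will rest essentially on Proposition \ref{cart=tr}, which identifies the duality trace $\widehat{\tau}^{\ush}$ with the Cartier operator and thereby guarantees that the duality-produced $\Psi$ is coherently compatible with the Decomposition-Theorem $\Phi$.
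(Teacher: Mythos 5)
Your overall strategy matches the paper's: reduce to $\dim(\FX)=p$, use Theorem~\ref{teoremadedescomposicion} (Corollary~\ref{cor1descomposicion}) to decompose $\tau^{<p}(K)$ where $K = F_{\FX/k\,*}\om^{\bullet}_{\FX/k}$, and then invoke the duality furnished by~\eqref{pairom} together with Proposition~\ref{cart=tr} to dualize this into a decomposition of $\tau^{\geq 1}(K)$. That much is correct and is exactly what the paper does. However, your final step contains a genuine gap. You assert that the mutual $\mathbb{D}$-duality of $\Phi$ and $\Psi$ and their agreement on $\tau^{[1,p-1]}(K)$ ``should force the required splitting to exist,'' but this is not so. Splitting the triangle
\[
\tau^{<p}(K) \lto K \lto \CH^p(K)[-p] \overset{+1}{\lto}
\]
is equivalent to the vanishing of the connecting class $e\in\Hom_{\D(\FX^{(p)})}\bigl(\CH^p(K)[-p],\tau^{<p}(K)[1]\bigr)$, and after identifying $\tau^{<p}(K)$ with $\bigoplus_{i<p}\CH^i(K)[-i]$ via $\Phi$ the components of $e$ live in $e_i\in\h^{p-i+1}\bigl(\FX^{(p)},\shom(\CH^p,\CH^i)\bigr)$. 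The decomposition of $\tau^{\geq 1}(K)$ (your $\Psi$) kills $e_i$ only for $1\leq i\leq p-1$; the remaining obstruction $e_0\in\h^{p+1}\bigl(\FX^{(p)},\shom(\CH^p,\CH^0)\bigr)$ is not controlled by $\Phi$, by $\Psi$, or by any compatibility between them. (Already for a two-term complex concentrated in degrees $0$ and $1$ the truncations $\tau^{\leq 0}$ and $\tau^{\geq 1}$ are trivially split, yet the extension is in general nonzero.) Equivalently, the morphism $\psi^0\colon K \to \CO_{\FX^{(p)}}$ you want is the degree-$0$ component missing from $\Psi$, and no amount of rearranging $\Phi$ and $\Psi$ produces it.

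The missing ingredient, and the one the paper supplies at this point, is a cohomological dimension bound: $\dimtop(\FX^{(p)})=\dimtop(\FX)\leq\dim(\FX)=p$, hence $\h^{p+1}(\FX^{(p)},-)=0$ on sheaves by Grothendieck vanishing on noetherian topological spaces, so $e_0=0$. This is precisely where the hypothesis $\dim(\FX)\leq p$ does its work in the degree-$p$ case, and it is the step your proposal does not identify.
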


\begin{proof}
 We have to show that there is an isomorphism in $\D(\FX^{(p)})$
 \[
\bigoplus_{i \in \ZZ} \om^{i}_{\FX^{(p)}/k}[-i] \liso
 F_{\FX/k\,*} \om^{\bullet}_{\FX/k}.
 \]
We may assume $\FX$ connected. If $\dim (\FX) < p$ then the statement follows from Corollary \ref{cor1descomposicion}. 

Let us assume from now on that $\dim (\FX) = p$, in other words, $n = p$. By Corollary \ref{cor1descomposicion}, the complex $\tau^{<p}(F_{\FX/k\,*} \om^{\bullet}_{\FX/k})$ is decomposed in $\D(\FX^{(p)})$. We have a distinguished triangle
\begin{equation}\label{tritrun}
 \tau^{<p}(F_{\FX/k\,*} \om^{\bullet}_{\FX/k}) \lto 
F_{\FX/k\,*} \om^{\bullet}_{\FX/k} \lto 
 \CH^p(F_{\FX/k\,*} \om^{\bullet}_{\FX/k})[-p] \overset{+}{\lto}
\end{equation}
As $ \tau^{<p}(F_{\FX/k\,*} \om^{\bullet}_{\FX/\FY})$ is decomposed, we only need to check that the morphism
\[
e \colon \CH^p(F_{\FX/k\,*} \om^{\bullet}_{\FX/k})[-p] \lto (\oplus_{i < p}\CH^i(F_{\FX/k\,*} \om^{\bullet}_{\FX/k})[-i])[1]
\]
is zero. Denote by $e_i$ the components of $e$. They satisfy the following
\[
e_i \in \Hom(\CH^p[-p], \CH^i[-i+1]) = \h^{p-i+1}(\FX^{(p)},\shom(\CH^p,\CH^i))
\]
with $\CH^i := \CH^i(F_{\FX/k\,*} \om^{\bullet}_{\FX/k})$. Applying $\tau^{\geq 1}$ to the triangle \eqref{tritrun} we obtain 
\begin{equation*}
 \tau^{\geq 1}\tau^{<p}(F_{\FX/k\,*} \om^{\bullet}_{\FX/k}) \lto 
 \tau^{\geq 1}(F_{\FX/k\,*} \om^{\bullet}_{\FX/k}) \lto 
 \CH^p(F_{\FX/k\,*} \om^{\bullet}_{\FX/k})[-p] \overset{+}{\lto}
\end{equation*}
By Proposition \ref {cart=tr} the pairing \eqref{pairom} induces an isomorphism 
\[
\rshom_{\FX^{(p)}}(F_{\FX/k\,*}\om^{p-i}_{\FX/k}, \omega_{\FX^{(p)}/k}) \cong F_{\FX/k\,*}\om^{\, i}_{\FX/k}.
\]
Using this, we see that $\tau^{\geq 1}(F_{\FX/k\,*} \om^{\bullet}_{\FX/k})$ is decomposed. Then $e_i = 0$ for all $i \neq 0$. Finally, $e_0 \in \h^{p+1}(\FX^{(p)},\shom(\CH^p,\CH^i)) = 0$ because $\dimtop(\FX^{(p)}) = \dimtop(\FX) \leq \dim(\FX) = p$.
\end{proof}

\end{document}